\numberwithin{equation}{section}
\numberwithin{figure}{section}
\theoremstyle{plain}
\newtheorem{thm}{\protect\theoremname}
  \theoremstyle{plain}
  \newtheorem{lem}[thm]{\protect\lemmaname}
\theoremstyle{plain}
\newtheorem{cor}[thm]{\protect\corollaryname}
\theoremstyle{plain}
\newtheorem{prop}[thm]{\protect\propositionname}
\theoremstyle{plain}
\newtheorem{remark}[thm]{\protect\remarkname}
\providecommand{\lemmaname}{Lemma}
\providecommand{\theoremname}{Theorem}
\providecommand{\corollaryname}{Corollary}
\providecommand{\propositionname}{Proposition}
\providecommand{\remarkname}{Remark}
\DeclareSymbolFont{matha}{OML}{txmi}{m}{it}
\newsavebox\myboxA
\newsavebox\myboxB
\newlength\mylenA
\newcommand*\mybar[2][0.75]{%
    \sbox{\myboxA}{$\m@th#2$}%
    \setbox\myboxB\null
    \ht\myboxB=\ht\myboxA%
    \dp\myboxB=\dp\myboxA%
    \wd\myboxB=#1\wd\myboxA
    \sbox\myboxB{$\m@th\overline{\copy\myboxB}$}
    \setlength\mylenA{\the\wd\myboxA}
    \addtolength\mylenA{-\the\wd\myboxB}%
    \ifdim\wd\myboxB<\wd\myboxA%
       \rlap{\hskip 0.5\mylenA\usebox\myboxB}{\usebox\myboxA}%
    \else
        \hskip -0.5\mylenA\rlap{\usebox\myboxA}{\hskip 0.5\mylenA\usebox\myboxB}%
    \fi}
\newcommand\restr[2]{{
  \left.\kern-\nulldelimiterspace 
  #1 
  \vphantom{\big|} 
  \right|_{#2} 
  }}
\newcommand{\oPsi}{\mybar{\Psi}}
\newcommand{\nN}{\mathcal{N}_{q,1}}
\newcommand{\oN}{\mathcal{N}_{0,1}}
\newcommand{\bN}{\mybar{\mathcal{N}}}
\newcommand{\nTAP}{\operatorname{TAP}}
\newcommand{\e}{\mathbb{E}}
\newcommand{\p}{\mathbb{P}}
\newcommand{\Reals}{\mathbb{R}}
\global\long\def\bs{\boldsymbol{\sigma}}
\global\long\def\tbs{\tilde{\boldsymbol{\sigma}}}
\global\long\def\e{\mathbb{E}}
\global\long\def\p{\mathbb{P}}
\global\long\def\supp{{\operatorname{supp}}}
\global\long\def\PP{\mathcal{P}}
\newcommand{\eps}{{\varepsilon}}
\newcommand{\TAP}{\operatorname{TAP}}
\newcommand{\EA}{\operatorname{EA}}
\newcommand{\qea}{q_{\mbox{\rm \tiny EA}}}
\begin{document}

	\title[TAP free energy II]{The generalized TAP free energy II}
		\author{Wei-Kuo Chen}\thanks{School of Mathematics, University of Minnesota. Email: wkchen@umn.edu. Partially supported by NSF grant DMS-17-52184.}
			\author{Dmitry Panchenko}\thanks{Department of Mathematics. University of Toronto. Email: panchenk@math.toronto.edu. Partially supported by NSERC}
			\author{Eliran Subag}\thanks{Courant Institute. Email: esubag@cims.nyu.edu. Supported by the Simons Foundation.}
		
		\begin{abstract}	
			In a recent paper \cite{GenTAP}, we developed the generalized TAP approach  for mixed $p$-spin models with Ising spins at positive temperature. Here we extend these results in two directions. We find a simplified representation for the energy of the generalized TAP states in terms of the Parisi measure of the model and, in particular, show that the energy of all states at a given distance from the origin is the same. Furthermore, we prove the analogues of the positive temperature results at zero temperature, which concern the ground-state energy and the organization of ground-state configurations in space. 	
		\end{abstract}
		
	\maketitle			

\section{Introduction and main results} 

The TAP approach, named after Thouless, Anderson and Palmer, was originally introduced in \cite{TAP}, where their famous equations for the magnetization and representation for the free energy of the SK model were derived. 
In a recent paper \cite{GenTAP}, adopting ideas from \cite{SubagFEL}, we defined the generalized TAP free energy using a geometric approach for mixed $p$-spin models with Ising spins, at any positive temperature. Our first goal here will be to compute the energy of all generalized TAP states in terms of their distance to the origin. The main focus, however, will be on the zero temperature analogue of the analysis in \cite{GenTAP}. Of course, as the temperature tends to zero the Gibbs measure concentrates on near maximal energies,  hence this analysis deals with the ground state energy and configurations. In particular, the corresponding TAP representation at zero temperature expresses the ground state energy, and the location and structure of TAP states contain information about the organization of ground state configurations in space.

The first rigorous mathematical results concerning the TAP approach were derived by Talagrand \cite{Talbook1} who established the TAP equations for the SK model at high temperature; see also the works of Chatterjee \cite{chatt10} and Bolthausen \cite{EB,EB2}. Much more recently, an analogue of the TAP equations within pure states was proved for generic mixed $p$-spin models at low temperature by Auffinger and Jagannath \cite{AJ16}. Moreover, in \cite{CPTAP17}, the TAP representation for the free energy was  proved for general mixed models by the first two authors. In the setting of the spherical models, the representation for the free energy was proved for the $2$-spin model by Belius and Kistler \cite{BelKist}, and at very low temperature, for the $p$-spin model with $p\geq 3$ by the third author \cite{Subag17} and for mixed models close to pure by Ben Arous, Zeitouni and the third author \cite{BSZ}.

In all of those works, the analysis was done at the level of pure states. As the temperature tends to zero, they degenerate to a single point and the TAP correction converges to zero, leaving only the energy term in the representation for the free energy. As a result, the TAP approach at the level of pure states trivializes at zero temperature. 
In \cite{GenTAP,SubagFEL} the generalized TAP free energy was defined based on geometric principles, inspired by structural properties of the Gibbs measure,
consequent to the famous ultrametricity property \cite{M1,M2,MPV} proved by the second author in \cite{ultrametricity} (see also \cite{SKmodel}). In contrast to the above, in addition to the pure states, this approach also treats ancestral states and generalizes to zero temperature in a  natural way, as we shall see below.

\subsection{Previous results at positive temperature}
Let us introduce the model and recall the results from our previous paper \cite{GenTAP}. Since these results will be used to pass to the zero temperature limit, here we will also introduce an inverse temperature parameter $\beta>0$. The pure $p$-spin Hamiltonian indexed by $\bs\in \Sigma_N:=\{-1,1\}^N$ is defined by
		\begin{align}
		\label{hamp}
		H_{N,p}(\bs)=\frac{1}{N^{(p-1)/2}}\sum_{i_1,\ldots,i_p=1}^{N}g_{i_1,\ldots,i_p}\sigma_{i_1}\cdots\sigma_{i_p},
		\end{align}
where $g_{i_1,\ldots,i_p}$ are i.i.d. standard Gaussian random variables. Given a sequence $(\beta_p)_{p\geq 1}$ that decreases fast enough, for example, $\sum_{p\geq 1}2^p\beta_p^2<\infty,$
		\begin{align}
		\label{hamx}
		H_N(\bs)=\sum_{p\geq 1}\beta_p H_{N,p}(\bs)
		\end{align}
is called a mixed $p$-spin Hamiltonian. Here the processes $H_{N,p}$ are independent of each other for $p\geq 1.$ The covariance of the Gaussian process $H_N(\bs)$ equals
		\begin{align}
		\e H_N(\bs^1)H_N(\bs^2)=N\xi\bigl(R(\bs^1,\bs^2)\bigr),
		\end{align} 
		where $R(\bs^1,\bs^2)=\frac{1}{N}\sum_{i=1}^N\sigma_i^1\sigma_i^2$ is called the overlap of $\bs^1$ and $\bs^2$, and where
		\begin{align}
		\xi(s)=\sum_{p\geq 1}\beta_p^2s^p.
		\end{align} 
Let us recall the Parisi formula \cite{Parisi79, Parisi80} for the free energy
	\begin{align}
		F_N(\beta)&=\frac{1}{\beta N}\log \sum_{\bs\in \Sigma_N}e^{\beta H_N(\bs)}.
	\end{align}
	If $\mathcal{M}_{0,1}$ is the space of probability measures on $[0,1]$, for $\zeta\in \mathcal{M}_{0,1},$ let $\Phi_{\zeta}^\beta(t,x)$ be the solution on $[0,1]\times \Reals$ of the Parisi PDE
	\begin{equation}
	\partial_t \Phi_{\zeta}^\beta = -\frac{\beta^2\xi''(t)}{2}\Bigl(
	\partial_{xx} \Phi_{\zeta}^\beta + \zeta(t)\bigl(\partial_x \Phi_{\zeta}^\beta\bigr)^2
	\Bigr)
	\label{ParisiPDEOrig}
	\end{equation}
with the boundary condition $\Phi_{\zeta}(1,x)=\log 2\cosh x,$ where $\zeta(t):=\zeta([0,t]).$ Let
	\begin{equation}
	\PP_\beta(\zeta):= \Phi_{\zeta}^\beta(0,0)-\frac{\beta^2}{2}\int_0^{1}\!s\xi''(s)\zeta(s)\,ds.
	\end{equation}
	Then, the limit of the free energy is given by the Parisi formula \cite{Parisi79, Parisi80},
	\begin{equation}
	\lim_{N\to\infty}\e F_N(\beta) = 
	\frac{1}{\beta}\inf_{\zeta\in \mathcal{M}_{0,1}}\PP_\beta(\zeta)
	=\frac{1}{\beta}\PP_\beta(\zeta_{\beta}^*),
	\label{eqParisiOrig}
	\end{equation}
	which was first proved by Talagrand in \cite{Tal03} (building on a breakthrough by Guerra \cite{Guerra}), and later generalized to models with odd spin interactions in \cite{Pan00}.
	The minimizer $\zeta_{\beta}^*$ is unique \cite{AC15} (see also \cite{JT16}) and is called the {Parisi measure}.

Next, we recall the generalized TAP free energy at inverse-temperature $\beta>0$. For $m\in [-1,1]^N$ and $\eps>0,$ let us consider a {narrow band} of configurations $\bs\in\Sigma_N$ close to the hyperplane perpendicular to $m,$ 
\begin{equation}
B(m,\eps)=\Big\{\bs\in\Sigma_N:\,|R(\bs,m)-R(m,m)|=\frac{1}{N}|m\cdot (\bs-m)|<\eps \Big\}.
\end{equation}
Given $\delta>0$ and $n\geq 1,$ let us consider a set consisting of $n$ configurations in this narrow band $\bs^{1},\ldots,\bs^{n}\in B(m,\eps)$ such that all $\tbs^i = \bs^i-m$ are almost orthogonal to each other,
\begin{equation}
\label{eq:Bn}
B_n(m,\eps,\delta)
= \Big\{ (\bs^{1},\ldots,\bs^{n})\in B(m,\eps)^n:\,\forall i\neq j,\,\,\big|R({\bs}^{i},{\bs}^{j})-R(m,m)\big|<\delta\Big\}.
\end{equation}
For real numbers  $\eps,\delta>0$ and  an integer number $n\geq1$, let
\begin{equation}
\label{eq:TAPnbeta}
\nTAP^\beta_{N,n}(m,\eps,\delta) :=\frac{1}{nN}\log\sum_{B_{n}(m,\eps,\delta)}e^{\beta\sum_{i=1}^{n}\big[H_{N}(\bs^{i})-H_N(m)\big]}.
\end{equation} 
The motivation for this functional was given in \cite{GenTAP}, so we will not repeat it here.

We will denote the concave conjugate of the Parisi functional $\Phi_\zeta^\beta(q,x)$ defined in (\ref{ParisiPDEOrig}) by
\begin{equation}
\Lambda_\zeta^\beta(q,a):=\inf_{x\in\Reals}\Bigl(\Phi_\zeta^\beta(q,x)-ax\Bigr), \,\, a\in [-1,1].
\label{eqCCPar}
\end{equation}
For $a\in (-1,1)$, the minimizer on the right-hand side exists and is denoted by $\oPsi_\beta(q,a,\zeta).$ Let $M_*$ denote the space of probability measures
\begin{equation}
M_{*} = \Pr([-1,1]).
\end{equation}
For $\mu\in M_{*}$ such that $\int\! a^2\, d\mu(a)=q\in [0,1]$, we define
\begin{equation}
\nTAP^\beta(\mu,\zeta):=
\int\! \Lambda_{\zeta}^\beta(q,a)\,d\mu(a)-\frac{\beta^2}{2}\int_q^{1}\!s\xi''(s)\zeta(s)\,ds.
\label{eqTAPfirstZeta}
\end{equation}
Notice that this functional depends only on the values of $\zeta(s)$ on the interval $[q,1]$, so we can view it as a functional on the space $\mathcal{M}_{q,1}$ of all cumulative distribution functions on $[q,1].$ Finally, define the TAP functional 
\begin{equation}
\nTAP^\beta(\mu):=
\inf_{\zeta\in \mathcal{M}_{0,1}} \nTAP^\beta(\mu,\zeta)
=\inf_{\zeta\in \mathcal{M}_{q,1}} \nTAP^\beta(\mu,\zeta).
\label{eqTAPfirst}
\end{equation}
We will denote the minimizer to the right-hand side by $\zeta_{\beta,\mu}.$ It was proved in \cite{GenTAP} that the minimizer is unique and that $\nTAP^\beta(\mu)$ is a continuous functional on $M_*$. Let us denote 
$$
S_N(q)= \Bigl\{m\in [-1,1]^N : \frac{1}{N}\|m\|^2=q\Bigr\}.
$$
For $m\in[-1,1]^N$, define the empirical measure 
\begin{equation}
\mu_m = \frac1N \sum_{i\leq N}\delta_{m_i}.
\end{equation}
The following were the main results in \cite{GenTAP}.
\begin{thm}[TAP correction]
\label{thm:TAPcorrection}
For any $c, t>0$, if $\eps,\delta>0$ are small enough and $n\geq 1$ is large enough then, for large $N$,
\begin{equation}
\label{eq:TAPunifconvbeta}
\p \Bigl( \forall m \in [-1,1]^N : \ \big|\nTAP^\beta_{N,n}(m,\eps,\delta)  -  \nTAP^\beta(\mu_m) \big| < t \Bigr)>1-e^{-cN}.
\end{equation}
\end{thm}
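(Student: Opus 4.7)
My plan is to establish the estimate for each fixed $m$ via a Parisi-type variational formula for the constrained free energy, and then upgrade to the uniform statement by a net argument in $m$.

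For fixed $m$ with $R(m,m)=q$, write $\tbs=\bs-m$ and expand each pure summand $H_{N,p}(m+\tbs)$ from \eqref{hamp} by partitioning the $p$ indices according to whether each sees $m_{i_j}$ or $\tilde\sigma_{i_j}$. This produces a Gaussian decomposition $H_N(\bs)-H_N(m)=L_m(\tbs)+Y_m(\tbs)$, in which $L_m(\tbs)=\grad H_N(m)\cdot\tbs$ is a linear external-field term and $Y_m$ is a centered Gaussian process (independent of $\grad H_N(m)$) whose covariance on the band $B(m,\eps)$ reduces, up to $O(\eps)$, to $N\widetilde\xi_q(R(\tbs^1,\tbs^2))$ with $\widetilde\xi_q(r)=\xi(q+r)-\xi(q)-\xi'(q)r$. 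Thus $Y_m$ behaves like a centered mixed $p$-spin Hamiltonian in $\tbs$, and the restricted dynamics on $B(m,\eps)$ is that of a shifted $p$-spin model with a site-inhomogeneous external field given by $\grad H_N(m)$.

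On $B_n(m,\eps,\delta)$ the $\tbs^i$ satisfy $R(\tbs^i,\tbs^j)\approx 0$ for $i\neq j$, so for large $n$ and small $\delta$ the cross-replica contribution to the partition function in \eqref{eq:TAPnbeta} is negligible, reducing the computation to the single-replica constrained free energy
$$
F_N^m(\eps):=\frac{1}{N}\log\sum_{\bs\in B(m,\eps)}e^{\beta[H_N(\bs)-H_N(m)]}.
$$
I would evaluate $F_N^m(\eps)$ by a Parisi-type variational scheme adapted to the band constraint and to the inhomogeneous external field: a Guerra--Toninelli interpolation using a random cascade supported on overlaps in $[q,1]$ gives the upper bound, and a matching Aizenman--Sims--Starr cavity computation provides the lower bound. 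The interpolation is arranged so that the external field $\partial_i H_N(m)$ at site $i$ couples to the boundary layer of the Parisi PDE through the Legendre transform, producing the site-level contribution $\Lambda_\zeta^\beta(q,m_i)$. Averaging over $i$ gives $\int\Lambda_\zeta^\beta(q,a)\,d\mu_m(a)$ and the remaining cascade contribution gives $-\frac{\beta^2}{2}\int_q^1 s\xi''(s)\zeta(s)\,ds$; infimizing over $\zeta\in\mathcal{M}_{q,1}$ recovers $\nTAP^\beta(\mu_m)$.

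To upgrade the pointwise estimate to the uniform one, I would combine (i) Borell--TIS concentration of $\nTAP_{N,n}^\beta(m,\eps,\delta)$ around its expectation for each fixed $m$, yielding an $e^{-c'N}$ tail with $c'$ arbitrarily large at the cost of $\eps,\delta$ small and $n$ large, (ii) the continuity of $\mu\mapsto\nTAP^\beta(\mu)$ on $M_*$, and (iii) coordinatewise Lipschitz control of $m\mapsto\nTAP_{N,n}^\beta(m,\eps,\delta)$ valid on the event where $\|\grad H_N\|_\infty$ is of order one. Applying a union bound over a coordinatewise $\eta$-net in $[-1,1]^N$ of cardinality $(2/\eta)^N$, with $c'>\log(2/\eta)$, absorbs the cardinality factor and produces the $e^{-cN}$ bound. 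The main obstacle is the Parisi-type formula for $F_N^m(\eps)$ in the second step: both the Guerra interpolation and the cavity computation must accommodate the inhomogeneous external field $\grad H_N(m)$, whose site values correlate with $m$ itself, and the band constraint simultaneously, so that $\Lambda_\zeta^\beta$ emerges precisely at the PDE boundary in the upper bound and the matching lower bound retains the coupling between the external field and the tilted measure.
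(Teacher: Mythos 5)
This statement is not proved in the present paper: it is Theorem~1 from the companion paper \cite{GenTAP}, recalled here as input for the zero-temperature analysis. So there is no in-paper proof to compare against. That said, your sketch does reproduce the skeleton of the \cite{GenTAP} argument, and the conditional Gaussian decomposition you write down is correct: on the exact band $m\cdot\tbs=0$ one checks directly from $\e H_N(\bs^1)H_N(\bs^2)=N\xi(R(\bs^1,\bs^2))$ that $Y_m(\tbs):=H_N(m+\tbs)-H_N(m)-\nabla H_N(m)\cdot\tbs$ is uncorrelated with $\nabla H_N(m)$ and has covariance $N\widetilde\xi_q(R(\tbs^1,\tbs^2))$, as you claim, with only an $O(\eps)$ error on $B(m,\eps)$. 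Likewise, the mechanism by which large $n$ upgrades Borell--TIS concentration from rate $e^{-ct^2N}$ to rate $e^{-ct^2nN}$, which is what lets the union bound over an exponential-size net be absorbed, is exactly the reason $n$ appears in the definition of $\nTAP^\beta_{N,n}$, so that part of the outline is right in spirit.

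There are two genuine gaps. First, the union-bound/Lipschitz step as stated does not go through. You propose coordinatewise Lipschitz control of $m\mapsto\nTAP^\beta_{N,n}(m,\eps,\delta)$ ``on the event where $\|\nabla H_N\|_\infty$ is of order one,'' but that event has exponentially small probability: $\nabla H_N(m)$ has $N$ entries of variance $\xi'(q)+O(1/N)$, so $\|\nabla H_N(m)\|_\infty\sim\sqrt{2\xi'(q)\log N}$ with overwhelming probability, and requiring it to be $O(1)$ is not a high-probability event. What one actually needs (and what suffices) is control of $\|\nabla H_N(m)\|/\sqrt{N}=O(1)$, which holds with probability $1-e^{-cN}$ and yields Lipschitz constant $O(1/\sqrt N)$ in the $\ell^2$ metric. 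More seriously, the domain $B_n(m,\eps,\delta)$ in the definition of $\nTAP^\beta_{N,n}(m,\eps,\delta)$ depends on $m$ through a hard constraint, so $m\mapsto\nTAP^\beta_{N,n}(m,\eps,\delta)$ is not Lipschitz or even continuous in $m$; a direct Lipschitz bound is unavailable. The standard fix is a sandwiching argument in the slack parameters: for $\|m-m'\|\leq\eta\sqrt{N}$ one has $B_n(m',\eps-c\eta,\delta-c\eta)\subseteq B_n(m,\eps,\delta)\subseteq B_n(m',\eps+c\eta,\delta+c\eta)$, which replaces the Lipschitz estimate by a two-sided comparison and then requires showing that $\nTAP^\beta_{N,n}$ is insensitive to $O(\eta)$ shifts of $\eps,\delta$ at the accuracy $t$. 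Your outline omits this.

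Second, the Parisi-type variational formula for the constrained band free energy with the site-inhomogeneous, $m$-correlated external field $\nabla H_N(m)$ is the essential content of the theorem, and the proposal correctly flags it as ``the main obstacle'' but does not resolve it. Writing the Guerra upper bound so that the external field feeds into the Legendre transform and produces exactly $\Lambda_\zeta^\beta(q,m_i)$, and producing a matching cavity lower bound while simultaneously respecting the band constraint and the almost-orthogonality constraint in $B_n$, is a multi-page argument in \cite{GenTAP} rather than a routine adaptation of the unconstrained Parisi proof. Without at least the key interpolation functional and the mechanism by which $\oPsi_\beta(q,m_i,\zeta)$ emerges as the optimal external field at each site, the proposal is a plausible plan rather than a proof.
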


\begin{thm}[TAP representation]\label{thm:GenTAP}
For any $q\in \supp\, \zeta_{\beta}^*$  and any $t>0$,
\begin{equation}
\label{eq:GenTAPbeta}
\lim_{N\to\infty}\p\Bigl(\,
\Bigl|F_N(\beta) - \max_{m\in S_N(q)}\Bigl( \frac{H_N(m)}{N} + \frac{1}{\beta}\nTAP^\beta(\mu_m)\Bigr)\Bigr|
< t\,
\Bigr) = 1.
\end{equation}
\end{thm}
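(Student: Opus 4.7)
The plan is to prove (\ref{eq:GenTAPbeta}) via matching upper and lower bounds, with the upper bound a direct consequence of Theorem~\ref{thm:TAPcorrection} and the lower bound resting on the ultrametric organization of the Gibbs measure at level $q\in\supp\zeta_{\beta}^{*}$.

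\textbf{Upper bound.} From the definition (\ref{eq:TAPnbeta}),
\[
\frac{H_{N}(m)}{N}+\frac{1}{\beta}\nTAP^{\beta}_{N,n}(m,\eps,\delta)=\frac{1}{\beta nN}\log\sum_{(\bs^{1},\ldots,\bs^{n})\in B_{n}(m,\eps,\delta)}e^{\beta\sum_{i=1}^{n}H_{N}(\bs^{i})},
\]
and the inclusions $B_{n}(m,\eps,\delta)\subset B(m,\eps)^{n}\subset\Sigma_{N}^{n}$ bound the right-hand side by $F_{N}(\beta)$ for every $m$. Taking the maximum over $m\in S_{N}(q)$ and applying Theorem~\ref{thm:TAPcorrection} to replace $\nTAP^{\beta}_{N,n}$ by $\nTAP^{\beta}(\mu_{m})$ uniformly in $m$ (with probability $1-e^{-cN}$) gives $\max_{m\in S_{N}(q)}\bigl(\tfrac{H_{N}(m)}{N}+\tfrac{1}{\beta}\nTAP^{\beta}(\mu_{m})\bigr)\leq F_{N}(\beta)+o(1)$.

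\textbf{Lower bound.} The task is to exhibit an $m\in S_{N}(q)$ whose TAP value is within $o(1)$ of $F_{N}(\beta)$. Since $q\in\supp\zeta_{\beta}^{*}$, the Ghirlanda--Guerra identities together with the ultrametricity theorem of \cite{ultrametricity} imply that $G_{N}(\bs)\propto e^{\beta H_{N}(\bs)}$ is asymptotically described by the Ruelle cascade associated with $\zeta_{\beta}^{*}$ and decomposes into level-$q$ clusters $(C_{k})_{k\geq 1}$, each corresponding to a TAP state $m_{k}$ obtained as the level-$q$ conditional mean. Parisi self-consistency gives $\|m_{k}\|^{2}/N\to q$, configurations in $C_{k}$ lie in a narrow band $B(m_{k},\eps)$ with probability close to one, and $n$-tuples drawn from $C_{k}$ satisfy the almost-orthogonality condition of (\ref{eq:Bn}); a small coordinate perturbation then places $m_{k}$ exactly on $S_{N}(q)$ without affecting $\mu_{m_{k}}$ or $H_{N}(m_{k})/N$ at leading order. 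Decomposing $Z_{N}(\beta)=\sum_{k}Z_{N,k}$ with $Z_{N,k}=\sum_{\bs\in C_{k}}e^{\beta H_{N}(\bs)}$ and keeping only the top finitely many clusters, whose aggregate Gibbs mass is $1-o(1)$, yields $F_{N}(\beta)\leq\max_{k}\tfrac{1}{\beta N}\log Z_{N,k}+o(1)$. Approximating each $Z_{N,k}$ by the narrow-band partition function around $m_{k}$ and invoking Theorem~\ref{thm:TAPcorrection} converts this into $\max_{k}\bigl(\tfrac{H_{N}(m_{k})}{N}+\tfrac{1}{\beta}\nTAP^{\beta}(\mu_{m_{k}})\bigr)+o(1)$, which is at most the maximum over all of $S_{N}(q)$.

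\textbf{Main obstacle.} The crux is making the Parisi-ansatz picture rigorous at finite $N$: when $\zeta_{\beta}^{*}$ has a continuous part, the ``level-$q$ slice'' must be defined so that the barycenter satisfies $\|m_{k}\|^{2}/N\approx q$, rather than the larger $\int_{q}^{1}\!t\,d\zeta_{\beta}^{*}(t)/\zeta_{\beta}^{*}([q,1])$ that a naive sub-tree cluster would produce; relatedly, one must verify that within-cluster overlaps concentrate at $q$ and that the top clusters carry almost all Gibbs mass. My intended route is to first approximate $\zeta_{\beta}^{*}|_{[q,1]}$ by a discrete measure with an atom at $q$, perform the cluster extraction and overlap concentration against this approximation, and pass to the limit using the continuity of $\nTAP^{\beta}$ on $M_{*}$ established in \cite{GenTAP} together with stability of the Parisi formula under perturbation of $\zeta$.
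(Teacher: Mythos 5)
This theorem is not proved in the present paper: it is quoted verbatim in Section 1.1 as one of ``the main results in \cite{GenTAP}'' and is used as a black box to derive the zero-temperature analogues, so there is no proof here to compare your attempt against. Judging the attempt on its own terms, the upper bound is clean and correct: since $B_n(m,\eps,\delta)\subset\Sigma_N^n$ one gets the deterministic inequality $\frac{H_N(m)}{N}+\frac{1}{\beta}\nTAP^\beta_{N,n}(m,\eps,\delta)\le \frac{1}{\beta nN}\log Z_N^n = F_N(\beta)$, and Theorem~\ref{thm:TAPcorrection} transfers it to $\nTAP^\beta(\mu_m)$ uniformly in $m$.

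The lower bound, however, has a genuine gap that your ``main obstacle'' paragraph senses but does not resolve, and the proposed discretization fix does not address it. Taking $m_k$ to be the conditional Gibbs mean given a level-$q$ cluster gives $\frac{1}{N}\|m_k\|^2 = \la R(\bs^1,\bs^2)\ra_{C_k\otimes C_k}$, the expected within-cluster overlap, which is strictly larger than $q$ whenever $q<q_{\mathrm{EA}}$; this excess is order one and does not vanish by passing to a discrete approximation of $\zeta_\beta^*$ with an atom at $q$ (two iid samples still land in the same sub-branch with positive probability, so the Gibbs-weighted barycenter norm stays above $q$). The assertion ``Parisi self-consistency gives $\|m_k\|^2/N\to q$'' is the self-consistency $\E\bigl[\partial_x\Phi^\beta_{\zeta_\beta^*}(q,X(q))^2\bigr]=q$ for the SDE-defined ancestor, which is a different object from the cluster conditional mean. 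Relatedly, the ``small coordinate perturbation'' onto $S_N(q)$ would then have to move $\|m\|^2/N$ by an $O(1)$ amount and cannot be discarded at leading order. The repair is to take $m$ to be the \emph{uniform} average of the barycenters of $k$ distinct level-$q$ sub-branches and send $k\to\infty$ (after $N\to\infty$): then $\|m\|^2/N\to q$, a sample from any further sub-branch lies in $B(m,\eps)$, and pairs of samples from distinct sub-branches satisfy the near-orthogonality constraint $|R(\bs^i,\bs^j)-R(m,m)|<\delta$, while the Poisson--Dirichlet weight structure guarantees $G_N^{\otimes n}(B_n(m,\eps,\delta))\gtrsim G_N(C_k)^n$ so that the reduction to Theorem~\ref{thm:TAPcorrection} and the finite-cluster decomposition of $Z_N$ can go through as you outline.
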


\begin{thm}[TAP states are ancestral]\label{Thm1label}
For any $q\in \supp\, \zeta_{\beta}^*$ and any $t>0$,
\begin{equation}
\label{eqTAPAS0}
\lim_{N\to\infty}\p\Bigl(\,
\Bigl|F_N(\beta) - \max_{m\in S_N(q)}\Bigl( \frac{H_N(m)}{N} + \frac{1}{\beta}\nTAP^\beta(\mu_m,\zeta_{\beta}^*)\Bigr)\Bigr|
< t\,
\Bigr) = 1.
\end{equation}
\end{thm}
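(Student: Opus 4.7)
The lower bound
$$\max_{m\in S_N(q)} \Bigl( \frac{H_N(m)}{N} + \frac{1}{\beta}\nTAP^\beta(\mu_m,\zeta_\beta^*)\Bigr) \geq F_N(\beta)-t$$
is immediate, since $\zeta_\beta^*$ is admissible in the infimum defining $\nTAP^\beta(\mu_m)$ in~\eqref{eqTAPfirst}, so $\nTAP^\beta(\mu_m,\zeta_\beta^*)\geq \nTAP^\beta(\mu_m)$ pointwise in $m$, and Theorem~\ref{thm:GenTAP} applies.

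For the reverse inequality, write $G(m):=H_N(m)/N+\beta^{-1}\nTAP^\beta(\mu_m)$ and $G^*(m):=H_N(m)/N+\beta^{-1}\nTAP^\beta(\mu_m,\zeta_\beta^*)\geq G(m)$. Combining Theorem~\ref{thm:TAPcorrection} with the trivial bound $\nTAP^\beta_{N,n}(m,\eps,\delta)\leq \beta F_N(\beta)-\beta H_N(m)/N$ (obtained by enlarging $B_n(m,\eps,\delta)$ to $\Sigma_N^n$), we get $G(m)\leq F_N(\beta)+t$ uniformly in $m$ with high probability. It therefore suffices to show that any near-maximizer $m_N$ of $G^*$ over $S_N(q)$ has vanishing ``gap'' $\Delta(m_N):=\nTAP^\beta(\mu_{m_N},\zeta_\beta^*)-\nTAP^\beta(\mu_{m_N})=o(1)$, since then $G^*(m_N)\leq G(m_N)+o(1)\leq F_N(\beta)+t+o(1)$. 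By the uniqueness of the minimizer $\zeta_{\beta,\mu}$ established in \cite{GenTAP}, this reduces to showing $\zeta_{\beta,\mu_{m_N}}\approx \zeta_\beta^*|_{[q,1]}$ for such $m_N$.

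To this end, I would introduce a \emph{canonical ancestor measure} $\mu^*$ at level $q$, defined via the stochastic representation of the Parisi PDE~\eqref{ParisiPDEOrig} as the law of $\partial_x\Phi_{\zeta_\beta^*}^\beta(q,X_q)$, where $X_q$ is the martingale at time $q$ associated with $\zeta_\beta^*$. Using $q\in\supp\,\zeta_\beta^*$, the first-order optimality conditions characterising $\zeta_\beta^*$ as minimiser of the Parisi functional, restricted to $[q,1]$, should coincide---via direct manipulation of the PDE and the Fenchel duality between $\Phi_\zeta^\beta$ and $\Lambda_\zeta^\beta$---with those characterising $\zeta_{\beta,\mu^*}$, giving $\zeta_{\beta,\mu^*}=\zeta_\beta^*|_{[q,1]}$ and hence $\nTAP^\beta(\mu^*,\zeta_\beta^*)=\nTAP^\beta(\mu^*)$. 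The Ruelle-cascade construction behind Theorem~\ref{thm:GenTAP} then produces $m_N^*\in S_N(q)$ with $\mu_{m_N^*}\to \mu^*$ and $G(m_N^*)\to F_N(\beta)$; conversely, using uniqueness of $\zeta_\beta^*$ as Parisi minimiser and continuity of $(\mu,\zeta)\mapsto \nTAP^\beta(\mu,\zeta)$, one shows that any near-maximiser of $G^*$ must have empirical measure close to $\mu^*$, which completes the argument. The main obstacle is the structural identity $\zeta_{\beta,\mu^*}=\zeta_\beta^*|_{[q,1]}$: it depends delicately on the assumption $q\in\supp\,\zeta_\beta^*$ and requires a careful translation between the Parisi variational problem on $[0,1]$ and the TAP variational problem on $[q,1]$.
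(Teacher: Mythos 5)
Your proposal diverges substantially from the paper's route, and the divergence creates a genuine gap. The paper (in the proof of the zero-temperature analogue, Theorem~\ref{ThmAncZT}, which carries over verbatim to this positive-temperature statement) does not attempt to characterize near-maximizers of $G^*(m):=H_N(m)/N+\beta^{-1}\nTAP^\beta(\mu_m,\zeta_\beta^*)$. Instead it uses two one-sided bounds: the lower bound from $\nTAP^\beta(\mu_m)\leq\nTAP^\beta(\mu_m,\zeta_\beta^*)$ plus Theorem~\ref{thm:GenTAP} (same as yours), and an \emph{upper} bound
$\e\max_{m\in S_N(q)} G^*(m)\leq \PP_\beta(\zeta_\beta^*)$
obtained by Guerra's RSB interpolation for the $q$-constrained model, with boundary condition at level $q$ given by the Legendre transform
$\max_{a\in[-1,1]}\bigl(ax+\lambda(a-q)+\Lambda_{\zeta_\beta^*}^\beta(q,a)\bigr)$.
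Taking $\lambda=0$ and the interpolating order parameter to be $\zeta_\beta^*\indic_{[0,q]}$, the double Fenchel transform recovers $\Phi_{\zeta_\beta^*}^\beta(q,\cdot)$ by convexity, the PDE continues the full Parisi solution on $[0,q]$, and the bound telescopes to exactly $\PP_\beta(\zeta_\beta^*)$. No statement about where the maximum is attained is needed.

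Your plan instead reduces the upper bound to the claim that any near-maximizer $m_N$ of $G^*$ satisfies $\zeta_{\beta,\mu_{m_N}}\approx\zeta_\beta^*|_{[q,1]}$, via ``one shows that any near-maximiser of $G^*$ must have empirical measure close to $\mu^*$.'' This step is unjustified and, as written, circular. You correctly have $G(m)\leq F_N(\beta)+t$ uniformly, and you want $G^*(m_N)\leq G(m_N)+o(1)$. But to get $\mu_{m_N}\approx\mu^*$ one needs to know that $\max_m G^*$ is close to $F_N(\beta)$ and that the near-maximizers of $G$ (not $G^*$) have the ancestral empirical profile; the first is precisely the content of the theorem being proved, and the second is proved in the paper (in the vicinity of (\ref{zt:ex:eq0})) \emph{using} Theorems~\ref{thm:GenTAP} and~\ref{Thm1label} together, so invoking it here would be circular. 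Uniqueness of $\zeta_\beta^*$ as Parisi minimizer and continuity of $(\mu,\zeta)\mapsto\nTAP^\beta(\mu,\zeta)$ constrain what the limiting minimizer over $\zeta$ can be once the limiting empirical measure is fixed, but they say nothing a priori about which $\mu_m$ nearly maximize $G^*$. Your structural identity $\zeta_{\beta,\mu^*}=\zeta_\beta^*|_{[q,1]}$ for the canonical ancestor measure $\mu^*=\mu_q$ is true and useful (it essentially underlies Theorem~\ref{add:thm1}), but it only handles one candidate measure, not all competitors in $\max_m G^*$. To close the argument you would still need the Guerra-type upper bound, at which point the detour through near-maximizers is unnecessary.
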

\begin{thm}[Generalized TAP equations]\label{ThmGTElabbeta}
	For any $m\in (-1,1)^N\cap S_N(q),$
	\begin{equation}
	\nabla \nTAP^\beta(\mu_m) = -\frac{1}{N}\Bigl(\oPsi_\beta(q,m_i,\zeta_{\beta,m})+m_i\beta^2\xi''(q)\int_q^1\zeta_{\beta,m} ds\Bigr)_{i\leq N},
	\label{eqTAPstates12beta}
	\end{equation}
	where $\zeta_{\beta,m}:=\zeta_{\beta,\mu_m}$ is the minimizer to \eqref{eqTAPfirst} with $\mu=\mu_m.$
\end{thm}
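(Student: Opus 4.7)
The plan is to combine an envelope-theorem reduction with a direct use of the Parisi PDE. Since $\zeta_{\beta,m}$ is the unique minimizer of $\zeta \mapsto \nTAP^\beta(\mu_m, \zeta)$ on $\mathcal{M}_{q,1}$ and the minimum depends continuously on $m$ (both shown in \cite{GenTAP}), an envelope argument reduces the computation to
\[
\nabla \nTAP^\beta(\mu_m) = \nabla_m \nTAP^\beta(\mu_m, \zeta) \Big|_{\zeta = \zeta_{\beta,m}},
\]
the outer gradient being taken with $\zeta$ held fixed. Expanding \eqref{eqTAPfirstZeta} gives $\nTAP^\beta(\mu_m, \zeta) = \frac{1}{N}\sum_{i \leq N} \Lambda^\beta_\zeta(q(m), m_i) - \frac{\beta^2}{2}\int_{q(m)}^1 s\xi''(s)\zeta(s)\,ds$ where $q(m) = \|m\|^2/N$, and I differentiate coordinate-wise.

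Three contributions arise in $\partial_{m_j}$: the direct appearance of $m_j$ in $\Lambda^\beta_\zeta(q, m_j)$, the variation of $q(m)$ in the first argument of every $\Lambda^\beta_\zeta(q, m_i)$, and the variation of $q(m)$ as the lower integration limit. Applying the envelope to \eqref{eqCCPar} yields $\partial_a \Lambda^\beta_\zeta(q, a) = -\oPsi_\beta(q, a, \zeta)$, and the Parisi PDE \eqref{ParisiPDEOrig} combined with $\partial_x \Phi^\beta_\zeta(q, \oPsi_\beta(q, a, \zeta)) = a$ gives
\[
\partial_q \Lambda^\beta_\zeta(q, a) = -\frac{\beta^2 \xi''(q)}{2}\Bigl(\partial_{xx}\Phi^\beta_\zeta\bigl(q, \oPsi_\beta(q, a, \zeta)\bigr) + \zeta(q) a^2\Bigr).
\]
Using $\frac{1}{N}\sum_i m_i^2 = q$, the $\zeta(q) a^2$ piece in the sum cancels exactly against the boundary contribution from the integration limit, and the theorem reduces to proving
\[
\frac{1}{N}\sum_{i=1}^N \partial_{xx}\Phi^\beta_{\zeta_{\beta,m}}\bigl(q, \oPsi_\beta(q, m_i, \zeta_{\beta,m})\bigr) = \int_q^1 \zeta_{\beta,m}(s)\,ds.
\]

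This identity is the main obstacle; it fails for generic $\zeta$ and encodes the first-order optimality of $\zeta_{\beta,m}$. The approach is variational: perturb $\zeta \mapsto \zeta + t\eta$ along signed measures $\eta$ on $[q,1]$ with $\eta([q,1]) = 0$ and use the Feynman--Kac representation of the G\^ateaux derivative $\partial_\zeta \Phi^\beta_\zeta(q, x)[\eta]$ along the Parisi SDE $X_s$ (as developed for the standard Parisi functional in \cite{AC15}). This yields the stationarity condition $\frac{1}{N}\sum_i \e\bigl[\bigl(\partial_x \Phi^\beta_{\zeta_{\beta,m}}(s, X^{(i)}_s)\bigr)^2\bigr] = s$ on $\supp(d\zeta_{\beta,m})$, where $X^{(i)}_s$ starts at $\oPsi_\beta(q, m_i, \zeta_{\beta,m})$ at time $q$. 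Next, applying It\^o's formula to $\partial_{xx}\Phi^\beta_\zeta(s, X_s)$---whose finite-variation part equals $-\beta^2 \xi''(s)\zeta(s)(\partial_{xx}\Phi^\beta_\zeta)^2\,ds$---taking expectations, averaging over $i$, and performing a final integration by parts against $d\zeta_{\beta,m}$ on $[q,1]$ converts this stationarity condition, together with complementary slackness off the support, into the displayed identity. The technical subtlety is handling $\zeta_{\beta,m}$ with atoms or gaps in its support; this mirrors the corresponding analysis of the Parisi measure in \cite{AC15}.
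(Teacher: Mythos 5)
Your proposal is correct and matches the strategy behind the proof of this statement in \cite{GenTAP}, from which the theorem is quoted in the present paper without re-derivation: an envelope argument freezes $\zeta=\zeta_{\beta,m}$, the $\zeta(q)a^2$ contribution from $\partial_q\Lambda^\beta_\zeta$ cancels the boundary term of the $\int_q^1 s\xi''\zeta$ integral, and the computation reduces to the identity $\frac{1}{N}\sum_i \partial_{xx}\Phi^\beta_{\zeta_{\beta,m}}(q,\oPsi_\beta(q,m_i,\zeta_{\beta,m}))=\int_q^1\zeta_{\beta,m}(s)\,ds$. That identity follows exactly as you indicate by combining the It\^o/Feynman--Kac relation recalled here as Lemma 37 of \cite{GenTAP}, namely $v_i(q)+\zeta(q)m_i^2 = 1-\int_q^1\e u_i(s)^2\,d\zeta_{\beta,m}(s)$, with the first-order stationarity $\frac{1}{N}\sum_i \e u_i(s)^2=s$ on $\supp(d\zeta_{\beta,m})$ and one integration by parts.
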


\subsection{The energy of generalized TAP states}

Our first main result computes the energy and TAP correction for all generalized TAP states at positive temperature in terms of their distance from the origin. Given $\beta>0$, let us denote the TAP free energy functional by
\begin{equation}
f_{m}(\beta):=\frac{\beta H_N(m)}{N}+\nTAP^\beta(\mu_m),\,\,\forall m\in [-1,1]^N,
\end{equation}
and, given $\eps>0$, let
\begin{equation}
M_{\beta,q}(\eps):=\Bigl\{m\in S_N(q) : f_{m}(\beta)\geq \max_{m\in S_N(q)}f_m(\beta)-\eps\Bigr\}
\end{equation}
be the set of $\eps$-maximizers of $f_m(\beta).$ For simplicity of notation, we keep the dependence of $f_{m}(\beta)$ and $M_{\beta,q}(\eps)$ on $N$ implicit. The elements of the set $M_{\beta,q}(\eps_N)$ with $\eps_N \to 0$ and $q\in\supp\, \zeta_{\beta}^*$ are called the generalized TAP states.

 \begin{thm}[The energy of generalized TAP states] \label{zt:thm2}
 	For any $q\in \supp\,\zeta_{\beta}^*$ and any sequence $\eps_N\geq 0$ going to zero, almost surely,
 	\begin{align}
 	\begin{split}
 	\label{zt:lem:eq1}
 	\lim_{N\to\infty} \max_{M_{\beta,q}(\eps_N)}\Bigl|\frac{H_N(m)}{N}-E_\beta(q)\Bigr|=0
 	\end{split}
 	\end{align}
 	and
 	\begin{align}
 	\begin{split}\label{zt:lem:eq2}
 	\lim_{N\to\infty} \max_{M_{\beta,q}(\eps_N)}\Bigl|\nTAP^\beta(\mu_m)-\bigl(\mathcal{P}_\beta(\zeta_{\beta}^*)-\beta E_\beta(q)\bigr)\Bigr|=0,
 	\end{split}
 	\end{align}
 	where 
 	\begin{align}\label{zt:lem:eq3}
 	E_\beta(q):=\beta\int_0^q \xi''(s)\Bigl(\int_s^1\zeta_\beta^*(t)dt\Bigr)ds=\beta\xi'(q)\int_q^1\zeta_{\beta}^*(s)\,ds+\beta\int_0^q\!\xi'(s)\zeta_{\beta}^*(s)\,ds.
 	\end{align}
 
 \end{thm}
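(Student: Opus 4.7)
The two claims \eqref{zt:lem:eq1} and \eqref{zt:lem:eq2} are equivalent by Theorem \ref{thm:GenTAP}, which asserts $\beta H_N(m)/N + \nTAP^\beta(\mu_m) = \mathcal P_\beta(\zeta_\beta^*) + o(1)$ uniformly on $M_{\beta,q}(\eps_N)$; so I would aim at \eqref{zt:lem:eq1} and deduce \eqref{zt:lem:eq2} from it. The starting point is to combine Theorems \ref{thm:GenTAP} and \ref{Thm1label}. Since $\nTAP^\beta(\mu_m)\le\nTAP^\beta(\mu_m,\zeta_\beta^*)$ by the definition of the infimum in \eqref{eqTAPfirst}, sandwiching the lower bound from \ref{thm:GenTAP} against the upper bound from \ref{Thm1label} (which is attained as the maximum) yields, uniformly on $M_{\beta,q}(\eps_N)$,
\begin{equation*}
\nTAP^\beta(\mu_m)=\nTAP^\beta(\mu_m,\zeta_\beta^*)+o(1),\qquad \beta H_N(m)/N+\nTAP^\beta(\mu_m,\zeta_\beta^*)=\mathcal P_\beta(\zeta_\beta^*)+o(1).
\end{equation*}
By the uniqueness of the minimizer $\zeta_{\beta,m}$ in \eqref{eqTAPfirst} proved in \cite{GenTAP}, the first identity forces $\zeta_{\beta,m}\to\zeta_\beta^*$, and each $m\in M_{\beta,q}(\eps_N)$ is also a near-maximizer of the \emph{ancestral} functional $m\mapsto H_N(m)/N + \beta^{-1}\nTAP^\beta(\mu_m,\zeta_\beta^*)$ on $S_N(q)$.

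It now suffices, by the second identity above, to compute the limit of $\int\Lambda_{\zeta_\beta^*}^\beta(q,a)\,d\mu_m(a)$. The plan is to use the generalized TAP equation \eqref{eqTAPstates12beta} with $\zeta_{\beta,m}=\zeta_\beta^*$, combined with the Lagrange-multiplier condition for $m$ to be a critical point of $H_N(m)/N+\beta^{-1}\nTAP^\beta(\mu_m,\zeta_\beta^*)$ on $S_N(q)$. Together these force, up to negligible error, $m_i=\partial_x\Phi_{\zeta_\beta^*}^\beta(q,X_{q,i})$ with $X_{q,i}:=\oPsi_\beta(q,m_i,\zeta_\beta^*)$, whose empirical distribution $\tfrac1N\sum_i\delta_{X_{q,i}}$ concentrates on the ancestral (Ruelle-cascade) law at level $q$. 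Using the pointwise identity $\Lambda_{\zeta_\beta^*}^\beta(q,\partial_x\Phi(q,x))=\Phi(q,x)-x\,\partial_x\Phi(q,x)$ rewrites $\int\Lambda\,d\mu_m$ as the empirical mean $\tfrac1N\sum_i[\Phi(q,X_{q,i})-X_{q,i}\partial_x\Phi(q,X_{q,i})]$, and Parisi PDE calculus together with the Auffinger--Chen stationarity condition $\e[(\partial_x\Phi_{\zeta_\beta^*}^\beta(q,X_q))^2]=q$ valid on $\supp\zeta_\beta^*$ should then yield
\begin{equation*}
\int\Lambda_{\zeta_\beta^*}^\beta(q,a)\,d\mu_m(a)\longrightarrow\mathcal P_\beta(\zeta_\beta^*)-\beta E_\beta(q)+\frac{\beta^2}{2}\int_q^1\!s\xi''(s)\zeta_\beta^*(s)\,ds,
\end{equation*}
giving \eqref{zt:lem:eq2}; substitution into the second identity above then delivers \eqref{zt:lem:eq1}.

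The hardest step will be the rigorous identification of the limiting empirical distribution of $(X_{q,i})_{i\le N}$ with the ancestral cascade law, which is in general non-Gaussian in RSB regimes, and the justification of the Parisi-PDE manipulations on it -- done uniformly in $m\in M_{\beta,q}(\eps_N)$ in the almost-sure sense. A potentially cleaner alternative avoids any explicit description of $\mu_m$: set $V_\beta(q):=\max_{m\in S_N(q)}(H_N(m)/N+\beta^{-1}\nTAP^\beta(\mu_m,\zeta_\beta^*))$, so that Theorem \ref{Thm1label} gives $V_\beta(q)=F(\beta)+o(1)$ for every $q\in\supp\zeta_\beta^*$ and hence $V_\beta$ is asymptotically constant there; combining the envelope-theorem expression for $V_\beta'(q)$ with the stationarity of $\zeta_\beta^*$ at the Parisi minimum, and using the homogeneity identity $m\cdot\nabla H_N(m)=\sum_p p\beta_p H_{N,p}(m)$ to relate $H_N(m)/N$ to derivatives of $V_\beta$ in $q$, should yield the formula $H_N(m)/N\to E_\beta(q)$ directly. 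In either approach, uniformity over $M_{\beta,q}(\eps_N)$ is propagated through the uniform concentration estimate \eqref{eq:TAPunifconvbeta} of Theorem \ref{thm:TAPcorrection}.
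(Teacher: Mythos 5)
Your first observation is correct: the two limits are equivalent via Theorem \ref{thm:GenTAP}, and combining Theorems \ref{thm:GenTAP} and \ref{Thm1label} does give $\nTAP^\beta(\mu_m)=\nTAP^\beta(\mu_m,\zeta_\beta^*)+o(1)$ uniformly on $M_{\beta,q}(\eps_N)$, which is indeed a key ingredient of the paper's argument (it is the paper's \eqref{zt:ex:eq2}). The claim that this then ``forces'' $\zeta_{\beta,m}\to\zeta_\beta^*$ is also what the paper proves, but it is not automatic: one needs the compactness/continuity of $(\mu,\zeta)\mapsto\nTAP^\beta(\mu,\zeta)$, a subsequence-and-contradiction step, and the uniqueness of the minimizer from \cite[Theorem~10]{GenTAP}; you gesture at this but do not supply the argument.

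The main plan you propose afterwards --- identify the empirical law of $(X_{q,i})_{i\le N}$ with the level-$q$ cascade distribution, and then integrate Parisi-PDE identities against it --- is genuinely different from the paper's route and substantially harder. That identification is essentially the content of the deep spin-distribution results from \cite{SKmodel,AJ16}, and the paper does \emph{not} invoke it here; it proves the asymptotic analogue in Theorem \ref{add:thm1} separately, for the pushforward law $\mu_q$ given in \eqref{eqmuqdef}, without claiming $\mu_m\to\mu_q$. Instead the paper computes $H_N(m)/N$ by differentiating in $\beta$: it sets $f_N(\beta):=\max_{m\in S_N(q)} f_m(\beta)$, uses Theorem \ref{thm:TAPcorrection} to deduce convexity of $f_m(\beta)$ in $\beta$, and then the standard convexity sandwich ($\eps_N$-near-maximizers of a converging sequence of convex functions have derivatives squeezing to the derivative of the limit) yields $f_m'(\beta)\to\PP'(\beta)$ uniformly on $M_{\beta,q}(\eps_N)$, with $\PP'(\beta)=\beta\int_0^1\xi'(s)\zeta_\beta^*(s)\,ds$ known from \cite{ACdual}. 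The crucial explicit formula is Lemma \ref{zero:lem9}, which gives
\begin{equation*}
f_m'(\beta)=\frac{H_N(m)}{N}+\beta\int_q^1\bigl(\xi'(s)-\xi'(q)\bigr)\zeta_{\beta,m}(s)\,ds,
\end{equation*}
and once $\zeta_{\beta,m}\to\zeta_\beta^*$ is established, a short computation produces $E_\beta(q)$. This bypasses any description of $\mu_m$.

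Your alternative ``envelope in $q$'' proposal is closer in spirit to the paper but has a concrete obstruction: $V_\beta(q)=F(\beta)+o(1)$ on $\supp\zeta_\beta^*$ gives you \emph{constancy on the support}, which furnishes no derivative information unless the support contains a neighbourhood of $q$; the support may well be a single point or a Cantor set. Differentiating in $q$ also moves the constraint set $S_N(q)$, which is awkward to handle. The paper's choice to differentiate in $\beta$ is precisely what avoids both problems: $\beta$ varies over an interval, $\PP(\beta)$ is genuinely differentiable, convexity in $\beta$ is available from Theorem \ref{thm:TAPcorrection}, and the $\beta$-derivative of $\nTAP^\beta(\mu)$ has the closed form supplied by Lemma \ref{zero:lem9}.
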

\begin{remark}[Classical case]\rm 
By definition, $\nTAP^\beta(\mu_m)\leq \nTAP^\beta(\mu_m,\zeta_{\beta}^*)$ and therefore, by Theorems \ref{thm:GenTAP} and \ref{Thm1label}, we must have $\nTAP^\beta(\mu_m)\approx \nTAP^\beta(\mu_m,\zeta_{\beta}^*)$ for all the generalized TAP states.
Classical TAP states correspond to $q=\qea$, which is the largest point in the support of $\zeta_{\beta}^*,$ in which case the TAP correction simplifies to (see \cite[Proposition 11]{GenTAP})
	\begin{align}\label{add:eq11}
		\nTAP^\beta(\mu_m)=-\frac{1}{N}\sum_{i=1}^{N} I(m_i)+\beta^2 C(\qea),
	\end{align}
	where 
	\begin{align*}
	I(a)&=\frac{1+a}{2}\log \frac{1+a}{2}+\frac{1-a}{2}\log \frac{1-a}{2},
	\\
	C(q)&=\frac{1}{2}\bigl(\xi(1)-\xi(q)-\xi'(q)(1-q)\bigr).
	\end{align*}
In particular, (\ref{zt:lem:eq2}) implies that the entropy of the classical TAP states is given by
\begin{equation}
-\frac{1}{N}\sum_{i=1}^{N} I(m_i) \approx
\PP_\beta(\zeta_{\beta}^*)-\beta\xi'(q_{\EA})(1-q_{\EA})-\beta^2\int_{0}^{q_{\mathrm{EA}} }\!\xi'(s)\zeta_{\beta}^*(s)\,ds
-\beta^2 C(\qea),
\end{equation}
so both energy and entropy of classical TAP states are constant.
\qed
\end{remark}

There exists an asymptotic description of measures $\mu_m$ corresponding to ancestor states $m$ in the Parisi ansatz, and we will derive an asymptotic analogue of (\ref{zt:lem:eq2}) directly from this description. Such description first appeared in the physics literature in \cite{M3}. Rigorously, an asymptotic distribution of spins (from which a description of $\mu_m$ can be extracted) in terms of the Parisi measure was derived in Chapter 4 in \cite{SKmodel} under certain regularizing perturbations that were introduced in \cite{Pspins}, and it was observed in \cite{AJ16} that for generic models the same proof works without perturbations. The results in \cite{SKmodel} were written in terms of the discrete Ruelle probability cascades, whose overlap distribution approximates the Parisi measure $\zeta_{\beta}^*$, but one can write them directly in terms of the Parisi measure (without discretization) in terms of the solution of the SDE 
\begin{equation}
dX(s)=\beta\xi''(s)\partial_x\Phi_{\zeta_{\beta}^*}^\beta(s,X(s))ds+\xi''(s)^{1/2}dW_s , X(0)=0,
\end{equation}
as was done, for example, in \cite{AJ16} and \cite{AufJag18}. We will not describe all these results precisely here, but simply mention that, for $q\in \supp\, \zeta_{\beta}^*$, asymptotically the coordinates of an ancestor state $m$ with $\frac{1}{N}\|m\|^2=q$ look like i.i.d. random variables with the distribution  
\begin{equation}
	\mu_q(\,\cdot\,)=\p\Bigl(\partial_x\Phi_{\zeta_\beta^*}^\beta\bigl(q,X(q)\bigr)\in \,\cdot\,\Bigr).
\label{eqmuqdef}
\end{equation}
In other words, $\mu_q$ is an asymptotic analogue of $\mu_m$. We will show the following.
\begin{thm}\label{add:thm1} 
For any $q\in \supp\, \zeta_{\beta}^*$ and $\mu_q$ defined in (\ref{eqmuqdef}),
	\begin{align}
		\nTAP^\beta(\mu_q)=\mathcal{P}_\beta(\zeta_{\beta}^*)-\beta E_\beta(q).
	\end{align}
Moreover, for any $q\in [0,1),$
	\begin{align}
	\nTAP^\beta(\mu_q)\leq \mathcal{P}_\beta(\zeta_{\beta}^*)-\beta E_\beta(q).
	\end{align}
\end{thm}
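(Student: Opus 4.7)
The plan is to prove the stronger identity
\begin{equation*}
\nTAP^\beta(\mu_q, \zeta_\beta^*) = \mathcal{P}_\beta(\zeta_\beta^*) - \beta E_\beta(q) \quad \text{for every } q \in [0, 1).
\end{equation*}
Since $\nTAP^\beta(\mu_q) = \inf_\zeta \nTAP^\beta(\mu_q, \zeta) \leq \nTAP^\beta(\mu_q, \zeta_\beta^*)$, this immediately delivers the second (inequality) claim of the theorem. To upgrade to the equality when $q \in \supp\,\zeta_\beta^*$, I would combine this upper bound with Theorem \ref{zt:thm2} and the asymptotic description of ancestor states recalled just before the statement of the theorem.

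The first step is convex duality. Write $\Phi := \Phi_{\zeta_\beta^*}^\beta$. By the standard smoothness and strict convexity of $\Phi(q, \cdot)$ inherited from the Parisi PDE (\ref{ParisiPDEOrig}), for each $x_0 \in \R$ the infimum defining $\Lambda_{\zeta_\beta^*}^\beta\bigl(q, \partial_x\Phi(q, x_0)\bigr)$ is attained at $y = x_0$ and equals $\Phi(q, x_0) - x_0 \partial_x \Phi(q, x_0)$. Applied $\mu_q$-almost surely with $x_0 = X(q)$, this gives
\begin{equation*}
\int \Lambda_{\zeta_\beta^*}^\beta(q, a)\, d\mu_q(a) = \E\bigl[\Phi(q, X(q)) - X(q) \partial_x \Phi(q, X(q))\bigr].
\end{equation*}

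The second step is It\^o calculus. I would apply It\^o's formula to $\Phi(s, X(s))$ and to $X(s)\partial_x\Phi(s, X(s))$ on $[0, q]$; substituting the Parisi PDE cancels the $\partial_t\Phi$ contribution against the It\^o correction for $\partial_{xx}\Phi$, while the drift of $X$ couples to the $\partial_x\Phi$ factor. After taking expectations, one obtains expressions for $\E[\Phi(q, X(q))]$ and $\E[X(q)\partial_x\Phi(q, X(q))]$ as $\Phi(0, 0)$ (respectively, $0$, using $X(0) = 0$ and $\partial_x\Phi(0, 0) = 0$ from the evenness of $\log 2\cosh x$) plus explicit integrals in $s$ weighted by $\xi''(s)$, $\zeta_\beta^*(s)$ and moments of derivatives of $\Phi$ along $X(s)$. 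Adding back the TAP correction $-\frac{\beta^2}{2}\int_q^1 s\xi''(s)\zeta_\beta^*(s)\, ds$, and simplifying by integration by parts in $s$ (using $\frac{d}{ds}\int_s^1 \zeta_\beta^*(t)\, dt = -\zeta_\beta^*(s)$), the resulting expression should match $\Phi(0, 0) - \frac{\beta^2}{2}\int_0^1 s\xi''(s)\zeta_\beta^*(s)\, ds - \beta E_\beta(q) = \mathcal{P}_\beta(\zeta_\beta^*) - \beta E_\beta(q)$, with $E_\beta(q)$ as in (\ref{zt:lem:eq3}).

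For the equality when $q \in \supp\,\zeta_\beta^*$, pick any sequence $\eps_N \to 0$ and $m_N \in M_{\beta, q}(\eps_N)$. By the asymptotic description recalled before the theorem, the empirical measures $\mu_{m_N}$ concentrate on $\mu_q$ in the weak topology; by continuity of $\nTAP^\beta$ on $M_*$ (established in \cite{GenTAP}) and the limit (\ref{zt:lem:eq2}) of Theorem \ref{zt:thm2}, one concludes $\nTAP^\beta(\mu_q) = \lim_N \nTAP^\beta(\mu_{m_N}) = \mathcal{P}_\beta(\zeta_\beta^*) - \beta E_\beta(q)$, which matches the upper bound. The main obstacle is executing the It\^o step cleanly when $\zeta_\beta^*$ carries atoms, where the Parisi PDE is only weakly defined; the standard remedy is to approximate $\zeta_\beta^*$ by smooth measures (as in the Ruelle cascade truncations of \cite{SKmodel}), carry out the computation for the approximants, and pass to the limit using continuity of $\Phi_\zeta^\beta$ and $\partial_x \Phi_\zeta^\beta$ in $\zeta$.
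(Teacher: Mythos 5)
Your general strategy — pairing the convex-conjugate expression for $\Lambda_{\zeta_\beta^*}^\beta$ with It\^o calculus along the Parisi SDE on $[0,q]$ — is exactly the paper's approach. However, there is a genuine gap in the plan. The ``stronger identity''
\[
\nTAP^\beta(\mu_q,\zeta_\beta^*)=\mathcal{P}_\beta(\zeta_\beta^*)-\beta E_\beta(q)\quad\text{for every }q\in[0,1)
\]
is false, and the computation you outline will not produce it. Carrying out the It\^o/Feynman--Kac steps (with $u(s)=\partial_x\Phi_{\zeta_\beta^*}^\beta(s,X(s))$, $v(s)=\partial_{xx}\Phi_{\zeta_\beta^*}^\beta(s,X(s))$ and the identity $\e u(t)^2=t$ on the support of $\zeta_\beta^*$) gives
\[
\nTAP^\beta(\mu_q,\zeta_\beta^*)=\mathcal{P}_\beta(\zeta_\beta^*)-\beta E_\beta(q)+\frac{\beta^2}{2}\int_0^q\xi''(s)\zeta_\beta^*(s)\bigl(\e u(s)^2-s\bigr)\,ds,
\]
and the last term does not vanish in general: the integral runs over $s\in[\min\supp\zeta_\beta^*,\,q]$, where $\zeta_\beta^*(s)>0$ but $\e u(s)^2=s$ is only guaranteed on the support, not on the gaps. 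So your plan, as stated, proves neither the inequality nor the equality; what it produces is the above identity with a residual term, and you then still have to (a) show this term is nonpositive via the first-order optimality condition of the Parisi functional (varying $\zeta_\beta^*$ by a factor $\tfrac12$ on $[0,q)$), giving the inequality claim, and (b) show it vanishes when $q\in\supp\zeta_\beta^*$. The paper does (b) by quoting \cite[Eq.~(46)]{CPTAP17}, which is a statement about the Parisi order parameter, not something that falls out of the algebra.

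Your alternative route to the equality case — combining \eqref{zt:lem:eq2} from Theorem \ref{zt:thm2} with the ``asymptotic description'' that $\mu_{m_N}\to\mu_q$ — is also not on firm ground as presented. The asymptotic description of ancestor-state magnetization distributions is only sketched in the paper (it requires external results based on Ghirlanda--Guerra-type perturbations), and more importantly it concerns ancestor states sampled from the Gibbs measure, whereas $M_{\beta,q}(\eps_N)$ consists of near-maximizers of the TAP functional; the identification of these two families is exactly the kind of structural fact that the TAP theorems are trying to establish, so using it here is at minimum circular in spirit. The paper's direct route — compute, identify the residual, kill it with the known identity from \cite{CPTAP17} — is self-contained and sidesteps both issues.
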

The first equation is an asymptotic analogue of (\ref{zt:lem:eq2}), and the second equation states that, in general, $\mathcal{P}_\beta(\zeta_{\beta}^*)-\beta E_\beta(q)$ is an upper bound on the TAP correction for such measures.

\subsection{TAP approach at zero temperature}
Next, we will describe the analogue of the above results at zero temperature. Let us define 
\begin{align}
\nTAP_{N,1}^\infty(m,\varepsilon)
&=\frac{1}{N}\max_{B(m,\varepsilon)}\bigl(H_N(\bs)-H_N(m)\bigr),\\
\nTAP_{N,n}^\infty(m,\varepsilon,\delta)&=\frac{1}{nN}\max_{B_n(m,\varepsilon,\delta)}\sum_{i=1}^n\bigl(H_N(\bs^i)-H_N(m)\bigr).\label{eq:TAPn}
\end{align}
Then we can write
\begin{equation}
\label{eq:basic_ineq}
\max_{\bs\in\Sigma_N} \frac{H_N(\bs)}{N}
\geq \frac{H_N(m)}{N} + \nTAP^\infty_{N,1}(m,\eps)
\geq \frac{H_N(m)}{N} + \nTAP^\infty_{N,n}(m,\eps,\delta).
\end{equation}
We will be interested in points $m\in[-1,1]^N$ where the above inequalities become approximate equalities, for large $N$. In other words, we are interested to characterize points $m$ that have many near ground states orthogonal to each other relative to $m$.

Let $\oN$ be the family of c.d.f.s induced by all measures $\gamma$ on $[0,1)$ with 
\begin{equation}
\int_0^1\!\gamma(s)\,ds=\int_0^1\!\gamma([0,s])\,ds<\infty.
\label{eqGammaBound}
\end{equation}
For $\gamma\in \oN$, consider the solution $\Theta_{\gamma}$ to the following PDE,
\begin{align}\label{zero:eq2}
\partial_t\Theta_{\gamma}=-\frac{\xi''(t)}{2}\Bigl(\partial_{xx}\Theta_{\gamma}+\gamma(t) \bigl(\partial_x\Theta_{\gamma}\bigr)^2\Bigr)
\end{align}
on $[0,1]\times\mathbb{R}$ with the boundary condition $\Theta_\gamma(1,x)=|x|.$ It was shown in \cite[Corollary 2]{AC15} (see also \cite[Section 2]{CHL}) how such solution $\Theta_{\gamma}(t,x)$ can be defined for all $(t,x)\in [0,1]\times\mathbb{R}$ under the condition (\ref{eqGammaBound}). For $ a\in [-1,1]$, we define
\begin{align}
\Lambda_\gamma^\infty(q,a):=\inf_{x\in \mathbb{R}}\Bigl(\Theta_{\gamma}(q,x)-ax\Bigr).
\label{eqLambdaDef}
\end{align}
We will see that, for $a\in (-1,1)$, the minimizer is unique and finite (see Remark \ref{lalRemarkAL1} below). We will denote this minimizer by $\oPsi(q,a,\gamma)$, so that
\begin{equation}
\Lambda^\infty_\gamma(q,a):=\Theta_\gamma\bigl(q,\oPsi(q,a,\gamma)\bigr)-a\oPsi(q,a,\gamma), \,\, a\in (-1,1).
\label{eqCCParPsi}
\end{equation}
Moreover, for $a=\pm 1$, this infimum is well-defined and (see Remark \ref{lalRemarkPM1} below)
\begin{equation}
\Lambda_{\gamma}^\infty (q,\pm 1)=\frac{1}{2}\int_q^1\!\xi''(s)\gamma(s)\,ds.
\end{equation}
If $\mu\in M_*$ with $q=\int\! a^2\,d\mu(a),$ we define
\begin{align}
\nTAP^\infty(\mu,\gamma)=\int\! \Lambda_\gamma^\infty(q,a)\,d\mu(a)-\frac{1}{2}\int_q^1\! s\xi''(s)\gamma(s)\,ds.
\end{align}
Again, notice that this functional depends  only on the values of $\gamma(s)$ on the interval $[q,1]$, so we can view it as a functional on the space $\nN$ of measures on $[q,1)$ such that 
$$
\int_q^1\! \gamma(s)\,ds=\int_q^1\! \gamma([q,s])\,ds<\infty.
$$ 
Finally, we let
\begin{align}
\label{eqTAPinfty}
\nTAP^\infty(\mu)
=\inf_{\gamma\in \oN}\nTAP^\infty(\mu,\gamma)
=\inf_{\gamma\in \nN}\nTAP^\infty(\mu,\gamma).
\end{align}
We are now ready to state our main results on the generalized TAP free energy at zero temperature. The first is a uniform concentration result for the TAP free energy defined in \eqref{eq:TAPn} around the (non-random) functional we have just defined \eqref{eqTAPinfty}, applied to the empirical measure 
$\mu_m = \frac1N \sum_{i\leq N}\delta_{m_i}$.
\begin{thm}[TAP correction at zero temperature]\label{ThmZTut}
For any $c,t>0$, if $\varepsilon,\delta>0$ are small enough and $n$ is large enough then, for large $N,$
\begin{align}
\p\Bigl(\forall m\in [-1,1]^N:\bigl|\nTAP_{N,n}^\infty(m,\varepsilon,\delta)-\nTAP^\infty(\mu_m)\bigr|<t\Bigr)>1-e^{-cN}.
\label{eqThmZTut}
\end{align}	
\end{thm}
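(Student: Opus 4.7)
The plan is to deduce this zero-temperature concentration from its positive-temperature counterpart, Theorem \ref{thm:TAPcorrection}, by taking $\beta\to\infty$ in a controlled way. Three ingredients are needed: (A) a uniform-in-$m$ comparison between the random functionals $\nTAP^\beta_{N,n}$ and $\nTAP^\infty_{N,n}$; (B) a uniform-in-$\mu$ convergence of the deterministic functionals $\beta^{-1}\nTAP^\beta(\mu)$ to $\nTAP^\infty(\mu)$; and (C) one application of Theorem \ref{thm:TAPcorrection} at a sufficiently large but fixed $\beta$, with the three estimates chained together.

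For (A), the elementary log-sum-exp bound combined with $|B_n(m,\varepsilon,\delta)|\leq 2^{nN}$ gives
\begin{equation*}
\bigl|\beta^{-1}\nTAP^\beta_{N,n}(m,\varepsilon,\delta)-\nTAP^\infty_{N,n}(m,\varepsilon,\delta)\bigr|\leq \frac{\log 2}{\beta}
\end{equation*}
uniformly in $m,\varepsilon,\delta,n,N$. Ingredient (B) is the analytic core. Setting $\gamma=\beta\zeta$ and $\Psi(t,x):=\beta^{-1}\Phi_\zeta^\beta(t,\beta x)$ transforms the Parisi PDE \eqref{ParisiPDEOrig} into
\begin{equation*}
\partial_t\Psi=-\frac{\xi''(t)}{2}\bigl(\partial_{xx}\Psi+\gamma(t)(\partial_x\Psi)^2\bigr),\qquad \Psi(1,x)=\beta^{-1}\log 2\cosh(\beta x),
\end{equation*}
whose boundary datum converges uniformly to $|x|$ as $\beta\to\infty$; standard PDE stability then yields $\beta^{-1}\Lambda_\zeta^\beta(q,a)\to\Lambda_\gamma^\infty(q,a)$, and the correction term rescales cleanly so that $\beta^{-1}\nTAP^\beta(\mu,\zeta)\to \nTAP^\infty(\mu,\gamma)$. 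To exchange the two infima, observe that the constraint $\zeta\in\mathcal{M}_{0,1}$ corresponds under the rescaling to $\gamma([0,1])=\beta\to\infty$. For the upper bound, given $\gamma_0\in\oN$ of finite total mass (obtained from a general element of $\oN$ by truncation at $1-\eta$, the residual being absorbed via the explicit identity $\Lambda_\gamma^\infty(q,\pm 1)=\tfrac{1}{2}\int_q^1\xi''(s)\gamma(s)\,ds$ together with \eqref{eqGammaBound}), the probability measure $\zeta_\beta:=\gamma_0/\beta+(1-\gamma_0([0,1])/\beta)\delta_1$ lies in $\mathcal{M}_{0,1}$ for large $\beta$ and satisfies $\beta^{-1}\nTAP^\beta(\mu)\leq \nTAP^\infty(\mu,\gamma_0)+o(1)$. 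For the matching lower bound, an almost-optimal $\zeta_\beta$ rescales to $\gamma_\beta=\beta\zeta_\beta$, whose restrictions to $[q,1-\eta]$ are pre-compact from a priori control of $\nTAP^\beta$, and any subsequential limit lies in $\nN$. Uniformity in $\mu\in M_*$ follows from weak compactness of $M_*$ together with joint continuity of the Parisi functionals in $(\mu,\zeta)$ and $(\mu,\gamma)$, the former recalled from \cite{GenTAP} and the latter established by the same scheme adapted to \eqref{zero:eq2}.

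With (A) and (B) in hand, (C) completes the proof: given $c,t>0$, choose $\beta$ large enough that both $\log 2/\beta<t/3$ and $\sup_{\mu\in M_*}|\beta^{-1}\nTAP^\beta(\mu)-\nTAP^\infty(\mu)|<t/3$; then apply Theorem \ref{thm:TAPcorrection} at this fixed $\beta$ with tolerance $\beta t/3$ and exponent $c$ to obtain $\varepsilon,\delta,n$ such that for large $N$ the event $|\nTAP^\beta_{N,n}(m,\varepsilon,\delta)-\nTAP^\beta(\mu_m)|<\beta t/3$ holds simultaneously for all $m\in[-1,1]^N$ with probability at least $1-e^{-cN}$; dividing by $\beta$ and chaining the three estimates yields \eqref{eqThmZTut}. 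The main technical obstacle is the infimum-exchange inside (B): because the natural rescaling pushes $\gamma_\beta([0,1])$ to infinity, one must argue carefully that the excess mass of any almost-minimizing $\gamma_\beta$ accumulates harmlessly near $s=1$ and is absorbed into the well-understood boundary value $\Lambda_\gamma^\infty(q,\pm 1)$—this is where the compactness/truncation argument above does the real work.
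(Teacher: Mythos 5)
Your proposal is correct and follows essentially the same route as the paper. Ingredient (A) is exactly the paper's first line; ingredient (B) is the content of the paper's Lemma \ref{zero:lem1} (stated as a separate lemma with the sharp uniform bound $\bigl|\nTAP^\infty(\mu)-\beta^{-1}\nTAP^\beta(\mu)\bigr|\leq \log 2/\beta$, proved by the same rescaling $\gamma=\beta\zeta$, $\Theta_{\beta\zeta}^\beta(t,x)=\beta^{-1}\Phi_\zeta^\beta(t,\beta x)$ and the boundary-datum comparison $|x|\leq \beta^{-1}\log 2\cosh(\beta x)\leq |x|+\log 2/\beta$); ingredient (C) is the paper's chaining step with Theorem \ref{thm:TAPcorrection}. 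The one place your sketch is murkier than the paper is the infimum-exchange inside (B): invoking the boundary identity $\Lambda_\gamma^\infty(q,\pm1)=\tfrac12\int_q^1\xi''\gamma\,ds$ to absorb the truncation residual is not the right tool (that identity concerns the boundary in the variable $a$, not truncation of $\gamma$ near $s=1$). The paper instead takes the infimum in the two-sided estimate $\nTAP^\infty(\mu,\beta\zeta)\leq \beta^{-1}\nTAP^\beta(\mu,\zeta)\leq \nTAP^\infty(\mu,\beta\zeta)+\log 2/\beta$ over $\zeta\in\mathcal M_{q,1}$, yielding $\inf_{\gamma\in\nN^\beta}\nTAP^\infty(\mu,\gamma)$ on both sides, and then uses the $d_1$-Lipschitz property \eqref{eqLipInfTAP} together with \eqref{eqGammaBound} to show this restricted infimum converges to the full infimum $\nTAP^\infty(\mu)$ as $\beta\to\infty$; this is the clean way to justify the truncation you allude to. Since Theorem \ref{ThmZTut} only requires the comparison for $\beta$ large, your weaker asymptotic version of (B) suffices, and the final chaining with tolerances $t/3$ each is exactly what the paper does (with $\beta t>2\log 2$).
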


Recall that the Parisi formula for the ground state energy of the mixed $p$-spin model derived in \cite{AC17} states that
\begin{align}\label{GSE}
	\lim_{N\rightarrow\infty}\max_{\bs\in \Sigma_N}\frac{H_N(\bs)}{N}=\inf_{\gamma\in \mathcal{N}_{0,1}}\Bigl(\Theta_{\gamma}(0,0)-\frac{1}{2}\int_0^1\!s\xi''(s)\gamma(s)\,ds\Bigr),
\end{align}
and this variational formula has a unique minimizer, denoted $\gamma^*.$ The next result is the TAP representation for the ground state energy, which is the zero-temperature
 analogue of the TAP representation for the free energy in Theorem \ref{thm:GenTAP} above.
\begin{thm}[TAP representation at zero temperature]\label{ThmTAPrepZT}
	For any $q\in\supp\,\gamma^*$ and any $t>0,$
	\begin{equation}
	\label{eq:GenTAP}
	\lim_{N\to\infty}\p\Bigl(\,
	\Bigl|\max_{\bs\in \Sigma_N}\frac{H_N(\bs)}{N} - \max_{m\in S_N(q)}\Bigl( \frac{H_N(m)}{N} + \nTAP^\infty(\mu_m)\Bigr)\Bigr|
	< t\,
	\Bigr) = 1.
	\end{equation}
\end{thm}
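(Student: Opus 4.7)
I would prove the statement by a two-sided bound: the $\geq$ direction is a direct consequence of Theorem \ref{ThmZTut} combined with \eqref{eq:basic_ineq}, while the $\leq$ direction is obtained by taking $\beta\to\infty$ in the positive-temperature TAP representation (Theorem \ref{thm:GenTAP}).

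For the $\geq$ direction, I would apply Theorem \ref{ThmZTut} with $\varepsilon,\delta$ small and $n$ large enough that, on an event of probability at least $1-e^{-cN}$, one has $\nTAP_{N,n}^\infty(m,\varepsilon,\delta)\geq \nTAP^\infty(\mu_m)-t/2$ uniformly in $m\in[-1,1]^N$. Combined with \eqref{eq:basic_ineq} this gives $\max_{\bs}H_N(\bs)/N\geq H_N(m)/N+\nTAP^\infty(\mu_m)-t/2$ for every such $m$, and maximizing over $m\in S_N(q)$ concludes this half.

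For the $\leq$ direction, fix a large $\beta$. Using $F_N(\beta)\geq \max_{\bs}H_N(\bs)/N$ together with Theorem \ref{thm:GenTAP} applied at a point $q_\beta\in\supp\,\zeta_{\beta}^*$ chosen close to $q$, I obtain with high probability
\begin{equation*}
\max_{\bs\in\Sigma_N}\frac{H_N(\bs)}{N}\leq \max_{m\in S_N(q_\beta)}\Bigl[\frac{H_N(m)}{N}+\frac{1}{\beta}\nTAP^\beta(\mu_m)\Bigr]+\frac{t}{4}.
\end{equation*}
It then remains to compare this right-hand side with $\max_{m\in S_N(q)}[H_N(m)/N+\nTAP^\infty(\mu_m)]+t/4$. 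This splits into two ingredients: (a) the uniform functional estimate $\beta^{-1}\nTAP^\beta(\mu_m)\leq \nTAP^\infty(\mu_m)+o_\beta(1)$, and (b) transport of the maximum from the sphere $S_N(q_\beta)$ to $S_N(q)$ with negligible loss. For (a) I would take a near-minimizer $\gamma$ of $\nTAP^\infty(\mu_m,\cdot)$, truncating away from $s=1$ if necessary so that $\gamma([q_\beta,1))<\beta$, and define a test probability measure $\zeta^{(\beta)}$ on $[q_\beta,1]$ with $\zeta^{(\beta)}|_{[q_\beta,1)}=\gamma/\beta$ and the remaining mass placed at $\{1\}$. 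Under the rescaling $\tilde\Phi(t,x):=\beta^{-1}\Phi_{\zeta^{(\beta)}}^\beta(t,\beta x)$, equation \eqref{ParisiPDEOrig} transforms into \eqref{zero:eq2} with weight $\beta\zeta^{(\beta)}\to\gamma$ and boundary data $\beta^{-1}\log 2\cosh(\beta x)\to|x|$, so $\tilde\Phi\to\Theta_\gamma$ and hence $\beta^{-1}\Lambda_{\zeta^{(\beta)}}^\beta(q_\beta,a)\to\Lambda_\gamma^\infty(q,a)$. Assembling terms yields $\beta^{-1}\nTAP^\beta(\mu_m,\zeta^{(\beta)})\to\nTAP^\infty(\mu_m,\gamma)$. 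For (b) I would use $q_\beta\to q$ (available for large $\beta$ via the approximation of $\gamma^*$ by $\beta\zeta_\beta^*$ established in the analysis of \cite{AC17}) and a local Lipschitz estimate on $m\mapsto H_N(m)/N+\nTAP^\infty(\mu_m)$ to move the maximum from $S_N(q_\beta)$ to $S_N(q)$.

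The principal obstacle is the \emph{uniformity in $m$} of the comparison $\beta^{-1}\nTAP^\beta(\mu_m)\to\nTAP^\infty(\mu_m)$. Pointwise convergence for a fixed $\mu$ is an essentially routine consequence of the PDE rescaling above, but uniform control across all empirical measures $\mu_m$ with prescribed second moment, together with the simultaneous matching of the support points $q_\beta\in\supp\,\zeta_\beta^*$ and $q\in\supp\,\gamma^*$ and of the associated minimizers, will require careful use of the Lipschitz and concavity properties of the Parisi functional and the continuity of $\nTAP^\beta(\cdot)$ on $M_*$ proved in \cite{GenTAP}, together with their zero-temperature counterparts paralleling \cite{AC15,AC17}. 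Executing these uniform estimates is the main technical work.
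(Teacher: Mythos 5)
Your two-sided strategy is sound, and your $\geq$ direction (Theorem~\ref{ThmZTut} plus the elementary chain \eqref{eq:basic_ineq}) is a clean alternative to the paper's symmetric treatment; the paper does not single it out as a separate half but gets it for free from a two-sided absolute-value comparison. For the $\leq$ direction your outline also matches the paper: go to $q_\beta\in\supp\,\zeta_\beta^*$ with $q_\beta\to q$ (the paper cites the vague convergence $\beta\zeta_\beta^*\to\gamma^*$ from \cite{AC15}), apply Theorem~\ref{thm:GenTAP} at $q_\beta$, and transport the maximum from $S_N(q_\beta)$ to $S_N(q)$.

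The part you flag as ``the main technical work''---uniformity in $\mu$ of the comparison $\beta^{-1}\nTAP^\beta(\mu)\to\nTAP^\infty(\mu)$---is however not an open obstacle in this framework; it is precisely what Lemma~\ref{zero:lem1} supplies, in the sharp uniform form $|\nTAP^\infty(\mu)-\beta^{-1}\nTAP^\beta(\mu)|\le (\log 2)/\beta$ for every $\mu\in M_*$. Its proof is considerably lighter than the test-measure construction you sketch: the change of variables $\Theta^\beta_{\beta\zeta}(t,x)=\beta^{-1}\Phi_\zeta^\beta(t,\beta x)$ turns the Parisi PDE into \eqref{zero:eq2} with weight $\beta\zeta$, and the only difference from $\Theta_{\beta\zeta}$ is the boundary datum, which differs from $|x|$ by at most $(\log 2)/\beta$; the maximum principle then propagates this uniform $(\log 2)/\beta$ bound to all $(t,x)$, to $\Lambda$, to $\nTAP^\beta(\mu,\zeta)$ versus $\nTAP^\infty(\mu,\beta\zeta)$, and (after minimizing over $\zeta\in\mathcal M_{q,1}$, i.e.\ over total-mass-$\le\beta$ parameters $\gamma$, and letting $\beta\uparrow\infty$ via the $L^1$-Lipschitz bound \eqref{eqLipInfTAP}) to $\nTAP^\beta(\mu)$ versus $\nTAP^\infty(\mu)$, with no $\mu$-dependence whatsoever. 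So no truncation of $\gamma$ near $s=1$ or careful matching of minimizers is needed; the ``uniformity across all $\mu_m$'' that worried you is free. Combining this with $|F_N(\beta)-\max_\bs H_N(\bs)/N|\le(\log 2)/\beta$ gives the paper's clean display \eqref{zero:ThmTAPrepZT:proof:eq4}.

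One smaller issue in step (b): $\nTAP^\infty$ is only uniformly continuous on the compact set $M_*$, not Lipschitz in $m$, so a ``local Lipschitz estimate on $m\mapsto H_N(m)/N+\nTAP^\infty(\mu_m)$'' is not quite the right tool. The paper instead constructs $m'\in S_N(q_\beta)$ from $m\in S_N(q)$ so that the coordinate magnitudes are sorted in the same order, which forces $d_1(\mu_{|m|},\mu_{|m'|})\le\|m-m'\|/\sqrt N\le\delta$ and hence (by uniform continuity, plus the symmetry $\nTAP^\infty(\mu_m)=\nTAP^\infty(\mu_{|m|})$) a small change in the TAP correction; the Hamiltonian part is controlled separately by Dudley's entropy integral and Gaussian concentration. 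These details are needed to make the sphere transport rigorous, and your proposal would have to add them.
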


Note that by combining the two theorems above, if $m$ is an approximate maximizer in \eqref{eq:GenTAP}, then the inequalities of \eqref{eq:basic_ineq} become approximate equalities. Namely,
\begin{equation}
\max_{\bs\in\Sigma_N} \frac{H_N(\bs)}{N}
\approx \frac{H_N(m)}{N} + \nTAP^\infty_{N,1}(m,\eps)
\approx \frac{H_N(m)}{N} + \nTAP^\infty_{N,n}(m,\eps,\delta),
\end{equation}
provided that $\eps$ and $\delta$ are small enough, and $n$ is large enough.
In other words, any generalized TAP state contains many samples $\bs^i\in B(m,\eps)$ which approximately maximize the energy, and such that the centered samples $\tbs^i=\bs^i-m$ are approximately orthogonal.

Recall that the functional $\nTAP^\infty(\mu)$ was defined in \eqref{eqTAPinfty} as an infimum over the space of c.d.f.s $\nN$. The following theorem shows that the minimizer is unique. 
\begin{thm}\label{zero:thm4}
For any $\mu\in M_*$ with $q=\int\! a^2\,d\mu(a),$ $\gamma\to\nTAP^\infty(\mu,\gamma)$ has a unique minimizer $\gamma_{\mu}\in\nN$.
\end{thm}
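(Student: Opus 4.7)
The plan is to obtain uniqueness by combining existence of a minimizer with strict convexity of $\gamma\mapsto\nTAP^\infty(\mu,\gamma)$ on $\nN$. The case $q=1$ is vacuous since then $[q,1)$ is empty and $\nN$ is trivial, so assume $q<1$; in this case the moment constraint $\int a^2\,d\mu(a)=q$ forces $\mu((-1,1))>0$, because otherwise $\mu$ would be supported on $\{\pm 1\}$ and $\int a^2\,d\mu(a)$ would equal $1$. The second term of $\nTAP^\infty(\mu,\gamma)$ is linear in $\gamma$, so it suffices to show strict convexity of $\gamma\mapsto\int\Lambda_\gamma^\infty(q,a)\,d\mu(a)$ on $\nN$.

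The core step is joint convexity of $(\gamma,x)\mapsto\Theta_\gamma(q,x)$ on $\nN\times\mathbb{R}$; once this is in hand, the marginal infimum $\Lambda_\gamma^\infty(q,a)=\inf_x(\Theta_\gamma(q,x)-ax)$ is automatically convex in $\gamma$. This is the zero-temperature analogue of the convexity of the Parisi functional at positive temperature proved in \cite{AC15}, and I would establish it by either of two routes. The direct route differentiates the PDE \eqref{zero:eq2} twice along the linear path $(\gamma_\lambda,x_\lambda)=((1-\lambda)\gamma_0+\lambda\gamma_1,(1-\lambda)x_0+\lambda x_1)$ and propagates nonnegativity of the second variation from $t=1$, where it holds because $\Theta_\gamma(1,x)=|x|$ is convex, back to $t=q$ by Gronwall. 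The alternative route approximates $\gamma$ by finite-step measures, for which $\Theta_\gamma$ admits an explicit cavity-type representation as an expected supremum over a Poisson tree that is manifestly jointly convex, and then passes to the limit using the continuity of $\Theta_\gamma$ in $\gamma$ established in \cite{CHL}.

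To upgrade to strict convexity, take $\gamma_0\neq\gamma_1$ in $\nN$ and set $\gamma_{1/2}=\frac{1}{2}(\gamma_0+\gamma_1)$. For each $a\in(-1,1)$ the minimizer $\oPsi(q,a,\gamma_{1/2})$ is unique and finite by Remark \ref{lalRemarkAL1}, so the envelope theorem reduces the convexity inequality for $\Lambda_\gamma^\infty(q,a)$ along the segment to the joint convexity inequality for $\Theta_\gamma$ at $(\gamma_{1/2},\oPsi(q,a,\gamma_{1/2}))$. The PDE argument above is in fact strict whenever the $\gamma$-perturbation is nontrivial on $[q,1)$, since $\xi''>0$ there, which yields $\Lambda_{\gamma_{1/2}}^\infty(q,a)<\frac{1}{2}(\Lambda_{\gamma_0}^\infty(q,a)+\Lambda_{\gamma_1}^\infty(q,a))$ for every $a\in(-1,1)$. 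Integrating against $\mu$ and using $\mu((-1,1))>0$ gives strict convexity of $\nTAP^\infty(\mu,\cdot)$. Existence is then standard: growth of the functional together with \eqref{eqGammaBound} gives a uniform bound on $\int_q^1\gamma(s)\,ds$ along any minimizing sequence, vague compactness extracts a limit, and continuity of $\Theta_\gamma$ in $\gamma$ supplies lower semi-continuity.

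The main technical obstacle is the strict convexity step. Convexity transfers painlessly across the marginal infimum and the integration in $\mu$, but the boundary values $a=\pm 1$, where $\Lambda_\gamma^\infty(q,\pm 1)=\frac{1}{2}\int_q^1\xi''(s)\gamma(s)\,ds$ is exactly linear in $\gamma$, contribute nothing strict; the whole argument must route through $a\in(-1,1)$, which is precisely why the reduction to $q<1$ is essential. Keeping track of strict inequalities through the Gronwall propagation is also more delicate at zero temperature than at positive temperature because the boundary value $|x|$ is only piecewise smooth, and this is the main place where the argument has to depart from the positive-temperature uniqueness proof in \cite{GenTAP}.
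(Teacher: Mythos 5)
Your uniqueness argument is essentially the paper's: strict joint convexity of $(\gamma,x)\mapsto\Theta_\gamma(q,x)$ on $\nN\times\Reals$, pushed through the marginal infimum to $\Lambda^\infty_\gamma(q,a)$ and then through integration against $\mu$. The paper simply cites Lemma 5 of \cite{CHL} for this convexity rather than re-deriving it, and your observation that $q<1$ forces $\mu((-1,1))>0$ and that the routing must go through $a\in(-1,1)$ (since $\Lambda^\infty_\gamma(q,\pm1)$ is affine in $\gamma$) is correct and is used implicitly in the paper.

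The existence step, however, has a real gap. You claim that ``growth of the functional together with \eqref{eqGammaBound} gives a uniform bound on $\int_q^1\gamma(s)\,ds$ along any minimizing sequence,'' but this is false. From Lemma \ref{lemThEst} one only gets
\begin{equation*}
\frac{1}{2}\int_q^1(1-s)\xi''(s)\gamma(s)\,ds\le\nTAP^\infty(\mu,\gamma)\le\frac{1}{2}\int_q^1(1-s)\xi''(s)\gamma(s)\,ds+\bigl(\xi'(1)-\xi'(q)\bigr)^{1/2},
\end{equation*}
so the functional value only controls the integral of $\gamma$ against the weight $(1-s)\xi''(s)$, which vanishes at $s=1$. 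Mass of $\gamma$ accumulating near $1$ can have bounded weighted integral and unbounded $\int_q^1\gamma\,ds$ (e.g.\ $\gamma(s)\sim(1-s)^{-3/2}$); the boundedness of the weighted integral even fails to give a pointwise bound on $\gamma$ better than $O((1-s)^{-2})$, which is not $L^1$. So you cannot conclude coercivity, you cannot conclude that the vague limit is in $\nN$, and the Lipschitz bound \eqref{eqLipInfTAP} is with respect to $d_1$ and does not give lower semi-continuity along a merely vaguely convergent sequence on which $d_1$ does not converge. The paper closes exactly these holes by a different route: it builds the candidate minimizer as the scaled limit $\gamma_{\beta,\mu}=\beta\zeta_{\beta,\mu}$ of the positive-temperature minimizers, and the crucial a priori bound comes from the positive-temperature derivative identity of Lemma \ref{zero:lem9},
\begin{equation*}
\beta\int_q^1\bigl(\xi'(s)-\xi'(q)\bigr)\zeta_{\beta,\mu}(s)\,ds\le\nTAP^\infty(\mu),
\end{equation*}
whose weight $\xi'(s)-\xi'(q)$ does not vanish at $s=1$ and therefore yields $\sup_{\beta}\int_q^1\gamma_{\beta,\mu}\,ds<\infty$ together with a local pointwise bound. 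Even then the mass escaping to $1$ must be tracked explicitly (the $\Delta$ in $\bN_{q,1}$), and Proposition \ref{add:prop1} supplies the vague-topology continuity that lets the $\Delta$-contributions to the two terms of $\nTAP^\infty$ cancel. Your proposal would need a genuine replacement for this bound and this continuity statement; a direct-method compactness argument as written does not deliver them.
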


We think of the minimizer as the order parameter associated to a generalized TAP state with $\mu_m=\mu$. It is related the order parameter of the original model through the following theorem.
\begin{thm}[Ancestral property of zero-temperature TAP states]\label{ThmAncZT}
	For any $q\in\supp\,\gamma^*$ and any $t>0$,
	\begin{equation}
	\label{eqTAPAS}
	\lim_{N\to\infty}\p\Bigl(\,\Bigl|\max_{\bs\in \Sigma_N}\frac{H_N(\bs)}{N}- \max_{m\in S_N(q)}\Bigl( \frac{H_N(m)}{N} + \nTAP^\infty(\mu_m,\gamma^*)\Bigr)\Bigr|<t\,\Bigr)=1.
	\end{equation}
\end{thm}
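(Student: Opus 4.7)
The plan is to reduce the claim to a pointwise zero-temperature Guerra-type upper bound. By Theorem~\ref{ThmTAPrepZT}, with probability tending to one, $\max_{\bs\in\Sigma_N}H_N(\bs)/N$ is close to $\max_{m\in S_N(q)}\bigl(H_N(m)/N+\nTAP^\infty(\mu_m)\bigr)$. Since $\nTAP^\infty(\mu_m)=\inf_{\gamma\in\nN}\nTAP^\infty(\mu_m,\gamma)\le \nTAP^\infty(\mu_m,\gamma^*)$ holds pointwise in $m$, taking maxima gives
\begin{equation*}
\max_{\bs\in\Sigma_N}\frac{H_N(\bs)}{N} \le \max_{m\in S_N(q)}\Bigl(\frac{H_N(m)}{N}+\nTAP^\infty(\mu_m,\gamma^*)\Bigr)+o(1)
\end{equation*}
on the same event, which is one half of \eqref{eqTAPAS}. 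The nontrivial direction is the reverse inequality; since the Parisi formula \eqref{GSE} identifies $\lim_N\max_{\bs}H_N(\bs)/N$ with $\Theta_{\gamma^*}(0,0)-\frac{1}{2}\int_0^1 s\xi''(s)\gamma^*(s)\,ds$, it suffices to prove the uniform-in-$m$ bound
\begin{equation*}
\frac{H_N(m)}{N}+\nTAP^\infty(\mu_m,\gamma^*)\,\le\, \Theta_{\gamma^*}(0,0)-\frac{1}{2}\int_0^1 s\xi''(s)\gamma^*(s)\,ds+o(1),\qquad m\in S_N(q),
\end{equation*}
on a high-probability event.

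To establish this bound I would run a Guerra-type interpolation localized at the level-$q$ slice, paralleling the argument of \cite{AC17} for the Parisi formula at zero temperature. The idea is to split $\gamma^*$ across $[0,q]$ and $[q,1]$: the $[0,q]$-piece of a Ruelle-probability-cascade auxiliary process, interpolated against $H_N$, accounts for the bulk contribution $\Theta_{\gamma^*}(0,0)-\frac{1}{2}\int_0^q s\xi''(s)\gamma^*(s)\,ds$, while the $[q,1]$-piece interacts with $\mu_m$ through a coordinate-wise Legendre transform that evaluates to $\Lambda_{\gamma^*}^\infty(q,m_i)$, producing $\nTAP^\infty(\mu_m,\gamma^*)$. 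Gaussian integration by parts, together with the Parisi PDE~\eqref{zero:eq2}, then yields a non-positive interpolation derivative. Uniformity over the continuum $S_N(q)$ is handled by an $\eps$-net argument, using Borell--TIS concentration of $H_N/N$ and the Lipschitz continuity of $a\mapsto \Lambda_{\gamma^*}^\infty(q,a)$ on $[-1,1]$, which makes $m\mapsto\nTAP^\infty(\mu_m,\gamma^*)$ Lipschitz in $m$.

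The main obstacle is executing the Guerra interpolation at the level of a \emph{maximum} rather than a log-partition sum where Gaussian integration-by-parts is standard. I would finesse this by introducing an auxiliary inverse temperature $\beta_N\to\infty$ slowly with $N$, replacing the maximum $\max_\bs H_N(\bs)/N$ by the soft-max $\frac{1}{\beta_N N}\log\sum_\bs e^{\beta_N H_N(\bs)}$, and running the positive-temperature Guerra argument underlying Theorem~\ref{Thm1label} at $\beta_N$ before passing to the limit. Carrying out this limit requires the uniform-in-$m$ convergences $\frac{1}{\beta}\Phi_{\zeta_\beta^*}^\beta(q,\cdot)\to \Theta_{\gamma^*}(q,\cdot)$ and $\frac{1}{\beta}\nTAP^\beta(\mu_m,\zeta_\beta^*)\to \nTAP^\infty(\mu_m,\gamma^*)$ as $\beta\to\infty$, together with $\zeta_\beta^*/\beta\to\gamma^*$ in an appropriate weak sense; these should already be available from the proofs of Theorems~\ref{ThmZTut} and~\ref{ThmTAPrepZT}, and the remaining work is to ensure the joint $\beta_N$--$N$ limit can be taken without losing the uniform-in-$m$ control required by \eqref{eqTAPAS}.
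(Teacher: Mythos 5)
Your opening reduction matches the paper's exactly: the inequality $\nTAP^\infty(\mu_m)\le\nTAP^\infty(\mu_m,\gamma^*)$ together with Theorem~\ref{ThmTAPrepZT} and Gaussian concentration reduce the claim to the one-sided estimate $\limsup_{N\to\infty}\e\max_{m\in S_N(q)}\bigl(H_N(m)/N+\nTAP^\infty(\mu_m,\gamma^*)\bigr)\le\mathcal{P}^\infty(\gamma^*)$, and you are right that a Guerra-type RSB bound localized at the level-$q$ slice is what is needed. Where you diverge from the paper is in the execution. You treat running the Guerra bound at zero temperature as an obstacle and retreat to a soft-max route with a slowly growing $\beta_N$. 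The paper instead applies the zero-temperature Guerra RSB bound directly over the slice $S_N(q)$ (in the spirit of \cite{AC17}), with an additional Lagrange multiplier $\lambda$ enforcing the constraint $q_m=q$ and the boundary condition $\Theta_\gamma^\lambda(q,x)=\max_{a\in[-1,1]}\bigl(ax+\lambda(a-q)+\Lambda_{\gamma^*}^\infty(q,a)\bigr)$; the whole argument then collapses to a single observation: taking $\lambda=0$ and $\gamma=\gamma^*1_{[0,q]}$, the Fenchel biconjugation identity $\max_a\bigl(ax+\Lambda_{\gamma^*}^\infty(q,a)\bigr)=\Theta_{\gamma^*}(q,x)$ makes the boundary datum at time $q$ coincide with $\Theta_{\gamma^*}(q,\cdot)$, so continuing the Parisi PDE down to $0$ with the restriction of $\gamma^*$ yields $\Theta_\gamma^0(0,0)=\Theta_{\gamma^*}(0,0)$, and the RSB bound equals $\mathcal{P}^\infty(\gamma^*)$ on the nose. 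No $\varepsilon$-net over $S_N(q)$, no Gaussian integration-by-parts at the level of a maximum, and no joint $\beta_N$--$N$ limit are needed.

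Your alternative soft-max route is plausible in principle (it is the mechanism behind Lemma~\ref{zero:lem1} and the proofs of Theorems~\ref{ThmZTut} and \ref{ThmTAPrepZT}), but here it is substantially heavier: you would have to run the Guerra argument underlying Theorem~\ref{Thm1label} at a diverging $\beta_N$ while keeping error terms uniform in both $N$ and $m$, and you would need the vague convergence $\beta\zeta_\beta^*\to\gamma^*$ and the matching of supports to be quantitative. You flag this gap yourself, and it is a real gap; indeed the paper explicitly notes that, unlike the preceding sections, this proof works at zero temperature directly precisely to avoid such control. The short path you are missing is the Legendre-duality step above, which makes the zero-temperature Guerra bound not only applicable but exactly tight at $\gamma=\gamma^*1_{[0,q]}$.
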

Note that if $m$ is an approximate maximizer in \eqref{eq:GenTAP}, then it must also be an approximate maximizer  of \eqref{eqTAPAS} and
\begin{equation}
	\nTAP^\infty(\mu_m) \approx \nTAP^\infty(\mu_m,\gamma^*).
\end{equation}

Next, in order to describe the critical point equations for the TAP states,
\begin{equation}
\frac{1}{N}\nabla H_N(m)=-\nabla \nTAP^\infty(\mu_m),
\label{eqTAPstates1}
\end{equation} 
we need to compute the gradient of $\nTAP^\infty(\mu_m).$ The statement is somewhat more involved than what one would expect from the direct analogue of Theorem \ref{ThmGTElabbeta} above. Denote $\gamma_m:=\gamma_{\mu_m}$ and let
\begin{equation}
\Delta(m):= \frac{1}{\xi'(1)-\xi'(q)}\Bigl(\nTAP^\infty(\mu_m)-\int_q^1\! (\xi'(s)-\xi'(q))\gamma_m(s)\,ds\Bigr).
\label{eqDefDelta}
\end{equation}
\begin{thm}[Gradient of TAP correction]\label{ThmGTElab}
For any $m\in (-1,1)^N$ with $\frac{1}{N}\|m\|^2=q,$ if we denote
\begin{equation}
C(m):= \xi''(q)\! \int_{q}^1\gamma_m(s)\,ds  + \xi''(q) \Delta(m),
\end{equation}
then
\begin{align}
\nabla \nTAP^\infty(\mu_m) 
&= -\frac{1}{N}\Bigl(\oPsi(q,m_i,\gamma_{m})+C(m)m_i\Bigr)_{i\leq N}.
\label{eqTAPstates12}
\end{align}
\end{thm}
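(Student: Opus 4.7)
My approach is to apply the envelope theorem to the representation $\nTAP^\infty(\mu_m) = \inf_{\gamma\in\overline{\mathcal{N}}}\nTAP^\infty(\mu_m,\gamma)$, whose domain does not depend on $m$, so that given the uniqueness of the minimizer from Theorem \ref{zero:thm4} we have $\nabla\nTAP^\infty(\mu_m) = \nabla_m\nTAP^\infty(\mu_m,\gamma_m)$ with $\gamma_m$ held fixed. Writing $\nTAP^\infty(\mu_m,\gamma) = \frac{1}{N}\sum_i \Lambda_\gamma^\infty(q,m_i) - \frac{1}{2}\int_q^1 s\xi''(s)\gamma(s)\,ds$ with $q = \|m\|^2/N$, I differentiate via the chain rule. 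The envelope theorem for the concave conjugate \eqref{eqCCParPsi} gives $\partial_a\Lambda_\gamma^\infty(q,a) = -\oPsi(q,a,\gamma)$, while using the Parisi PDE \eqref{zero:eq2} and the identity $\partial_x\Theta_\gamma(q,\oPsi) = a$ yields $\partial_q\Lambda_\gamma^\infty(q,a) = -\frac{\xi''(q)}{2}\bigl[\partial_{xx}\Theta_\gamma(q,\oPsi(q,a,\gamma)) + \gamma(q)a^2\bigr]$. Combining these with $\partial_{m_i}q = 2m_i/N$ and $\frac{1}{N}\sum_j m_j^2 = q$, the $\gamma(q)\,q$-contributions cancel against the boundary term arising from the lower limit of the explicit integral, leaving
\begin{equation*}
\partial_{m_i}\nTAP^\infty(\mu_m) = -\frac{1}{N}\oPsi(q,m_i,\gamma_m) - \frac{m_i\,\xi''(q)}{N}\,A(q),
\end{equation*}
where $A(q) := \frac{1}{N}\sum_j \partial_{xx}\Theta_{\gamma_m}(q,\oPsi(q,m_j,\gamma_m))$. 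It then suffices to prove $A(q) = \Delta(m) + \int_q^1 \gamma_m(s)\,ds$.

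To establish this identity, I introduce the diffusions $dX^j_t = \xi''(t)\gamma_m(t)\partial_x\Theta_{\gamma_m}(t,X^j_t)\,dt + \xi''(t)^{1/2}\,dW^j_t$ with $X^j_q = \oPsi(q,m_j,\gamma_m)$, and extend $A$ to $[q,1]$ by $A(s) := \frac{1}{N}\sum_j \E[\partial_{xx}\Theta_{\gamma_m}(s,X^j_s)]$. It\^o's formula shows that $M^j_t := \partial_x\Theta_{\gamma_m}(t,X^j_t)$ is a bounded martingale with terminal value $\operatorname{sign}(X^j_1)$ and that $A'(s) = -\xi''(s)\gamma_m(s)B(s)$ where $B(s) := \frac{1}{N}\sum_j \E[(\partial_{xx}\Theta_{\gamma_m}(s,X^j_s))^2]$. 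Linearizing the Parisi PDE along a signed perturbation $\nu$ of $\gamma_m$, a Feynman-Kac representation gives $\frac{d}{d\epsilon}\nTAP^\infty(\mu_m,\gamma_m+\epsilon\nu)|_{\epsilon=0} = \frac{1}{2}\int_q^1 \xi''(s)\nu(s)\bigl[\frac{1}{N}\sum_j w_j(s) - s\bigr]\,ds$ with $w_j(s) := \E[M^j_s{}^2]$. By minimality of $\gamma_m$, the function $F(s) := \int_s^1 \xi''(t)[\frac{1}{N}\sum_j w_j(t) - t]\,dt$ is non-negative and vanishes on $\supp(d\gamma_m)$, which implies $\frac{1}{N}\sum_j w_j(s) = s$ on $\supp(d\gamma_m)$ and makes $\int_q^1 \xi''(s)\gamma_m(s)[\frac{1}{N}\sum_j w_j(s) - s]\,ds$ vanish since $\gamma_m$ is flat between components of its support. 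Using the quadratic-variation identity $w_j'(s) = \xi''(s)\E[(\partial_{xx}\Theta_{\gamma_m}(s,X^j_s))^2]$, I then rewrite $\xi''(s)\gamma_m(s)B(s)\,ds = \gamma_m(s)\,d\bigl(\frac{1}{N}\sum_j w_j(s)\bigr)$; a Stieltjes integration by parts using $\gamma_m(q)=0$ and the first-order condition yields $A(q) - A(1) = \int_q^1 \gamma_m(s)\,ds$.

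To identify $A(1) = \Delta(m)$, I apply It\^o to $\Theta_{\gamma_m}(t,X^j_t)$ and to $X^j_t M^j_t$ (whose terminal value is $|X^j_1|$), take expectations, and sum over $j$; after invoking the cancellation noted above this collapses to the clean identity $\nTAP^\infty(\mu_m) = \int_q^1 \xi''(s)A(s)\,ds$. The same Stieltjes integration by parts produces $A(s) - A(1) = \gamma_m(s)[s - \frac{1}{N}\sum_j w_j(s)] + \int_s^1 \gamma_m(r)\,dr$, and substituting this decomposition, applying Fubini to the pure integral contribution, and discarding the correction term (again vanishing by the integrated first-order condition), one obtains $\nTAP^\infty(\mu_m) = A(1)(\xi'(1)-\xi'(q)) + \int_q^1 (\xi'(r)-\xi'(q))\gamma_m(r)\,dr$, which solves to $A(1) = \Delta(m)$. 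The main technical obstacle is justifying the It\^o calculus rigorously near the non-smooth terminal data $\Theta_{\gamma_m}(1,x) = |x|$ (where $\partial_{xx}\Theta_{\gamma_m}$ is singular as $t \to 1$), together with the careful bookkeeping at atoms of $d\gamma_m$ to ensure that pointwise versus integrated forms of the first-order condition are used consistently; the standard route is to approximate $\gamma_m$ by finitely supported measures (as in the Ruelle cascade picture), verify the identities discretely, and pass to the limit using the continuity results of the paper.
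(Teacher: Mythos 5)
Your proposal takes a genuinely different route from the paper. The paper obtains \eqref{eqTAPstates12} by passing to the limit $\beta\to\infty$ in the already-proven positive-temperature gradient formula of Theorem~\ref{ThmGTElabbeta}, using the vague-convergence machinery of Proposition~\ref{add:prop1} and Corollary~\ref{add:cor1} to identify the limit of $\beta\zeta_{\beta,m}$ (the absolutely continuous part $\gamma_m$ plus an escaping point mass $\Delta(m)$ at the endpoint $1$), and then invoking uniform convergence of gradients on compacta to conclude. You instead carry out a direct zero-temperature computation via the envelope theorem and It\^o calculus. Your chain-rule reduction is correct and clean: the $\gamma(q)$-contributions from $\partial_q\Lambda^\infty_\gamma$ and from the lower endpoint of the explicit integral cancel via $\frac{1}{N}\sum_j m_j^2 = q$, leaving exactly the claim $A(q) = \Delta(m) + \int_q^1\gamma_m(s)\,ds$; and the identity $A(1)=\Delta(m)$ checks out by explicit Gaussian computation in the test case $\gamma_m\equiv 0$, which supports the overall structure. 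Such a proof, if completed, would be self-contained at zero temperature and would not rely on Theorem~\ref{ThmGTElabbeta}, which is an appealing feature.

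That said, two of the steps you flag as ``technical obstacles'' are genuinely substantial and not merely routine. First, the envelope-theorem step is not as automatic as stated: $\oN$ is not locally compact, and a perturbation of $m$ can cause minimizer mass to escape to $1$; justifying $\nabla\nTAP^\infty(\mu_m)=\nabla_m\nTAP^\infty(\mu_m,\gamma_m)$ requires continuity of $m\mapsto\gamma_m$ in a topology allowing for this escape, together with joint continuity of the partial gradient. This is precisely why the paper introduces the compactification $\bN_{q,1}$ and proves Proposition~\ref{add:prop1} and Corollary~\ref{add:cor1}; the work you would need here is essentially what Step~1 of the paper's proof performs. Second, the identification $A(1)=\Delta(m)$: since $\Theta_{\gamma_m}(1,x)=|x|$, the second derivative $\partial_{xx}\Theta_{\gamma_m}(s,\cdot)$ concentrates near the origin and blows up as $s\uparrow 1$, so $A(1)$ is only a limit, and the It\^o identities (including the zero-temperature analogue of the first-order condition $\frac{1}{N}\sum_j w_j(s)=s$ on $\supp d\gamma_m$, which the paper only proves at positive temperature) must all be pushed to this singular boundary. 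Your proposed repair---discretizing $\gamma_m$---does not remove this singularity, since the boundary datum $|x|$ is non-smooth for every $\gamma$, discrete or not. So the architecture is plausible and interesting, but both gaps require genuine new arguments that the proposal defers rather than resolves, and the paper's reduction to the positive-temperature theorem, where the boundary datum $\log 2\cosh$ is smooth, sidesteps exactly these difficulties.
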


If we combine (\ref{eqTAPstates1}) and (\ref{eqTAPstates12}), we can write
$$
(\nabla H_N(m))_i-C(m)m_i=\oPsi(q,m_i,\gamma_{m}).
$$
If we plug both sides into $\partial_x\Theta_{\gamma_{m}}(q,\,\cdot\,)$ and recall the definition of $\oPsi$, we get
\begin{equation}
\partial_x\Theta_{\gamma_{m}}\Bigl(q,(\nabla H_N(m))_i-C(m)m_i\Bigr)
= m_i.
\label{exactTAP}
\end{equation}
These are the TAP equations at zero temperature.

\section{Passing to zero temperature}

Some of the zero temperature results above can be proved by adapting the proofs from \cite{GenTAP} to the zero-temperature setting. This, however, entails a rather involved and long analysis. Instead, the approach we shall take here is to relate the zero-temperature variants to the results proved  for positive temperature in \cite{GenTAP}, and use those as much as possible.
The main result of this section is Lemma \ref{zero:lem1} below, that bounds,
for a given empirical measure $\mu$, the difference between the functional $\nTAP^\beta(\mu)$ (see \eqref{eqTAPfirst}) at a given positive temperature and the zero-temperature functional $\nTAP^\infty(\mu)$ (see \eqref{eqTAPinfty}).
It will allow us to reduce zero-temperature results to the positive temperature results in the previous section.
We first prove the following simple consequence of Theorem \ref{thm:TAPcorrection}, that bounds  the 
difference of the functional $\nTAP^\beta(\mu)$ at two different temperatures.
\begin{lem}
For any $0<\beta_1\leq \beta_2$ and $\mu\in M_*$,
\begin{equation}\label{eqTAPBcomp}
\Bigl|\frac{1}{\beta_1}\nTAP^{\beta_1}(\mu)-\frac{1}{\beta_2}\nTAP^{\beta_2}(\mu)\Bigr|\leq \frac{\log 2}{\beta_1}.
\end{equation}
\end{lem}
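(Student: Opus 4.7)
The strategy is to establish the inequality first for empirical measures $\mu=\mu_m$ by combining a pointwise (in the disorder) monotonicity of the finite-volume functional $\nTAP^\beta_{N,n}(m,\eps,\delta)$ in $\beta$ with the uniform concentration in Theorem \ref{thm:TAPcorrection}, and then to extend it to a general $\mu\in M_*$ using density of empirical measures together with the continuity of $\nTAP^\beta(\,\cdot\,)$ on $M_*$ recorded in the text.

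Fix a realization of the disorder and $m\in [-1,1]^N$, and set
\begin{equation*}
G(\beta):=\frac{1}{\beta}\nTAP^\beta_{N,n}(m,\eps,\delta)=\frac{1}{\beta nN}\log\sum_{B_n(m,\eps,\delta)}\exp\Bigl(\beta\sum_{i=1}^n\bigl(H_N(\bs^i)-H_N(m)\bigr)\Bigr).
\end{equation*}
A standard log-partition-function differentiation gives
\begin{equation*}
G'(\beta)=-\frac{S(\beta)}{\beta^2 nN},
\end{equation*}
where $S(\beta)\in[0,nN\log 2]$ is the Shannon entropy of the Gibbs measure on $B_n(m,\eps,\delta)$ with Hamiltonian $\sum_{i=1}^n(H_N(\bs^i)-H_N(m))$ at inverse temperature $\beta$ (the upper bound comes from $|B_n(m,\eps,\delta)|\leq 2^{nN}$). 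Integrating over $[\beta_1,\beta_2]$,
\begin{equation*}
0\leq G(\beta_1)-G(\beta_2)\leq \log 2\cdot\Bigl(\frac{1}{\beta_1}-\frac{1}{\beta_2}\Bigr)\leq \frac{\log 2}{\beta_1}.
\end{equation*}

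To pass this to $\nTAP^\beta(\mu)$, invoke Theorem \ref{thm:TAPcorrection} simultaneously at $\beta_1$ and $\beta_2$. For any $t>0$, one may choose a common triple $(\eps,\delta,n)$ (taking the more restrictive of the two) such that, for large $N$, with probability at least $1-2e^{-cN}$,
\begin{equation*}
\bigl|\nTAP^{\beta_i}_{N,n}(m,\eps,\delta)-\nTAP^{\beta_i}(\mu_m)\bigr|<t,\quad i=1,2,
\end{equation*}
uniformly in $m\in[-1,1]^N$. Together with the bound on $G(\beta_1)-G(\beta_2)$, this yields
\begin{equation*}
\Bigl|\frac{1}{\beta_1}\nTAP^{\beta_1}(\mu_m)-\frac{1}{\beta_2}\nTAP^{\beta_2}(\mu_m)\Bigr|\leq \frac{\log 2}{\beta_1}+\frac{t}{\beta_1}+\frac{t}{\beta_2}
\end{equation*}
uniformly in $m$ on this event. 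For a general $\mu\in M_*$, pick $m^{(N)}\in[-1,1]^N$ with $\mu_{m^{(N)}}\to\mu$ weakly (say, via the quantile construction $m^{(N)}_i=F_\mu^{-1}((i-\tfrac12)/N)$). Since the good event has probability tending to one, for each large $N$ there exists a realization of $H_N$ on which the previous display holds with $m=m^{(N)}$; letting $N\to\infty$ and using continuity of $\nTAP^{\beta_i}$ on $M_*$, followed by $t\to 0$, gives the claim.

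There is no serious obstacle here; the only mildly subtle point is the joint choice of parameters $(\eps,\delta,n)$ for the two temperatures in Theorem \ref{thm:TAPcorrection}, which is automatic since one may pick the more restrictive of the two prescriptions.
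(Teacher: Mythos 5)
Your proof is correct and takes essentially the same approach as the paper: reduce to empirical measures $\mu_m$ via Theorem~\ref{thm:TAPcorrection} applied at both temperatures, prove a $\log 2/\beta_1$ bound on the difference of the finite-$N$ quantities (you do this through the entropy derivative $G'(\beta)=-S(\beta)/(\beta^2 nN)$ with $S(\beta)\leq nN\log 2$, while the paper sandwiches $\tfrac{1}{\beta}\nTAP^\beta_{N,n}$ between $\nTAP^\infty_{N,n}$ and $\nTAP^\infty_{N,n}+\tfrac{\log 2}{\beta}$ and works with expectations plus Gaussian concentration --- both incarnations of the same $\log|B_n|\leq nN\log 2$ bound), and then extend from $\mu_m$ to general $\mu$ by density and the continuity of $\nTAP^\beta$ on $M_*$. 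The argument is sound and the minor difference in how the finite-$N$ comparison is phrased is cosmetic.
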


\begin{proof}
For a fixed $t>0$, by (\ref{eq:TAPunifconvbeta}) and Gaussian concentration,
$$
\big|\e \nTAP^{\beta_j}_{N,n}(m,\eps,\delta)  -  \nTAP^{\beta_j}(\mu_m) \big| < 2 t
$$
for $j=1,2,$ for large enough $N$. On the other hand,
$$
\e \nTAP^{\infty}_{N,n}(m,\eps,\delta) 
\leq
\frac{1}{\beta_j}\e \nTAP^{\beta_j}_{N,n}(m,\eps,\delta) 
\leq 
\e \nTAP^{\infty}_{N,n}(m,\eps,\delta) +\frac{\log 2}{\beta_j}
$$
and, therefore,
$$
\Bigl| \frac{1}{\beta_1}\e \nTAP^{\beta_1}_{N,n}(m,\eps,\delta)  
- \frac{1}{\beta_2}\e \nTAP^{\beta_2}_{N,n}(m,\eps,\delta) \Bigr| 
\leq \frac{\log 2}{\beta_1}.
$$
This implies that
$$
\Bigl| \frac{1}{\beta_1}\nTAP^{\beta_1}(\mu_m) - \frac{1}{\beta_2}\nTAP^{\beta_2}(\mu_m)\Bigr| 
\leq  \frac{\log 2+4t}{\beta_1}.
$$
Choosing $m=m^N$ so that $\mu_m\to\mu$ and using continuity of $\nTAP^\beta$ proves the same inequality for arbitrary $\mu\in M_*$. Since $t$ is arbitrary, we get (\ref{eqTAPBcomp}).
\end{proof}

Let us denote an $L^1$-distance on $\nN$ by
$$
d_1(\gamma,\gamma'):=\int_q^1|\gamma(s)-\gamma'(s)|\,ds.
$$ 
It was proved in \cite[Corollary 2]{AC15} and \cite[Proposition 2]{CHL} that 
\begin{equation}
\Bigl|\Theta_\gamma(t,x)- \Theta_{\gamma'}(t,x)\Bigr|
\leq 2\xi''(1)d_1(\gamma,\gamma').
\label{eqLipInf}
\end{equation}
Since
\begin{equation}
\Bigl| \Lambda_\gamma^\infty(q,a)-\Lambda_{\gamma'}^\infty(q,a)\Bigr|
\leq
\sup_x \Bigl|\Theta_\gamma(t,x)- \Theta_{\gamma'}(t,x)\Bigr|
\leq 2\xi''(1)d_1(\gamma,\gamma'),
\label{eqLipLamInf}
\end{equation}
we get that
\begin{equation}
\Bigl|\nTAP^\infty(\mu,\gamma)-\nTAP^\infty(\mu,\gamma')\Bigr|
\leq 3\xi''(1)d_1(\gamma,\gamma').
\label{eqLipInfTAP}
\end{equation}
Hence, $\gamma\to\nTAP^\infty(\mu,\gamma)$ is Lipschitz on $(\nN,d_1)$, which will be useful in the proof of our next result.
\begin{lem}\label{zero:lem1}
	For any $\beta>0$ and $\mu\in M_*,$ we have that
	\begin{align}
    \Bigl|\nTAP^\infty(\mu)-\frac{1}{\beta}\nTAP^\beta(\mu)\Bigr|\leq \frac{\log 2}{\beta}.
    \label{zero:eq4}
	\end{align}
	\end{lem}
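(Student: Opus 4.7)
The plan is to prove both directions of the inequality by a PDE-level scaling that converts the positive-temperature Parisi PDE \eqref{ParisiPDEOrig} into the zero-temperature PDE \eqref{zero:eq2} at the cost of a $\log 2/\beta$ remainder. For each $\zeta\in\mathcal{M}_{0,1}$ I would introduce $\tilde\Phi_\beta(t,x):=\beta^{-1}\Phi_\zeta^\beta(t,\beta x)$ and verify by the chain rule that $\tilde\Phi_\beta$ satisfies \eqref{zero:eq2} with coefficient $\gamma:=\beta\zeta$, but with boundary $\tilde\Phi_\beta(1,x)=\beta^{-1}\log 2\cosh(\beta x)$ in place of $|x|$. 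Since $|x|\leq \beta^{-1}\log 2\cosh(\beta x)\leq |x|+\log 2/\beta$ uniformly, the standard comparison principle for this quasilinear PDE (the same tool underlying \eqref{eqLipInf}) gives $|\tilde\Phi_\beta(q,x)-\Theta_\gamma(q,x)|\leq \log 2/\beta$. Substituting $x=\beta y$ in \eqref{eqCCPar} then yields $\beta^{-1}\Lambda_\zeta^\beta(q,a)=\inf_y(\tilde\Phi_\beta(q,y)-ay)$, so $|\beta^{-1}\Lambda_\zeta^\beta(q,a)-\Lambda_\gamma^\infty(q,a)|\leq \log 2/\beta$, and because the quadratic terms in \eqref{eqTAPfirstZeta} and in the definition of $\nTAP^\infty(\mu,\gamma)$ agree under $\gamma=\beta\zeta$, integration against $\mu$ produces the key identity
\begin{equation}\label{eqKeyPlan}
\Bigl|\beta^{-1}\nTAP^\beta(\mu,\zeta)-\nTAP^\infty(\mu,\beta\zeta)\Bigr|\leq \log 2/\beta\quad\text{for every }\zeta\in\mathcal{M}_{0,1}.
\end{equation}

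The lower bound $\beta^{-1}\nTAP^\beta(\mu)\geq \nTAP^\infty(\mu)-\log 2/\beta$ is then immediate by taking $\inf_\zeta$ in \eqref{eqKeyPlan} and noting $\{\beta\zeta\}\subset\nN$. The reverse inequality is the substantive content: the correspondence $\zeta\mapsto \beta\zeta$ from $\mathcal{M}_{0,1}$ into $\nN$ only covers measures of total mass $\beta$, so for a general $\gamma\in\nN$ one cannot directly take $\zeta=\gamma/\beta$. To get around this, for $\gamma\in\nN$ with $\gamma(1^-)\leq \beta$ I would set $\zeta:=\gamma/\beta$ on $[0,1)$ together with a point mass $1-\gamma(1^-)/\beta$ at $s=1$, so that $\zeta\in\mathcal{M}_{0,1}$. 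An atom of $\beta\zeta$ at $s=1$ changes neither $\Theta_{\beta\zeta}$ (the PDE \eqref{zero:eq2} is driven on $[0,1)\times\R$ with its boundary already fixed at $t=1$) nor the Lebesgue integral $\int_q^1 s\xi''(s)\gamma(s)\,ds$, so $\nTAP^\infty(\mu,\beta\zeta)=\nTAP^\infty(\mu,\gamma)$, and \eqref{eqKeyPlan} yields $\beta^{-1}\nTAP^\beta(\mu)\leq \nTAP^\infty(\mu,\gamma)+\log 2/\beta$. Approximating an arbitrary $\gamma\in\nN$ by truncations $\gamma^{(M)}:=\min(\gamma,M)$ (which converge to $\gamma$ in $L^1$ by dominated convergence) and using the Lipschitz estimate \eqref{eqLipInfTAP} shows $\inf\{\nTAP^\infty(\mu,\gamma):\gamma(1^-)<\infty\}=\nTAP^\infty(\mu)$, which yields the upper bound whenever $\beta$ dominates the truncation height.

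To extend the upper bound to every $\beta>0$, I would invoke the preceding lemma \eqref{eqTAPBcomp}: for any $\beta'\geq \beta$ it gives $\beta^{-1}\nTAP^\beta(\mu)\leq (\beta')^{-1}\nTAP^{\beta'}(\mu)+\log 2/\beta$, and combining with the upper bound just established for large $\beta'$ and letting $\beta'\to\infty$ yields $\beta^{-1}\nTAP^\beta(\mu)\leq \nTAP^\infty(\mu)+\log 2/\beta$. The main obstacle is precisely this upper bound: the probability-measure constraint in $\mathcal{M}_{0,1}$ over which the positive-temperature infimum runs is strictly stronger than the finite-$L^1$ constraint defining $\nN$, and the PDE scaling alone does not traverse this gap; the atom-at-$s=1$ construction, $L^1$-truncation and the preceding lemma together are what close the argument.
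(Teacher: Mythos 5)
Your proposal is correct and follows essentially the same route as the paper: the rescaling $\beta^{-1}\Phi_\zeta^\beta(t,\beta x)$, the comparison of boundary conditions $|x|$ versus $\beta^{-1}\log 2\cosh(\beta x)$ to get the key sandwich between $\beta^{-1}\nTAP^\beta(\mu,\zeta)$ and $\nTAP^\infty(\mu,\beta\zeta)$, the observation that the image of $\mathcal{M}_{q,1}$ under $\zeta\mapsto\beta\zeta$ covers measures of mass at most $\beta$ (your atom-at-$1$ construction; the paper's $\nN^\beta$), and then letting $\beta\uparrow\infty$ via the Lipschitz estimate and the preceding lemma \eqref{eqTAPBcomp}. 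The paper records the slightly sharper one-sided inequalities in \eqref{zero:eq1} rather than your symmetric $|\cdot|\le\log 2/\beta$, but this does not change the argument.
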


\begin{proof}
With $q=\int\! a^2\,d\mu(a)$, let $\zeta \in\mathcal{M}_{q,1}$. If we make the change of variables
	\begin{align*}
	\Theta_{\beta\zeta}^\beta(t,x):=\frac{1}{\beta}\Phi_{\zeta}^\beta(t,\beta x),
	\end{align*}
it is easy to check that 
\begin{align*}
\partial_t\Theta_{\beta\zeta}^\beta=-\frac{\xi''(t)}{2}\Bigl(\partial_{xx}\Theta_{\beta\zeta}^\beta+\beta \zeta(t)\bigl(\partial_x\Theta_{\beta\zeta}^\beta\bigr)^2\Bigr)
\end{align*}
with the boundary condition 
$$
\Theta_{\beta\zeta}^\beta(1,x)=\frac{1}{\beta}\log 2\cosh (\beta x).
$$
Standard properties of the Parisi functional $\Phi_{\zeta}^\beta$ extend to $\Theta_{\zeta}^\beta$.  For example, it is well-known that changing the boundary condition in the definition of $\Phi_{\zeta}^\beta$ by at most a constant changes the solution by at most this constant, so the same holds  for $\Theta_{\zeta}^\beta$. Observe that
\begin{align*}
|x|\leq \frac{1}{\beta}\log 2\cosh (\beta x)\leq |x|+\frac{\log 2}{\beta}.
\end{align*}
Since $\Theta_{\beta\zeta}^\beta$ and $\Theta_{\beta\zeta}$ in (\ref{zero:eq2}) only differ in the boundary conditions, which differ by at most $\log2/\beta,$ we get
\begin{align}\label{zt:add:eq1}
\Theta_{\beta\zeta}(q,x)\leq \Theta_{\beta\zeta}^\beta(q,x)\leq \Theta_{\beta\zeta}(q,x)+\frac{\log 2}{\beta}.
\end{align}
Using this together with
\begin{align*}
\inf_{x\in \mathbb{R}}\Bigl(\Theta_{\beta\zeta}^\beta(q,x)-ax\Bigr)
&=\inf_{x\in \mathbb{R}}\Bigl(\frac{1}{\beta}\Phi_{\zeta}^\beta(q,\beta x)-ax\Bigr)
\\
&=\frac{1}{\beta}\inf_{x\in \mathbb{R}}\Bigl(\Phi_{\zeta}^\beta(q,x)-ax\Bigr)
=
\frac{1}{\beta}\Lambda_{\zeta}^\beta (q,a)
\end{align*}
 implies that
\begin{align}
\Lambda^\infty_{\beta\zeta} (q,a)\leq 
\frac{1}{\beta} \Lambda_{\zeta}^\beta (q,a)
\leq \Lambda_{\beta\zeta}^\infty (q,a)+\frac{\log 2}{\beta}.
\label{eqLambdaRel}
\end{align}
Note also that 
\begin{align*}
\frac{1}{\beta}\frac{\beta^2}{2}\int_q^1\! s\xi''(s)\zeta(s)\,ds=\frac{1}{2}\int_q^1\! s\xi''(s)\bigl(\beta\zeta(s)\bigr)\,ds.
\end{align*}
Combining the last two displays, we get
\begin{align}\label{zero:eq1}
\nTAP^{\infty}(\mu,\beta\zeta)
\leq
\frac{1}{\beta}\nTAP^{\beta}(\mu,\zeta)
\leq
\nTAP^{\infty}(\mu,\beta\zeta) + \frac{\log 2}{\beta}.
\end{align}
If we denote by $\nN^\beta$ the set of all measures on $[q,1]$ of total mass at most $\beta$, taking infimum over all $\zeta\in \mathcal{M}_{q,1}$ gives
\begin{align*}
\inf_{\gamma\in \nN^\beta}\nTAP^{\infty}(\mu,\gamma)
\leq
\frac{1}{\beta}\nTAP^{\beta}(\mu)
\leq 
\inf_{\gamma\in \nN^\beta}\nTAP^{\infty}(\mu,\gamma)+ \frac{\log 2}{\beta}.
\end{align*}
As $\beta\uparrow\infty,$ by (\ref{eqLipInfTAP}), the infimum over $\gamma\in \nN^\beta$ converges to the infimum over all $\gamma\in \nN,$ and using (\ref{eqTAPBcomp}) finishes the proof.
\end{proof}

\begin{remark}\label{lalRemarkPM1}\rm
It was shown in the proof of Theorem 10 $(i)$ in \cite{GenTAP} that
$$
\Lambda_{\zeta}^\beta (q,\pm 1)=\frac{\beta^2}{2}\int_q^1\!\xi''(s)\zeta(s)\,ds,
$$
which together with (\ref{eqLambdaRel}) implies that
$$
\Bigl|
\Lambda_{\beta\zeta}^\infty (q,\pm 1)-\frac{1}{2}\int_q^1\!\xi''(s)(\beta\zeta(s))\,ds
\Bigr|\leq \frac{\log 2}{\beta}.
$$
By (\ref{eqLipLamInf}), it follows that
\begin{equation}
\Lambda_{\gamma}^\infty (q,\pm 1)=\frac{1}{2}\int_q^1\!\xi''(s)\gamma(s)\,ds
\end{equation}
for all $\gamma\in \nN.$
\qed
\end{remark}

\section{TAP correction and representation}

In this section we combine Lemma \ref{zero:lem1} from the previous section and Theorems \ref{thm:TAPcorrection} and \ref{thm:GenTAP}, which concern the positive temperature case, to prove their zero-temperature analogues, 
Theorems \ref{ThmZTut} and \ref{ThmTAPrepZT}.

\subsection{Proof of Theorem \ref{ThmZTut}}
Note that
$$
\nTAP^{\infty}_{N,n}(m,\eps,\delta) 
\leq
\frac{1}{\beta} \nTAP^{\beta}_{N,n}(m,\eps,\delta) 
\leq 
 \nTAP^{\infty}_{N,n}(m,\eps,\delta) +\frac{\log 2}{\beta}.
$$
Together with Lemma \ref{zero:lem1} this implies that
\begin{align*}
\bigl|\nTAP_{N,n}^\infty(m,\varepsilon,\delta)-\nTAP^{\infty}(\mu_m)\bigr|
&\leq 
\frac{1}{\beta}\Bigl|\nTAP_{N,n}^\beta(m,\varepsilon,\delta)-\nTAP^{\beta}(\mu_m)\Bigr|+\frac{2\log 2}{\beta}.
\end{align*}
This implies that the probability on the left-hand side of (\ref{eqThmZTut}) is bounded from below by
$$
\p\Bigl(\forall m\in [-1,1]^N:\bigl|\nTAP_{N,n}^\beta(m,\varepsilon,\delta)-\nTAP^{\beta}(\mu_m)\bigr|<\beta t-2\log 2\Bigr).
$$
If we take $\beta$ large enough so that $\beta t>2\log 2$ then our claim follows from Theorem \ref{thm:TAPcorrection}.\qed

\subsection{Proof of Theorem \ref{ThmTAPrepZT}}

Recall that it was proved in \cite{AC15} that if we denote by $\zeta_{\beta}^*\in \mathcal{M}_{0,1}$ the minimizer of the Parisi formula for the free energy $F_N(\beta)$ of the original model, then $\beta \zeta_{\beta}^*\rightarrow \gamma^*$ vaguely, i.e., for any continuous function $f$ with compact support in $[0,1)$, 
\begin{align*}
\lim_{\beta\rightarrow\infty}\int_{[0,1)}\! f(s)\,d\beta\zeta_{\beta}^*(s)=\int_{[0,1)}\! f(s)\,d\gamma^*(s).
\end{align*} 
Fix $q$ in the support of $\gamma^*$. Then there exists $q_\beta$ in the support of $\zeta_{\beta}^*$ such that $q_\beta\to q$ as $\beta\to \infty.$ Note that
\begin{align*}
\Bigl|F_N(\beta)-\max_{\bs\in \Sigma_N}\frac{H_N(\bs)}{N}\Bigr|\leq \frac{\log 2}{\beta}
\end{align*}
and, from \eqref{zero:eq4},
\begin{align*}
\Bigl|\frac{1}{\beta}\nTAP^\beta(\mu)-\nTAP^\infty(\mu)\Bigr|\leq \frac{\log 2}{\beta}.
\end{align*}
From these,
\begin{align}
\begin{split}\label{zero:ThmTAPrepZT:proof:eq4}
&\Bigl|\Bigl(F_N(\beta)-\max_{m\in S_N(q_\beta)}\Bigl(\frac{H_N(m)}{N}+\frac{1}{\beta}\nTAP^\beta(\mu_m)\Bigr)\Bigr)\\
&\qquad-\Bigl(\max_{\bs\in \Sigma_N}\frac{H_N(\bs)}{N}-\max_{m\in S_N(q_\beta)}\Bigl(\frac{H_N(m)}{N}+\nTAP^\infty(\mu_m)\Bigr)\Bigr)\Bigr|\leq \frac{2\log 2}{\beta}.
\end{split}
\end{align}
To handle the second big bracket, observe that since $\nTAP^\beta$ is continuous, it follows from \eqref{zero:eq4} that $\nTAP^\infty $ is uniformly continuous on $M_*$, since $M_*$ is compact. Hence, for any $\varepsilon>0,$ there exists $0<\delta<\min(\varepsilon,(1-q)/2)$ such that $|\nTAP^\infty(\mu)-\nTAP^\infty(\mu')|<\varepsilon$ whenever $\mu,\mu'\in M_*$ satisfy $d_1(\mu,\mu')\leq \delta.$ From now on, we fix $\beta$ large enough so that $q_\beta\in [q-\delta,q+\delta].$
Note that for any $m\in [-1,1]^N$ with $\|m\|^2/N=q$, we can find $ m' $ with $\|m'\|^2/N=q_\beta$ such that $\|m- m' \|\leq \delta\sqrt{N}.$ Furthermore, we can choose $ m' $ so that the absolute values of the coordinates of $m$ and $m'$ are arranged in the same order,
$$
|m_{i_1}|\leq |m_{i_2}|\leq \cdots\leq |m_{i_N}|\Longrightarrow
|m_{i_1}'|\leq |m_{i_2}'|\leq \cdots\leq |m_{i_N}'|.
$$ 
If $\mu_{|m|}:=\frac{1}{N}\sum_{i\leq N}\delta_{|m_i|}$ then
\begin{align*}
d_1(\mu_{|m|},\mu_{| m' |})=\frac{1}{N}\sum_{i=1}^N\bigl||m_i|-|m_{i}' |\bigr|\leq \frac{1}{N}\sum_{i=1}^N\bigl|m_i-m_{i}' \bigr|\leq \frac{\|m- m' \|}{\sqrt{N}}\leq \delta.
\end{align*}
Hence, from the above uniform continuity,
\begin{align}\label{zero:ThmTAPrepZT:proof:eq2}
\bigl|\nTAP^\infty(\mu_{m})-\nTAP^\infty(\mu_{ m' })\bigr|=\bigl|\nTAP^\infty(\mu_{|m|})-\nTAP^\infty(\mu_{| m' |})\bigr|<\varepsilon.
\end{align}
In a similar manner, for any $m\in [-1,1]^N$ with $\|m\|^2/N=q_\beta$, we can find $m' \in [-1,1]^N$ with $\|m' \|^2/N=q$ so that $\|m-m' \|\leq \delta\sqrt{N}$ and
\begin{align}\label{zero:ThmTAPrepZT:proof:eq3}
\bigl|\nTAP^\infty(\mu_{m})-\nTAP^\infty(\mu_{m' })\bigr|=\bigl|\nTAP^\infty(\mu_{|m|})-\nTAP^\infty(\mu_{|m' |})\bigr|<\varepsilon.
\end{align}
On the other hand, from the Dudley entropy integral formula, there exists a constant $C>0$ depending only on $\xi$ such that
\begin{align*}
\e\max_{\|m-m'\|<\delta\sqrt{N}}\Bigl|\frac{H_N(m)}{N}-\frac{H_N(m')}{N}\Bigr|\leq C\delta,
\end{align*}
which, combined with the Gaussian concentration inequality, implies that
\begin{align*}
\max_{\|m-m'\|<\delta\sqrt{N}}\Bigl|\frac{H_N(m)}{N}-\frac{H_N(m')}{N}\Bigr|\leq 2C\delta
\end{align*}
with probability at least $1-C'e^{-C'\delta N}$, where $C'$ is a constant depending only on $\xi.$ Hence, from this inequality and \eqref{zero:ThmTAPrepZT:proof:eq2},
\begin{align*}
\Bigl|\max_{m\in S_N(q)}\Bigl(\frac{H_N(m)}{N}+\nTAP^\infty(\mu_m)\Bigr)-\max_{m\in S_N(q_\beta)}\Bigl(\frac{H_N(m)}{N}+\nTAP^\infty(\mu_{m})\Bigr)\Bigr|\leq \varepsilon(1+2C)
\end{align*}
with probability at least $1-2C'e^{-C'\delta N}$. Thus, from \eqref{zero:ThmTAPrepZT:proof:eq4},
\begin{align*}
&\Bigl|\Bigl(F_N(\beta)-\max_{m\in S_N(q_\beta)}\Bigl(\frac{H_N(m)}{N}+\frac{1}{\beta}\nTAP^\beta(\mu_m)\Bigr)\Bigr)\\
&\qquad-\Bigl(\max_{\bs\in \Sigma_N}\frac{H_N(\bs)}{N}-\max_{m\in S_N(q)}\Bigl(\frac{H_N(m)}{N}+\nTAP^\infty(\mu_m)\Bigr)\Bigr)\Bigr|\leq \frac{2\log 2}{\beta}+\varepsilon(1+2C).
\end{align*}
Our result then follows by using Theorem \ref{thm:GenTAP}.  
\qed

\section{Ancestral property of TAP states}

This section is dedicated to the proof of Theorem \ref{ThmAncZT}. Unlike in the previous section, here we work at zero-temperature directly.
First, note that since $\nTAP^\infty(\mu_m)\leq \nTAP^\infty(\mu_m,\gamma^*)$, using Theorem \ref{ThmTAPrepZT} and Gaussian concentration, our proof will be complete if we can show that, whenever $q$ lies in the support of $\gamma^*,$
	\begin{align*}
		\limsup_{N\rightarrow\infty}\e \max_{m\in S_N(q)}\Bigl(\frac{H_N(m)}{N}+\nTAP^\infty(\mu_m,\gamma^*)\Bigr)&\leq \mathcal{P}^\infty(\gamma^*).
	\end{align*}
Let $\mathcal{N}_{0,q}$ be the space of all cumulative distribution functions $\gamma$ induced by positive measures on $[0,q]$ satisfying $\int_0^q\gamma(s)\,ds <\infty.$ From Guerra's RSB bound for the ground state energy,
	\begin{align*}
	&\lim_{N\rightarrow\infty}\e\max_{m\in S_N(q)}\Bigl(\frac{H_N(m)}{N}+\nTAP^\infty(\mu_m,\gamma^*)\Bigr)\\
	&= 	\lim_{N\rightarrow\infty}\e\max_{m\in S_N(q)}\Bigl(\frac{H_N(m)}{N}+\frac{1}{N}\sum_{i=1}^N\Lambda_{\gamma^*}^\infty (q,m_i)-\frac{1}{2}\int_q^1s\xi''(s)\gamma^*(s)\,ds \Bigr)\\
	&\leq \mathcal{P}_q(\lambda,\gamma)-\frac{1}{2}\int_q^1s\xi''(s)\gamma^*(s)\,ds 
	\end{align*}
	for any $\gamma\in \mathcal{N}_{0,q}$, where
	\begin{align*}
	\mathcal{P}_q(\lambda,\gamma):=\Theta_{\gamma}^\lambda(0,0)-\frac{1}{2}\int_0^qs\xi''(s)\gamma(s)\,ds ,
	\end{align*}
	and where $\Theta_{\gamma}^\lambda$ is the solution to
	\begin{align*}
	\partial_t\Theta_\gamma^\lambda=-\frac{\xi''(t)}{2}\Bigl(\partial_{xx}\Theta_\gamma^\lambda+\gamma(t)\bigl(\partial_x\Theta_\gamma^\lambda\bigr)^2\Bigr)
	\end{align*}
	on $[0,q]\times\mathbb{R}$ with the boundary condition 
	$$
	\Theta_\gamma^\lambda(q,x):=\max_{a\in [-1,1]}\Bigl(ax+\lambda(a-q)+\Lambda_{\gamma^*}^\infty(q,a)\Bigr).
	$$
	If now we take $\lambda=0$ and $\gamma=\gamma^*1_{[0,q]},$ then from the conjugation, 
	$
		\Theta_\gamma^\lambda(q,x)=\Theta_{\gamma^*}(q,x)
	$ and thus, $\Theta_{\gamma}^\lambda(0,0)=\Theta_{\gamma^*}(0,0).$
	As a consequence,
	\begin{align*}
	\mathcal{P}_q^\infty(\lambda,\gamma)-\frac{1}{2}\int_q^1s\xi''(s)\gamma^*(s)\,ds &=\mathcal{P}^\infty(\gamma^*).
	\end{align*}
	This finishes our proof.
\qed

\section{Continuity of the Parisi functional}

In this section we will prove that the Parisi functional is continuous when defined on an extension of $\nN$ to measures that charge the point $1$. Namely, we set $\bN_{q,1}$ to be the collection of all measures on $[0,1]$ of the form 
\begin{equation}\label{eq5.1}
\nu(A)=\int_{A}\!\gamma(s)\,ds+\Delta \delta_1(A),
\end{equation}
where $\gamma([0,q))=0$, $\gamma|_{[q,1)}\in \nN$, and $\Delta\in [0,\infty).$ We equip $\bN_{q,1}$ with the topology of vague convergence.

\begin{remark}\rm
	Note that if $\nu_n\in \cup_{q\in [0,1]}{\bN}_{q,1}$ converges vaguely to certain $\nu_0\in\cup_{q\in [0,1]}{\bN}_{q,1}$, then $\gamma_n$ converges to $\gamma_0$ a.e. on $[0,1)$, where $(\gamma_n,\Delta_n)$ for $n\geq 0$ are the pairs associated to $\nu_n$ on $[0,1).$ Indeed, this can be seen by noting that $\nu_n([0,\cdot])$ for $n\geq 0$ are convex functions on $[0,1)$ and that $\lim_{n\to\infty}\nu_n([0,s])=\nu_0([0,s])$ for all $s\in [0,1)$ due to the vague convergence of $\nu_n$ and the fact that $\nu_0([0,\cdot])$ is continuous on $[0,1).$ Since $\nu_n([0,\cdot])$ for $n\geq 0$ are almost surely differentiable, we see that at the points of simultaneous differentiability of $\nu_n([0,s])$ for $n\geq 0,$ the Griffith lemma (see, e.g., \cite{Talbook1}) implies
	$$
	\lim_{n\to\infty}\gamma_n(s)=\lim_{n\to\infty}\frac{d}{ds}\nu_0([0,s])=\frac{d}{ds}\nu_0([0,s])=\gamma_0(s).
	$$
	We also mention that it is not necessarily true that $\Delta_n\to\Delta_0,$ instead the following limit is valid $$
	\Delta_0=\lim_{n\to\infty}\nu_n([0,1])-\int_0^1\gamma_0(s)ds.
	$$
\end{remark}

For each $\nu\in \bN_{q,1}$, if $(\gamma,\Delta)$ is the pair associated with $\nu$, we define 
$$
\Theta_{\nu}(q,x):=\Theta_{\gamma}(q,x)+\frac{\Delta \xi''(1)}{2}.
$$
In addition, we also define, for $a\in [-1,1],$
\begin{align*}
\Lambda_{\nu}^\infty(q,a):=\inf_{x\in \mathbb{R}}\Bigl(\Theta_\nu(q,x)-ax\Bigr)
=\Lambda_\gamma^\infty(q,a)+\frac{\Delta\xi''(1)}{2}.
\end{align*}
The main result of this section is the following proposition, which establishes the continuity of $\Theta_{\nu}(q,\cdot)$. It will be used in the proof of Theorems \ref{zero:thm4} and \ref{ThmGTElab}.

\begin{prop}\label{add:prop1}
	For any $q\in [0,1]$, if $\nu_n\to \nu_0$ vaguely in $\bN_{q,1}$ then 
	\begin{align}\label{add:prop1:eq1}
	\lim_{n\rightarrow\infty}\sup_{x\in \mathbb{R}}\bigl|\Theta_{\nu_n}(q,x)-\Theta_{\nu_0}(q,x)\bigr|=0
	\end{align}
	and
	\begin{align}\label{add:prop1:eq2}
	\lim_{n\rightarrow\infty}\sup_{a\in [-1,1]}\bigl|\Lambda^\infty_{\nu_n}(q,a)-\Lambda^\infty_{\nu_0}(q,a)\bigr|=0.
	\end{align}
\end{prop}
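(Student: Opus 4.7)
The convergence \eqref{add:prop1:eq2} is an immediate consequence of \eqref{add:prop1:eq1}: applying the elementary bound $|\inf_x h_1(x)-\inf_x h_2(x)|\le\sup_x|h_1(x)-h_2(x)|$ to $h_i(x)=\Theta_{\nu_i}(q,x)-ax$, the linear term $ax$ cancels on the right, yielding
\begin{equation*}
\sup_{a\in[-1,1]}\bigl|\Lambda^\infty_{\nu_n}(q,a)-\Lambda^\infty_{\nu_0}(q,a)\bigr|\le\sup_{x\in\mathbb{R}}\bigl|\Theta_{\nu_n}(q,x)-\Theta_{\nu_0}(q,x)\bigr|,
\end{equation*}
so I focus on \eqref{add:prop1:eq1}.

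For \eqref{add:prop1:eq1}, my plan is to combine three ingredients. First, standard maximum-principle arguments for the PDE \eqref{zero:eq2} with boundary $|x|$ show that $\Theta_\gamma(q,\cdot)$, and hence $\Theta_\nu(q,\cdot)$, is convex and $1$-Lipschitz on $\mathbb{R}$, with $\Theta_\nu(q,x)\mp x$ monotone in $x$ and $\lim_{x\to\pm\infty}(\Theta_\nu(q,x)\mp x)=\Lambda^\infty_\nu(q,\pm 1)$. Second, Remark \ref{lalRemarkPM1} together with the definition of $\Theta_\nu$ yields the explicit formula $\Lambda^\infty_\nu(q,\pm 1)=\tfrac{1}{2}\int_{[q,1]}\xi''(s)\,d\nu(s)$; since $[q,1]$ is compact, vague convergence coincides with weak convergence, and applying the latter to the continuous function $\xi''$ gives $\Lambda^\infty_{\nu_n}(q,\pm 1)\to\Lambda^\infty_{\nu_0}(q,\pm 1)$. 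Third, the preceding remark gives $\gamma_n\to\gamma_0$ a.e.\ on $[0,1)$; combined with $\nu_n([0,1-\varepsilon])\to\nu_0([0,1-\varepsilon])$ for every $\varepsilon\in(0,1-q)$ (by the Portmanteau theorem, since $\nu_0$ has no atom at $1-\varepsilon$), Scheff\'e's theorem yields the $L^1$ convergence $\int_q^{1-\varepsilon}|\gamma_n-\gamma_0|\,ds\to 0$.

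I then reduce \eqref{add:prop1:eq1} to pointwise convergence $\Theta_{\nu_n}(q,x)\to\Theta_{\nu_0}(q,x)$ at each $x\in\mathbb{R}$: on any compact, pointwise convergence of convex $1$-Lipschitz functions is automatically uniform, while outside a large compact, the monotone decay of $\Theta_\nu(q,x)\mp x$ to the asymptotic slopes $\Lambda^\infty_\nu(q,\pm 1)$ combined with their convergence from the second ingredient provides the required tail control. For pointwise convergence at fixed $x$, I decompose $\nu_n=\hat\nu_n^\varepsilon+\nu_n|_{[1-\varepsilon,1]}$, where $\hat\nu_n^\varepsilon$ has density $\gamma_n\mathbf{1}_{[q,1-\varepsilon]}$ and no atom. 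The $L^1$ convergence of the bulk combined with the Lipschitz bound \eqref{eqLipInf} gives $\Theta_{\hat\nu_n^\varepsilon}(q,\cdot)\to\Theta_{\hat\nu_0^\varepsilon}(q,\cdot)$ uniformly on $\mathbb{R}$ for each fixed $\varepsilon$. It then remains to show that the ``near-$1$ plus atom'' remainder contributes, uniformly in $x\in\mathbb{R}$ and with an error vanishing as $\varepsilon\to 0$ uniformly in $n$, exactly $\tfrac{\xi''(1)}{2}\nu_n([1-\varepsilon,1])$ to $\Theta_{\nu_n}(q,x)$. Since $\nu_n([1-\varepsilon,1])\to\nu_0([1-\varepsilon,1])$ by Portmanteau, the pointwise convergence would follow.

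The main obstacle is precisely this last sharp near-$1$ estimate. The $L^1$-Lipschitz bound \eqref{eqLipInf} is too blunt: its constant is $2$ rather than $\tfrac12$, and it does not distinguish between mass in $\gamma_n$ concentrated near $1$ and mass sitting atomically at $1$. This matters because the preceding remark allows $\Delta_n$ to differ from $\Delta_0$, the discrepancy being compensated by extra density of $\gamma_n$ near $1$. The needed refinement---that mass of total $m$ inside $[1-\varepsilon,1]$, whether atomic at $1$ or spread as a density, shifts $\Theta$ by $\tfrac{\xi''(1)}{2}m+o_\varepsilon(1)$ uniformly in $x$---I would derive by analyzing \eqref{zero:eq2} on the short interval $[1-\varepsilon,1]$ via a Hopf--Cole change of variables. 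The PDE then linearizes to a heat equation with exponential data, and Laplace-type asymptotics as $\varepsilon\downarrow 0$ produce the claimed leading-order contribution, consistent with the atom-at-$1$ convention $\Theta_\nu=\Theta_\gamma+\Delta\xi''(1)/2$.
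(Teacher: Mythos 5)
Your reduction of \eqref{add:prop1:eq2} to \eqref{add:prop1:eq1} matches the paper exactly, and your preliminary ingredients are sound: convexity and $1$-Lipschitzness of $\Theta_\nu(q,\cdot)$, monotonicity of $\Theta_\nu(q,x)\mp x$ with asymptotic slopes $\Lambda^\infty_\nu(q,\pm1)=\tfrac12\int_{[q,1]}\xi''\,d\nu$ (and their convergence by weak convergence of measures on the compact $[0,1]$), the a.e.\ convergence $\gamma_n\to\gamma_0$ on $[0,1)$, and the Scheff\'e step. The upgrade from pointwise to uniform via convexity on compacts plus monotone tail control is also a correct argument. So the overall framework is a legitimate alternative route to \eqref{add:prop1:eq1}.

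However, the proof has a genuine gap, and you yourself flag it as your ``main obstacle.'' The sharp near-$1$ estimate --- that the mass of $\nu_n$ on $[1-\varepsilon,1]$ contributes $\tfrac{\xi''(1)}{2}\nu_n([1-\varepsilon,1])+o_\varepsilon(1)$ to $\Theta_{\nu_n}(q,x)$ uniformly in $n$ and $x$ --- is the entire content of the proposition, and it is not proved, only sketched. The proposed Hopf--Cole route is not obviously viable: the transformation linearizes the PDE only when $\gamma$ is constant, and here $\gamma_n(s)$ can be nonconstant and unbounded as $s\uparrow1$ (and may blow up in $n$ at any fixed $s<1$), so ``Laplace-type asymptotics'' would need to be made uniform over this whole family, which is nontrivial and not indicated. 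In fact the estimate you are after is essentially Lemma \ref{lemThEst}, which gives $|x|+\tfrac12 I_\gamma(q')\le\Theta_\gamma(q',x)\le|x|+\tfrac12 I_\gamma(q')+(\xi'(1)-\xi'(q'))^{1/2}$ with $I_\gamma(q')=\int_{q'}^1\xi''\gamma\,ds$; combined with the fact that replacing the boundary $|x|$ at time $q'$ by $|x|+c$ shifts the solution by the constant $c$, this is exactly the sharp tail control you need, and it is derived in the paper from the stochastic optimal control representation \eqref{rep}, not from the PDE directly. Moreover, the paper's proof of Proposition \ref{add:prop1} avoids your decomposition of $\nu_n$ entirely: it works with $\Theta_{\gamma_n}(q,x)-\tfrac12 I_{\gamma_n}(q')$, shows via \eqref{rep} and Lemma \ref{lemThEst} that this differs from $\Theta_{\gamma_0}(q,x)$ by errors that vanish as first $n\to\infty$ then $q'\uparrow1$, and then observes the algebraic identity $\Theta_{\gamma_n}(q,x)-\tfrac12 I_{\gamma_n}(q')=\Theta_{\nu_n}(q,x)-\tfrac12\bigl(\int\xi''\,d\nu_n-\int_q^{q'}\xi''\gamma_n\,ds\bigr)$ (the $\Delta_n$ terms cancel), so vague convergence of $\nu_n$ supplies the correction without ever needing to control the atom or the near-$1$ density separately. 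I'd recommend either replacing the Hopf--Cole sketch by Lemma \ref{lemThEst}, or adopting the paper's offset trick, which renders the whole ``bulk plus near-$1$'' decomposition unnecessary.
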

We also prove the following corollary, which will be used in the proof of Theorem \ref{ThmGTElab}.
\begin{cor}
\label{add:cor1}
If $q_n\in [0,1)$ for $n\geq 0$, $\lim_{n\to\infty}q_n =q\in [0,1),$ and $\nu_n \in \bN_{q_n,1}\to \nu_0\in \bN_{q,1}$ vaguely on $[0,1]$, then 
\begin{align}\label{add:cor1:eq1}
\lim_{n\rightarrow\infty}\sup_{x\in \mathbb{R}}\bigl|\Theta_{\nu_n}(q_n,x)-\Theta_{\nu_0}(q,x)\bigr|=0
\end{align}
and
\begin{align}\label{add:cor1:eq2}
\lim_{n\rightarrow\infty}\sup_{a\in [-1,1]}\bigl|\Lambda^\infty_{\nu_n}(q_n,a)-\Lambda^\infty_{\nu_0}(q,a)\bigr|=0.
\end{align}
\end{cor}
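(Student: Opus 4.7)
The plan is to reduce to Proposition \ref{add:prop1} via the triangle-inequality decomposition
\begin{align*}
\bigl|\Theta_{\nu_n}(q_n,x)-\Theta_{\nu_0}(q,x)\bigr|\leq \bigl|\Theta_{\nu_n}(q_n,x)-\Theta_{\nu_n}(q,x)\bigr|+\bigl|\Theta_{\nu_n}(q,x)-\Theta_{\nu_0}(q,x)\bigr|,
\end{align*}
where $\Theta_{\nu_n}(t,x)$ is defined for all $t\in[0,1]$ by the PDE \eqref{zero:eq2} (so that for $t<q_n$, where the density of $\nu_n$ vanishes, the PDE reduces to the backward heat equation). The second term will vanish by Proposition \ref{add:prop1}, while the first term will be handled using the Lipschitz estimate \eqref{eqLipInf} together with the smoothing effect of the pure heat equation.

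For the second term, the key observation is that $\Theta_{\nu_n}(q,x)$ depends only on the restriction $\tilde\nu_n:=\nu_n|_{[q,1]}$, which lies in $\bN_{q,1}$ and satisfies $\Theta_{\tilde\nu_n}(q,\cdot)=\Theta_{\nu_n}(q,\cdot)$. Vague convergence $\nu_n\to\nu_0$ on the compact set $[0,1]$, combined with the fact that $\nu_0$ has no mass on $[0,q)$ and no atoms in $[0,1)$, yields $\nu_n([0,q))\to 0$, from which $\tilde\nu_n\to\nu_0$ vaguely in $\bN_{q,1}$ is immediate. Proposition \ref{add:prop1} then gives $\sup_x|\Theta_{\tilde\nu_n}(q,x)-\Theta_{\nu_0}(q,x)|\to 0$.

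For the first term, the plan is to pass to subsequences according to whether $q_n\geq q$ or $q_n<q$. On the subsequence $q_n\geq q$, the density $\gamma_n$ vanishes on $[q,q_n]$, so the PDE there is the pure backward heat equation and
\begin{align*}
\Theta_{\nu_n}(q,x)=\E\bigl[\Theta_{\nu_n}(q_n,x+W_{\sigma_n})\bigr],\qquad \sigma_n:=\int_q^{q_n}\xi''(s)\,ds\longrightarrow 0.
\end{align*}
Since $\Theta_{\nu_n}(q_n,\cdot)$ inherits the $1$-Lipschitz property of its boundary datum $|x|$, the difference is bounded by $\E|W_{\sigma_n}|\leq\sqrt{\sigma_n}$, which tends to zero uniformly in $x$. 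On the subsequence $q_n<q$, one compares $\nu_n$ with $\bar\nu_n:=\nu_n|_{[q,1]}$: both lie in $\bN_{q_n,1}$ with matching atom at $1$ and matching density on $[q,1)$, so \eqref{eqLipInf} applied at base point $q_n$ gives
\begin{align*}
\bigl|\Theta_{\nu_n}(q_n,x)-\Theta_{\bar\nu_n}(q_n,x)\bigr|\leq 2\xi''(1)\int_{q_n}^q\gamma_n(s)\,ds\leq 2\xi''(1)\,\nu_n([0,q])\longrightarrow 0,
\end{align*}
while the pure-heat argument applied to $\bar\nu_n$ (whose density vanishes on $[q_n,q]$) gives $|\Theta_{\bar\nu_n}(q_n,x)-\Theta_{\bar\nu_n}(q,x)|\to 0$ uniformly. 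Combining with $\Theta_{\bar\nu_n}(q,\cdot)=\Theta_{\nu_n}(q,\cdot)$ completes the estimate.

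Finally, \eqref{add:cor1:eq2} follows from \eqref{add:cor1:eq1} exactly as \eqref{eqLipLamInf} was derived from \eqref{eqLipInf}: the identity $\Lambda^\infty_\nu(q,a)=\inf_x(\Theta_\nu(q,x)-ax)$ yields $|\Lambda^\infty_{\nu_n}(q_n,a)-\Lambda^\infty_{\nu_0}(q,a)|\leq \sup_x|\Theta_{\nu_n}(q_n,x)-\Theta_{\nu_0}(q,x)|$ uniformly in $a\in[-1,1]$. The main obstacle is the subsequence with $q_n<q$, where one cannot reduce the comparison to a pure heat evolution directly on $\nu_n$; the detour through the restricted measure $\bar\nu_n$ together with the Lipschitz bound \eqref{eqLipInf} is what makes the argument go through.
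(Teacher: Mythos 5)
Your proposal is correct and follows the same overall strategy as the paper's proof: reduce to Proposition \ref{add:prop1} by passing to the truncated measure $\nu_n|_{[q,1]}$, then separately control the discrepancy between $\Theta$ evaluated at base time $q_n$ versus base time $q$. The only difference is in how you execute the second step. The paper bounds $\sup_x|\Theta_{\gamma_n'}(q,x)-\Theta_{\gamma_n}(q_n,x)|$ in one stroke via the stochastic optimal control representation \eqref{rep}, obtaining a drift/entropy term of order $\int_{q_n\wedge q}^{q_n\vee q}\xi''\gamma_n\,ds$ plus a diffusion term $\E\bigl|\int_{q_n\wedge q}^{q_n\vee q}\xi''^{1/2}dW_s\bigr|$, and controls the former using $\sup_n\max_{q_n\wedge q\leq s\leq q_n\vee q}\gamma_n(s)<\infty$ (from a.e.\ convergence and monotonicity of the $\gamma_n$). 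You instead split into the cases $q_n\geq q$ and $q_n<q$, handle the pure-heat shift with the $1$-Lipschitz property of $\Theta_{\nu_n}(q_n,\cdot)$, and handle the residual density on $[q_n,q]$ with the $L^1$-Lipschitz estimate \eqref{eqLipInf} together with a portmanteau argument giving $\nu_n([0,q])\to 0$. These are the same two ingredients (a diffusion estimate and an $L^1$-in-$\gamma$ estimate) packaged differently; your route is slightly more modular since it reuses \eqref{eqLipInf} rather than rederiving the comparison from \eqref{rep}, at the cost of the case split.

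Two minor points worth being careful about if you write this up in full. First, your appeal to the portmanteau theorem to conclude $\nu_n([0,q])\to 0$ requires the measures $\nu_n$ to be finite with converging total mass; this holds here because $\nu_n([0,1])=\int_0^1\gamma_n\,ds+\Delta_n<\infty$ and vague convergence on the compact set $[0,1]$ gives $\nu_n([0,1])\to\nu_0([0,1])$, but it is not automatic from vague convergence alone in general and should be said explicitly. Second, when you invoke $\Theta_{\nu_n}(q,\cdot)$ for $q\neq q_n$ you are implicitly extending the definition of $\Theta_\nu$ from the base time to arbitrary times by solving the PDE; this is harmless (and the paper does the same at the level of $\Theta_\gamma$), but since Section 5 only formally defines $\Theta_\nu$ at the base time, it is cleaner to work with $\Theta_{\gamma_n}$ and carry the constant $\Delta_n\xi''(1)/2$ along explicitly, as the paper does.
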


The proof of Proposition \ref{add:prop1} utilizes the stochastic optimal control representation for $\Theta_\gamma(t,x)$ from \cite[Corollary 2]{AC15}, which we now recall. For any $q\leq a\leq b\leq 1,$ let $\mathcal{D}_{a,b}$ be the collection of all progressively measurable processes $u=(u(s))_{a\leq s\leq b}$ with respect to the filtration generated by the standard Brownian motion $W=(W_s)_{a\leq s\leq b}$ and with $\sup_{s\in [a,b]}|u(s)|\leq 1$.  Then we can express
\begin{align}\label{rep}
\Theta_\gamma(a,x)&=\sup_{u\in \mathcal{D}_{a,b}}\e\Bigl[\Theta_\gamma\Bigl(b,x+\int_a^b\xi''\gamma u\,ds +\int_a^b\xi''^{1/2}dW_s\Bigr)-\frac{1}{2}\int_a^b\xi''\gamma u^2\,ds \Bigr].
\end{align}  
In particular, for any $\gamma\in \nN$,
\begin{align}
\Theta_{\gamma}(q,x)&=\sup_{u\in \mathcal{D}_{q,1}}\Bigl[\e\Bigl|x+\int_q^1\xi''\gamma u\,ds+\int_q^1\!\xi''^{1/2}dW_s\Bigr|-\frac{1}{2}\int_q^1\xi''\gamma \e u^2\,ds\Bigr].
\label{eqReprPsiInf}
\end{align}

\begin{remark}\label{lalRemarkAL1}\rm
Notice that the representation (\ref{eqReprPsiInf}) shows that $\lim_{x\to\pm\infty}\Theta_{\gamma}(q,x)/|x|=1$, which means that, for $a\in(-1,1),$ the minimizer in the definition of $\Lambda_\gamma^\infty(q,a)$ in (\ref{eqLambdaDef}) is unique and finite.
\end{remark}

For any $q'\in [q,1]$, denote
\begin{align*}
I_\gamma(q'):=\int_{q'}^1\!\xi''(s)\gamma(s)\,ds.
\end{align*}
We will need the following estimate on $\Theta_\gamma(q',x)$.

\begin{lem}\label{lemThEst}
	For any $\gamma\in \nN$, $q'\in [q,1]$, and $x\in \mathbb{R}$, we have
	\begin{equation}
	|x|+\frac{I_\gamma(q')}{2}\leq
	\Theta_\gamma(q',x)\leq 
	|x|+\frac{I_\gamma(q')}{2}+ \bigl(\xi'(1)-\xi'(q')\bigr)^{1/2}.
	\label{eqLemTULB}
	\end{equation}
\end{lem}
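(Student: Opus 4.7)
The plan is to prove both inequalities directly from the stochastic optimal control representation (\ref{eqReprPsiInf}) applied at time $q'$:
\begin{equation*}
\Theta_{\gamma}(q',x) = \sup_{u\in \mathcal{D}_{q',1}}\Bigl[\e\Bigl|x + \int_{q'}^1 \xi''\gamma u\,ds + \int_{q'}^1 \xi''^{1/2}\,dW_s\Bigr| - \frac{1}{2}\int_{q'}^1 \xi''\gamma\, \e u^2\,ds\Bigr].
\end{equation*}
Write $I = I_\gamma(q')$, $A(u) = \int_{q'}^1 \xi''\gamma u\,ds$, and $Z = \int_{q'}^1 \xi''^{1/2}\,dW_s$, so that $Z$ is centered Gaussian with $\e Z^2 = \xi'(1)-\xi'(q')$.

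For the lower bound, I would simply plug in the deterministic test control $u \equiv \operatorname{sgn}(x)$ (take $u\equiv 1$ when $x=0$). Then $A(u) = \operatorname{sgn}(x)\, I$ and the quadratic cost equals $I/2$. Since $Z$ has mean zero, Jensen's inequality gives $\e|x + \operatorname{sgn}(x)I + Z| \geq |x + \operatorname{sgn}(x) I| = |x| + I$, and subtracting the cost $I/2$ yields the desired lower bound $|x| + I/2$.

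For the upper bound, I would apply the triangle inequality $\e|x + A(u) + Z| \leq |x| + \e|A(u)| + \e|Z|$ inside the supremum. Cauchy--Schwarz bounds $\e|Z| \leq (\xi'(1)-\xi'(q'))^{1/2}$. For the drift term, the key pointwise inequality is $|u| - \tfrac{1}{2}u^2 \leq \tfrac{1}{2}$ (equivalent to $(1-|u|)^2 \geq 0$), which, after taking expectations and integrating against the nonnegative weight $\xi''\gamma$, gives
\begin{equation*}
\e|A(u)| - \frac{1}{2}\int_{q'}^1 \xi''\gamma\,\e u^2\,ds \leq \int_{q'}^1 \xi''\gamma\Bigl(\e|u| - \frac{1}{2}\e u^2\Bigr)ds \leq \frac{I}{2}.
\end{equation*}
Combining the three bounds and taking the supremum over $u$ produces the upper bound.

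Neither direction has a real obstacle: the only subtle point is the concavity trick $|u| \leq \tfrac{1}{2}(1+u^2)$, which matches the quadratic penalty in the representation exactly and is what makes the upper bound clean. The rest is Jensen and Cauchy--Schwarz. I would note in passing that the lower bound also provides the linear growth needed for Remark \ref{lalRemarkAL1}, so this lemma can be cited there as well.
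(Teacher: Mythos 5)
Your proof is correct and follows essentially the same route as the paper: for the lower bound, plugging in $u\equiv\operatorname{sgn}(x)$ and applying Jensen to remove the Gaussian term; for the upper bound, the triangle inequality together with the pointwise bound $|u|\leq\tfrac12(1+u^2)$ to absorb the drift into the quadratic penalty, plus $\e|Z|\leq(\e Z^2)^{1/2}$ for the Brownian increment. There is no meaningful difference from the paper's argument.
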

\begin{proof}
	Recall \eqref{rep} for $\Theta_\gamma(q',x)$ with $(a,b)=(q',1).$ Let $u=\mbox{sgn}(x).$ Then, by (\ref{rep}) and Jensen's inequality,
	\begin{align*}
	\Theta_\gamma(q',x)&\geq \e\Bigl[\Bigl|x+\int_{q'}^1\xi''\gamma u\,ds +\int_{q'}^1\xi''^{1/2}dW_s\Bigr|-\frac{1}{2}\int_{q'}^1\xi''\gamma u^2 \,ds \Bigr]\\
	&\geq \Bigl|x+\int_{q'}^1\xi''\gamma u\,ds \Bigr|-\frac{1}{2}\int_{q'}^1\xi''\gamma u^2 \,ds \\
	&=|x|+\int_{q'}^1\xi''\gamma \,ds -\frac{1}{2}\int_{q'}^1\xi''\gamma  \,ds 
	=|x|+\frac{I_\gamma(q')}{2}.
	\end{align*}
To establish the upper bound, for any $u\in \mathcal{D}_{q',1}$, write
	\begin{align*}
	&\Bigl|x+\int_{q'}^1\xi''\gamma u\,ds +\int_{q'}^1\xi''^{1/2}dW_s\Bigr|-\frac{1}{2}\int_{q'}^1\xi''\gamma u^2 \,ds \\
	&\leq |x|+\Bigl|\int_{q'}^1\xi''\gamma u\,ds \Bigr|+\Bigl|\int_{q'}^1\xi''^{1/2}dW_s\Bigr|-\frac{1}{2}\int_{q'}^1\xi''\gamma u^2 \,ds 
	\end{align*}
and,	using $2|u|\leq u^2+1,$ bound the second term by
	\begin{align*}
\Bigl|\int_{q'}^1\xi''\gamma u\,ds \Bigr|&\leq \int_{q'}^1\xi''\gamma |u| \,ds \leq \frac{1}{2}\int_{q'}^1\xi''\gamma u^2 \,ds +\frac{1}{2}\int_{q'}^1\xi''\gamma \,ds .
	\end{align*}
By (\ref{rep}), this implies that
	\begin{align*}
\Theta_\gamma(q',x)	
&=\sup_{u\in \mathcal{D}_{q',1}}\e\Bigl[\Bigl|x+\int_{q'}^1\xi''\gamma u\,ds +\int_{q'}^1\xi''^{1/2}dW\Bigr|-\frac{1}{2}\int_{q'}^1\xi''\gamma u^2 \,ds \Bigr]\\
	&\leq |x|+\frac{I_\gamma(q')}{2}+\e\Bigl|\int_{q'}^1\xi''^{1/2}dW\Bigr|\\
	&\leq |x|+\frac{I_\gamma(q')}{2}+\bigl(\xi'(1)-\xi'(q')\bigr)^{1/2}.
	\end{align*}
Taking the supremum over $u$ gives the desired upper bound.
\end{proof}

\subsection{Proof of Proposition \ref{add:prop1}} By the definition of $\Lambda_{\nu}^\infty$, the assertion \eqref{add:prop1:eq2} evidently follows from \eqref{add:prop1:eq1}, so we only focus on proving \eqref{add:prop1:eq1}. 	Obviously this assertion holds  if $q=1.$ From now on, assume that $q\in [0,1).$ 

Let $\gamma_n,\Delta_n$ and $\gamma_0,\Delta_0$ be the pairs associated with $\nu_{n}$ and $\nu_0$ respectively. From the vague convergence, $\gamma_n(s)\to \gamma(s)$ almost surely on $[0,1).$ Therefore, for any $q'\in [q,1),$ 
$$
\sup_{n\geq 1}\sup_{s\in [q,q']}\gamma_n(s)<\infty,
$$ 
which yields, by the bounded convergence theorem,
\begin{align}\label{add:lem1:proof:eq-1}
\lim_{n\rightarrow\infty}\int_0^{q'}\! |\gamma_n(s)-\gamma_0(s)|\,ds=0.
\end{align}
(However, of course, it is not necessarily true that $\Delta_n\to \Delta_0.$)

Next, fix $q'\in [q,1)$. For any $u\in \mathcal{D}_{q,q'}$ and $\gamma\in \nN$, set $$
\Gamma_{\gamma}(q',u):=
\e\Bigl[\Bigl|x+\int_{q}^{q'}\!\xi''\gamma u\,ds+\int_{q}^{q'}\!dB_s\Bigr|-\frac{1}{2}\int_q^{q'}\xi''\gamma u^2\,ds \Bigr],
$$ 
where $dB_s:=\xi''(s)^{1/2}dW_s.$ Using \eqref{rep} for $\Theta_{\gamma_n}(q,x)$ with $(a,b)=(q,q')$ and Lemma \ref{lemThEst},
\begin{align}\label{add:lem1:proof:eq0}
\Bigl|\Theta_{\gamma_n}(q,x)-\frac{1}{2}I_{\gamma_n}(q')-\sup_{u\in \mathcal{D}_{q,q'}}\Gamma_{\gamma_n}(q',u)\Bigr|\leq \bigl(\xi'(1)-\xi'(q')\bigr)^{1/2}.
\end{align}
In addition, by the triangle inequality,
\begin{align}\label{add:lem1:proof:eq1}
\Bigl|\Gamma_{\gamma_n}(q',u)-\Gamma_{\gamma_0}(q',u)\Bigr|\leq \frac{3}{2}\int_q^{q'}\!\xi''|\gamma_n-\gamma_0|\,ds.
\end{align}
For any $v\in \mathcal{D}_{q,1}$, if we write $u=v1_{[q,q']}$ then
\begin{align*}
\Gamma_{\gamma_0}(1,v)
&=\e\Bigl[\Bigl|x+\int_{q}^{q'}\!\xi''\gamma_0 u\, ds+\int_{q}^{q'}\!dB_s+\int_{q'}^1\!\xi''\gamma_0 v\,ds+\int_{q'}^1\!dB_s\Bigr|\\
&\qquad-\frac{1}{2}\int_q^{q'}\!\xi''\gamma_0 u^2\,ds-\frac{1}{2}\int_{q'}^{1}\!\xi''\gamma_0 v^2 \,ds\Bigr],
\end{align*}
which, by the triangle inequality and $\e \bigl(\int_{q}^{q'}\!dB_s\bigr)^2=\xi'(1)-\xi'(q')$ implies that
$$
\Bigl|\Gamma_{\gamma_0}(1,v)-\Gamma_{\gamma_0}(q',u)\Bigr|\leq \frac{3}{2}\int_{q'}^1\!\xi''\gamma_0\, ds+\bigl(\xi'(1)-\xi'(q')\bigr)^{1/2}.
$$
From this inequality, \eqref{add:lem1:proof:eq0} and \eqref{add:lem1:proof:eq1}, we see that by taking maximum over $v\in \mathcal{D}_{q,1}$ and using \eqref{rep} for $\Theta_{\gamma}(q,x)$ with $(a,b)=(q,1)$, it follows that
\begin{align*}
&\Bigl|\Theta_{\gamma_n}(q,x)-\frac{1}{2}I_{\gamma_n}(q')-\Theta_{\gamma_0}(q,x)\Bigr|\\
&\leq  \frac{3}{2}\int_{q'}^1\xi''\gamma_0 \,ds +\frac{3}{2}\int_q^{q'}\!\xi''|\gamma_n-\gamma_0|\,ds+2\bigl(\xi'(1)-\xi'(q')\bigr)^{1/2}.
\end{align*}
Taking a limit and using \eqref{add:lem1:proof:eq-1} gives
\begin{align}
\begin{split}\label{add:lem1:proof:eq2}
&\limsup_{n\rightarrow\infty}\sup_{x\in \mathbb{R}}\Bigl|\Theta_{\gamma_n}(q,x)-\frac{1}{2}I_{\gamma_n}(q')-\Theta_{\gamma_0}(q,x)\Bigr|
\\
&\leq \frac{3}{2}\int_{q'}^1\!\xi''\gamma_0 \,ds+2\bigl(\xi'(1)-\xi'(q')\bigr)^{1/2}.
\end{split}
\end{align}
Note that, since
\begin{align*}
I_{\gamma_n}(q')&=\int_{q'}^1\!\xi''(s)\gamma(s)\,ds
=\int \xi''d\nu_n-\xi''(1)\Delta_n-\int_q^{q'}\!\xi''\gamma_n\,ds,
\end{align*}
we can rewrite the expression on the left-hand side of the above inequality as
\begin{align*}
&\Theta_{\gamma_n}(q,x)-\frac{1}{2}I_{\gamma_n}(q')-\Theta_{\gamma_0}(q,x)\\
&=\Theta_{\nu_n}(q,x)-\Theta_{\gamma_0}(q,x)-\frac{1}{2}\Bigl(\int \xi''d\nu_n-\int_q^{q'}\xi''\gamma_n\,ds \Bigr)\\
&=\Theta_{\nu_n}(q,x)-\Theta_{\nu_0}(q,x)-\frac{1}{2}\Bigl(\int \xi''d\nu_n-\int_q^{q'}\xi''\gamma_n\,ds -\xi''(1)\Delta_0\Bigr).
\end{align*}
From the vague convergence $\nu_n\to\nu_0$ and \eqref{add:lem1:proof:eq-1}, the last term converges to
$$
\frac{1}{2}\Bigl(
\int \xi''d\nu_0-\int_q^{q'}\xi''\gamma_0\,ds -\xi''(1)\Delta_0
\Bigr)
=
\frac{1}{2}\int_{q'}^1\xi''\gamma_0\,ds 
$$
and, therefore, \eqref{add:lem1:proof:eq2} implies
\begin{align*}
&\limsup_{n\rightarrow\infty}\sup_{x\in \mathbb{R}}\Bigl|\Theta_{\nu_n}(q,x)-\Theta_{\nu_0}(q,x)\Bigr| \leq 2\int_{q'}^1\xi''\gamma_0 \,ds +2\bigl(\xi'(1)-\xi'(q')\bigr)^{1/2}.
\end{align*}
The right-hand side vanishes as $q'\uparrow 1$, which completes the proof.
\qed

\subsection{Proof of Corollary \ref{add:cor1}}
	Let $\gamma_n':=1_{[q,1]}\gamma_n$ and $\nu_n'(A):=\nu_n([q,1]\cap A).$ Then $\nu_n'$ converges to $\nu_0$ vaguely and from Proposition \ref{add:prop1},
	\begin{align}
	\begin{split}\label{add:cor1:proof:eq1}
\lim_{n\rightarrow\infty}\sup_{x\in \mathbb{R}}\bigl|\Theta_{\nu_n'}(q,x)-\Theta_{\nu_0}(q,x)\bigr|&=0,\\
\lim_{n\rightarrow\infty}\sup_{a\in [-1,1]}\bigl|\Lambda^\infty_{\nu_n'}(q,a)-\Lambda^\infty_{\nu_0}(q,a)\bigr|&=0.
\end{split}
\end{align}
On the other hand, by using the representation \eqref{rep} for $\Theta_{\gamma_n'}(q,x)$ with $(a,b)=(q,1)$ and $\Theta_{\gamma_n}(q_n,x)$ with $(a,b)=(q_n,1)$, we see that
\begin{align*}
\Theta_{\gamma_n'}(q,x)&=\sup_{u\in \mathcal{D}_{q,1}}\e\Bigl[\Bigl|x+\int_q^1\xi''\gamma_n u\,ds +\int_q^1\xi''^{1/2}dW_s\Bigr|-\frac{1}{2}\int_q^1\xi''\gamma_n u^2\,ds \Bigr],\\
\Theta_{\gamma_n}(q_n,x)&=\sup_{u\in \mathcal{D}_{q_n,1}}\e\Bigl[\Bigl|x+\int_{q_n}^1\xi''\gamma_n u\,ds +\int_{q_n}^1\xi''^{1/2}dW_s\Bigr|-\frac{1}{2}\int_{q_n}^1\xi''\gamma_n u^2\,ds \Bigr].
\end{align*}
From these, we see that
\begin{align*}
&\sup_{x\in \mathbb{R}}\bigl|\Theta_{\gamma_n'}(q,x)-\Theta_{\gamma_n}(q_n,x)\bigr|
\leq \frac{3}{2}\int_{q_n\wedge q}^{q_n\vee q}\xi''\gamma_n\,ds +\e\Bigl|\int_{q_n\wedge q}^{q_n\vee q}\xi''^{1/2}dW_s\Bigr|
\\
&\leq \frac{3\xi''(1)}{2}|q_n-q|\max_{q_n\wedge q\leq s\leq q_n\vee q}\gamma_n(s)+\e\Bigl|\int_{q_n\wedge q}^{q_n\vee q}\xi''^{1/2}dW_s\Bigr|.
\end{align*}
Note that from the vague convergence of $\nu_n$ to $\nu_0$, $\gamma_n$ converges to $\gamma_0$ a.s. Using the fact that $\gamma_n$ are nondecreasing, we see that
$$
\sup_{n\geq 1}\max_{q_n\wedge q\leq s\leq q_n\vee q}\gamma_n(s)<\infty.
$$
Consequently,
\begin{align*}
\lim_{n\to\infty}\sup_{x\in \mathbb{R}}\bigl|\Theta_{\gamma_n'}(q,x)-\Theta_{\gamma_n}(q_n,x)\bigr|&=0,\\
\lim_{n\to\infty}\sup_{a\in [-1,1]}\bigl|\Lambda^\infty_{\gamma_n'}(q,a)-\Lambda^\infty_{\gamma_n}(q_n,a)\bigr|&=0.
\end{align*}
This together with \eqref{add:cor1:proof:eq1} completes our proof.
\qed

\section{Uniqueness of the minimizer}

This section is devoted to the proof of Theorem \ref{zero:thm4}. We begin with the following two lemmas which will be needed in the proof. For any fixed measure $\mu\in M_*$ with $q=\int a^2d\mu(a)$, it was proved in \cite{GenTAP} that the functional $\zeta\to\nTAP^\beta(\mu,\zeta)$ has a minimizer $\zeta_{\beta,\mu}$ in $\mathcal{M}_{0,1}$ and the restriction of this minimizer to $[q,1]$ (which can be viewed as an element of  $\mathcal{M}_{q,1}$) is unique. 

Recall the stochastic optimal control representation for $\Phi_\zeta^\beta,$ which states that for any $\zeta\in \mathcal{M}_{q,1}$, one can express $$
\Phi_\zeta^\beta(q,x)=\sup_{u}\Bigl[\e\log 2\cosh\Bigl(x+\int_q^1\!\beta^2\xi''\zeta u\,ds+\int_q^1\!\beta\xi''^{1/2}\,dW_s\Bigr)-\frac{\beta^2}{2}\int_q^1\!\xi''\zeta \e u^2\,ds\Bigr],
$$ 
where the supremum is taken over all progressively measurable processes $u$ on $[q,1]$ with respect to the standard Brownian motion $W.$ In particular, the supremum here is attained by 
\begin{align}
u_{x,\zeta}^\beta(s)=\partial_x\Phi_{\zeta}^\beta(q,X_{x,\zeta}^\beta(s)),
\label{eqUoptBeta}
\end{align}
where $X_{x,\zeta}^\beta$ is the strong solution of
\begin{align}
\label{eq:dX}
dX_{x,\zeta}^\beta(s)&=\beta^2\xi''(s)\zeta(s)\partial_x\Phi_{\zeta}^\beta(s,X_{x,\zeta}^\beta(s))\,ds +\beta\xi''(s)^{1/2}dW_s
\end{align}
with the initial condition $X_{x,\zeta}^\beta(q)=x.$ 

\begin{lem}
	For any $\zeta\in \mathcal{M}_{q,1}$ and $x\in \mathbb{R}$, we have that
	\begin{align*}
	\partial_\beta\Phi_{\zeta}^\beta(q,x)=\beta\Bigl(\xi'(1)-\xi'(q)-\int_{q}^1(\xi'(s)-\xi'(q))\e u_{x,\zeta}^\beta(s)^2d\zeta(s)\Bigr).
	\end{align*}
\end{lem}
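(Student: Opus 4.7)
The plan is to derive the identity by first writing a Feynman--Kac representation for $v := \partial_\beta\Phi_\zeta^\beta$ along the characteristic flow $X_{x,\zeta}^\beta$ from \eqref{eq:dX}, and then simplifying the resulting expectations using the martingale structure of the optimal control $u_{x,\zeta}^\beta$.

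First I would differentiate the Parisi PDE \eqref{ParisiPDEOrig} in $\beta$; using the PDE itself to rewrite the inhomogeneous term, $v$ satisfies
\begin{equation*}
\partial_t v + \frac{\beta^2\xi''}{2}\partial_{xx}v + \beta^2\xi''\zeta\,(\partial_x\Phi_\zeta^\beta)\,\partial_x v = \tfrac{2}{\beta}\partial_t\Phi_\zeta^\beta,
\end{equation*}
with terminal condition $v(1,\cdot) = 0$. It\^o's formula applied to $v(s, X_{x,\zeta}^\beta(s))$ collapses the drift, via the SDE \eqref{eq:dX}, exactly to the source $\tfrac{2}{\beta}\partial_t\Phi_\zeta^\beta(s,X_{x,\zeta}^\beta(s))$. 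Integrating from $q$ to $1$, taking expectations, and re-expressing $\partial_t\Phi_\zeta^\beta$ via the Parisi PDE gives
\begin{equation*}
\partial_\beta\Phi_\zeta^\beta(q,x) = \beta\int_q^1\xi''(s)\,\e\Bigl[\partial_{xx}\Phi_\zeta^\beta(s,X_{x,\zeta}^\beta(s)) + \zeta(s)\,u_{x,\zeta}^\beta(s)^2\Bigr] ds.
\end{equation*}

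Next I would simplify the integrand via It\^o. Write $u^* = u_{x,\zeta}^\beta$, $X = X_{x,\zeta}^\beta$, and $h(s,y) = \partial_{xx}\Phi_\zeta^\beta(s,y)$. Applying It\^o to $u^*(s) = \partial_x\Phi_\zeta^\beta(s, X(s))$ and using the Parisi PDE differentiated once in $x$, the drift vanishes and $du^*(s) = \beta\xi''(s)^{1/2}h(s, X(s))\,dW_s$; in particular, $\tfrac{d}{ds}\e u^*(s)^2 = \beta^2\xi''(s)\,\e h^2(s,X(s))$. Applying It\^o to $h(s,X(s))$ together with the PDE for $h$ obtained by differentiating the Parisi PDE twice in $x$ yields $dh(s,X(s)) = -\beta^2\xi''\zeta h^2\,ds + (\text{mart.})$, and comparing drifts gives $\tfrac{d}{ds}\e h(s,X(s)) = -\zeta(s)\,\tfrac{d}{ds}\e u^*(s)^2$. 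Integrating from $s$ to $1$, using the boundary value $h(1,y) = 1 - \tanh^2 y$ so that $\e h(1,X(1)) = 1 - \e u^*(1)^2$, and Stieltjes integration by parts (with $\zeta(1)=1$), produce
\begin{equation*}
\e h(s, X(s)) + \zeta(s)\,\e u^*(s)^2 = 1 - \int_s^1 \e u^*(r)^2\,d\zeta(r).
\end{equation*}
Plugging this into the previous display and applying Fubini, using $\xi'(s) - \xi'(q) = \int_q^s \xi''(r)\,dr$, converts the double integral into $\int_q^1 (\xi'(r) - \xi'(q))\,\e u^*(r)^2\,d\zeta(r)$, while $\int_q^1 \xi''(s)\,ds = \xi'(1) - \xi'(q)$, yielding the claimed identity.

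The main obstacle I anticipate is regularity: the steps above presume enough smoothness of $\Phi_\zeta^\beta$ in $(s,x)$ and in $\beta$ to differentiate the Parisi PDE in $\beta$, to compute its second and third $x$-derivatives along the characteristic flow, and to apply It\^o with these as integrands. For a general $\zeta \in \mathcal{M}_{q,1}$ this is not automatic. I would first establish the identity for atomic $\zeta$, where between consecutive atoms the Parisi PDE is Cole--Hopf reducible to the heat equation and all the above manipulations are classical; the identity then extends to arbitrary $\zeta$ by continuity of both sides in $\zeta$, using standard estimates on the Parisi functional and its derivatives from \cite{AC15,GenTAP}.
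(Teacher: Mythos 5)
Your proof is correct and follows essentially the same route as the paper: differentiate the Parisi PDE in $\beta$, apply Feynman--Kac along the characteristic flow, and simplify using the It\^o dynamics of $u^*(s)=\partial_x\Phi_\zeta^\beta(s,X(s))$ and $v(s)=\partial_{xx}\Phi_\zeta^\beta(s,X(s))$ together with Stieltjes integration by parts. The only organizational difference is that you derive the pointwise identity $\e v(s)+\zeta(s)\e u^*(s)^2 = 1-\int_s^1\e u^*(r)^2\,d\zeta(r)$ for all $s$ and then apply Fubini, whereas the paper integrates $\xi''\e v$ and $\xi''\zeta\e u^{*2}$ by parts separately and invokes this identity only at $s=q$ (citing Lemma 37 of \cite{GenTAP}); both hinge on the same two drift computations $d\e v = -\beta^2\xi''\zeta\,\e v^2\,ds$ and $d\e u^{*2}=\beta^2\xi''\,\e v^2\,ds$, and your final regularity remark (atomic $\zeta$ plus continuity in $\zeta$) is the standard way to justify these manipulations.
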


\begin{proof}
	Let $\alpha$ be any nondecreasing function on $[a,b]$ with right-continuity for some $0\leq a<b\leq 1.$ For any $f,g$ continuously differentiable functions on $[a,b]$, the following integration by parts is valid,
	\begin{align}
	\begin{split}\label{int}
	\int_a^bg'(s)f(s)\alpha(s)ds
	&=g(b)f(b)\alpha(b)-g(a)f(a)\alpha(a)\\
	&\quad-\int_{a}^bg(s)f(s)d\alpha(s)-\int_a^bg(s)f'(s)\alpha(s)ds,
	\end{split}
	\end{align}
	where the first integral on the right-hand side should be understood as the Riemann-Stieltjes integral.	Note that a direct differentiation of the Parisi PDE in $\beta$ gives
	\begin{align*}
	\partial_s\partial_\beta\Phi_{\zeta}^\beta&=-\frac{\beta^2\xi''}{2}\bigl(\partial_{xx}\partial_\beta\Phi_{\zeta}^\beta+2\zeta\bigl(\partial_x\Phi_\zeta^\beta\bigr)\bigl(\partial_x\partial_\beta\Phi_\zeta^\beta\bigr)\bigr)-\beta\xi''\bigl(\partial_{xx}\Phi_{\zeta}^\beta+\zeta\bigl(\partial_x\Phi_\zeta^\beta\bigr)^2\bigr).
	\end{align*}
	From the Feynman-Kac formula,
	\begin{align*}
	\partial_\beta\Phi_{\zeta}^\beta(q,x)&=\int_q^1\beta\xi''(s)\e \Bigl[\partial_{xx}\Phi_{\zeta}^\beta(s,X_{x,\zeta}^\beta(s))+\zeta\bigl(\partial_x\Phi_\zeta^\beta(s,X_{x,\zeta}^\beta(s))\bigr)^2\Bigr]ds.
	\end{align*}
	For convenience, from now on, we denote $u(s)=\partial_x\Phi_{\zeta}^\beta(s,X_{x,\zeta}^\beta(s))$ and $v(s)=\partial_{xx}\Phi_{\zeta}^\beta(s,X_{x,\zeta}^\beta(s)).$ Using the usual integration by part gives
	\begin{align*}
	\int_q^1\xi''(s)\e v(s)ds&=\xi'(1)\e v(1)-\xi'(q)\e v(q)+\beta^2\int_q^1\xi'(s)\xi''(s)\zeta(s)\e v(s)^2ds\\
	&=\xi'(1)(1-\e u(1)^2)-\xi'(q)\e v(q)+\beta^2\int_q^1\xi'(s)\xi''(s)\zeta(s)\e v(s)^2ds,
	\end{align*}
	where the second equality used the fact that $v(1)=1-u(1)^2$. In addition, from \eqref{int},
	\begin{align*}
	\int_q^1\xi''(s)\zeta(s) \e u(s)^2ds&=\xi'(1)\e u(1)^2-\xi'(q)\e u(q)^2\\
	&\quad-\int_{q}^1\xi'(s)\e u(s)^2d\zeta(s)-\beta^2\int_q^1\xi'(s)\xi''(s)\zeta(s)\e v(s)^2ds.
	\end{align*}
	These imply that
	\begin{align*}
	\partial_\beta\Phi_{\zeta}^\beta(q,x)&=-\beta\xi'(q)\bigl(\e v(q)+\zeta(q)\e u(q)^2\bigr)+\beta\Bigl(\xi'(1)-\int_{q}^1\xi'(s)\e u(s)^2d\zeta(s)\Bigr).
	\end{align*}
	Finally, our proof is completed by plugging the following equation (see \cite[Lemma 37]{GenTAP}) into this equation, 
	$$
	\e v(q)+\zeta(q)\e u(q)^2=1-\int_{q}^1\e u(s)^2d\zeta(s).
	$$
\end{proof}

\begin{lem}\label{zero:lem9}
For any $\beta>0$ and $\mu\in M_*,$ we have that
\begin{align}\label{zero:lem9:eq1}
\beta\int_q^1\bigl(\xi'(s)-\xi'(q)\bigr)\zeta_{\beta,\mu}(s)ds&\leq\nTAP^\infty(\mu).
\end{align}
Furthermore, if $\mu$ is supported on $[-1+\eta,1-\eta]$ for some $\eta\in (0,1)$, then 
\begin{align}\label{zero:lem9:eq2}
\frac{d}{d\beta}\nTAP^\beta(\mu)=\beta\int_q^1\bigl(\xi'(s)-\xi'(q)\bigr)\zeta_{\beta,\mu}(s)\,ds\rightarrow \nTAP^\infty(\mu),\,\,\mbox{as $\beta\to\infty.$}
\end{align}
\end{lem}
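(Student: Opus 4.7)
The plan is to first derive the derivative identity
\[
\frac{d}{d\beta}\nTAP^\beta(\mu)=h(\beta):=\beta\int_q^1(\xi'(s)-\xi'(q))\zeta_{\beta,\mu}(s)\,ds
\]
under the bounded-support hypothesis on $\mu$, and then exploit it, together with the envelope bound $0\leq\nTAP^\beta(\mu)-\beta\nTAP^\infty(\mu)\leq\log 2$ from Lemma~\ref{zero:lem1}, to extract both \eqref{zero:lem9:eq1} and \eqref{zero:lem9:eq2}. The unconditional inequality \eqref{zero:lem9:eq1} for arbitrary $\mu\in M_*$ will then follow by approximating $\mu$ by probability measures supported in $[-1+\eta,1-\eta]$ and invoking continuity of $\mu\mapsto\nTAP^\infty(\mu)$ and of $\mu\mapsto\zeta_{\beta,\mu}$.

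For the derivative, uniqueness of $\zeta_{\beta,\mu}$ proved in \cite{GenTAP} together with the bounded-support hypothesis make the envelope theorem applicable, giving $\frac{d}{d\beta}\nTAP^\beta(\mu)=\partial_\beta\nTAP^\beta(\mu,\zeta)|_{\zeta=\zeta_{\beta,\mu}}$. A second envelope in $x$ turns $\partial_\beta\Lambda_\zeta^\beta(q,a)$ into $\partial_\beta\Phi_\zeta^\beta(q,\oPsi_\beta(q,a,\zeta))$, to which the preceding lemma applies. After integrating against $d\mu(a)$, the Parisi-type self-consistency $\int\e u^2(s)\,d\mu(a)=s$ on $\supp\zeta_{\beta,\mu}$—the first-order optimality of $\zeta_{\beta,\mu}$ over $\mathcal{M}_{q,1}$—collapses the inner integral to a Riemann--Stieltjes integral in $s$, and a single integration by parts cancels the $-\beta\int_q^1 s\xi''(s)\zeta_{\beta,\mu}(s)\,ds$ coming from the second summand of $\nTAP^\beta(\mu,\zeta)$, leaving $h(\beta)$.

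For \eqref{zero:lem9:eq1} under bounded support I would show that $h$ is nondecreasing in $\beta$, equivalently, that $\nTAP^\beta(\mu)$ is convex in $\beta$. Granting this, for any $\beta_0<\beta$,
\[
(\beta-\beta_0)h(\beta_0)\leq\int_{\beta_0}^\beta h(\beta')\,d\beta'=\nTAP^\beta(\mu)-\nTAP^{\beta_0}(\mu)\leq\beta\nTAP^\infty(\mu)+\log 2-\nTAP^{\beta_0}(\mu),
\]
and dividing by $\beta-\beta_0$ and sending $\beta\to\infty$ gives $h(\beta_0)\leq\nTAP^\infty(\mu)$. Then $L:=\lim_\beta h(\beta)$ exists with $L\leq\nTAP^\infty(\mu)$, and L'Hopital applied to $\nTAP^\beta(\mu)/\beta\to\nTAP^\infty(\mu)$ forces $L=\nTAP^\infty(\mu)$, proving \eqref{zero:lem9:eq2}.

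The hard part is the convexity, equivalently the monotonicity of $h$: it cannot be read off from the inf-of-convex representation, since infima of convex functions need not be convex. My approach would be to differentiate $h$ directly, using the self-consistency identity and the SDE \eqref{eq:dX} for the optimal control to express both the explicit $\beta$-dependence and the implicit contribution $\partial_\beta\zeta_{\beta,\mu}$; the nonnegativity of the result would then leverage the positivity of $\xi'(s)-\xi'(q)$ on $[q,1]$. A possible fallback is to prove monotonicity of $\beta\mapsto\beta\zeta_{\beta,\mu}$ directly as a measure on $[q,1]$, from which $h$ nondecreasing is immediate.
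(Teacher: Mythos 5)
Your proposal has the right skeleton, and in broad strokes it matches the paper's proof: the derivative formula comes from envelope/one-sided comparison at the minimizer $\zeta_{\beta,\mu}$ combined with the $\partial_\beta\Phi^\beta_\zeta$ formula and the first-order optimality $\int\e u^2(s)\,d\mu(a)=s$ on $\supp\zeta_{\beta,\mu}$; convexity of $\beta\mapsto\nTAP^\beta(\mu)$ plus the $(\log 2)/\beta$ bound of Lemma \ref{zero:lem1} then delivers both \eqref{zero:lem9:eq1} and \eqref{zero:lem9:eq2}; and the reduction to measures compactly supported in $(-1,1)$ via continuity is correct.

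The genuine gap is that you never actually establish the convexity of $\nTAP^\beta(\mu)$ in $\beta$. You rightly flag this as the crux and note that infima of convex functions need not be convex, but both fallbacks you sketch are speculative and depart from what the paper does. Differentiating $h(\beta)$ directly would require control of $\partial_\beta\zeta_{\beta,\mu}$, which is delicate to justify, and the sign of the resulting expression is not evidently nonnegative. Monotonicity of $\beta\mapsto\beta\zeta_{\beta,\mu}$ as a measure on $[q,1]$ is not an established fact and I see no reason to expect it. The paper's route is much more elementary and does not touch the variational problem at all: for each $N,n,\eps,\delta$ and each $m$, the finite-$N$ functional
\[
\nTAP^\beta_{N,n}(m,\eps,\delta)=\frac{1}{nN}\log\sum_{B_n(m,\eps,\delta)}\exp\Bigl(\beta\sum_{i=1}^n\bigl[H_N(\bs^i)-H_N(m)\bigr]\Bigr)
\]
is a log-sum-exp in $\beta$ and hence convex in $\beta$. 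By Theorem \ref{thm:TAPcorrection} it converges to $\nTAP^\beta(\mu_m)$ (uniformly in $m$, pointwise in $\beta$), and pointwise limits of convex functions are convex, so $\beta\mapsto\nTAP^\beta(\mu_m)$ is convex for every $m$; continuity of $\nTAP^\beta(\cdot)$ on $M_*$ and density of the empirical measures $\mu_m$ then extend convexity to all $\mu\in M_*$. This single observation closes your argument; once you have it, the rest of your proposal goes through.
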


\begin{proof}[Proof of Lemma \ref{zero:lem9}] If $\mu=\delta_1$, the inequality \eqref{zero:lem9:eq1}, obviously, holds. From now on, we assume that $\mu\neq \delta_1,$ so $q=\int a^2d\mu(a)<1.$ First, let us explain that it is enough to prove the assertion \eqref{zero:lem9:eq1} for measures $\mu$ with the support in $(-1,1)$. On the one hand, we noted in the proof of Theorem \ref{ThmTAPrepZT} that $\nTAP^\infty(\mu)$ is continuous in $\mu$ and, moreover, we can approximate any $\mu$ by measures with the support in $(-1,1)$ while keeping $q=\int a^2d\mu(a)$ fixed. On the other hand, it was shown in the proof of Theorem 10 $(ii)$ in \cite{GenTAP} that $\nTAP^{\beta}(\mu,\zeta)$ is continuous in $\mu$ for any fixed $\zeta\in\mathcal{M}_{0,1}$ and, by the properties of the Parisi functional $\Phi_\zeta$, it is $L^1$-Lipschitz in $\zeta$ uniformly over $\mu$, which  implies that $(\mu,\zeta)\to\nTAP^{\beta}(\mu,\zeta)$ is continuous. By the uniqueness of the minimizer restricted to $[q,1]$, this implies that $\zeta_{\beta,\mu}$ is also continuous in $\mu$ restricted by $q=\int a^2d\mu(a).$ These observations imply that it is enough to prove Lemma \ref{zero:lem9} for $\mu$ with the support in $(-1,1)$. From now on, we suppose that $\supp(\mu)\subseteq [-(1-\eta),1-\eta]$ for some $\eta>0$.

Fix $\beta>0$. For any $h\geq 0,$
\begin{align}
\begin{split}\label{eqTAPBHlower}
&\nTAP^{\beta}(\mu)-\nTAP^{\beta-h}(\mu)
\geq \nTAP^{\beta}(\mu,\zeta_{\beta,\mu})-\nTAP^{\beta-h}(\mu,\zeta_{\beta,\mu})
\\
&=\int\Bigl(\Lambda_{\zeta_{\beta,\mu}}^{\beta}(q,a)-\Lambda_{\zeta_{\beta,\mu}}^{\beta-h}(q,a)\Bigr)d\mu(a)-\frac{\beta^2-(\beta-h)^2}{2}\int_q^1s\xi''\zeta_{\beta,\mu}\, ds
\end{split}
\end{align}	
and
\begin{align}
\begin{split}\label{eqTAPBHlower2}
&\nTAP^{\beta+h}(\mu)-\nTAP^{\beta}(\mu)
\leq\nTAP^{\beta+h}(\mu,\zeta_{\beta,\mu})-\nTAP^{\beta}(\mu,\zeta_{\beta,\mu})
\\
&=\int\Bigl(\Lambda_{\zeta_{\beta,\mu}}^{\beta+h}(q,a)-\Lambda_{\zeta_{\beta,\mu}}^{\beta}(q,a)\Bigr)d\mu(a)-\frac{(\beta+h)^2-\beta^2}{2}\int_q^1s\xi''\zeta_{\beta,\mu}\, ds.
\end{split}
\end{align}	
Note that, for any $a\in (-1,1)$,
\begin{align}
\begin{split}
\Lambda_{\zeta_{\beta,\mu}}^{\beta}(q,a)-\Lambda_{\zeta_{\beta,\mu}}^{\beta-h}(q,a)&\geq \Phi_{\zeta_{\beta,\mu}}^{\beta}(q,x(a))-\Phi_{\zeta_{\beta,\mu}}^{\beta-h}(q,x(a)),\\
\Lambda_{\zeta_{\beta,\mu}}^{\beta+h}(q,a)-\Lambda_{\zeta_{\beta,\mu}}^{\beta}(q,a)&\leq \Phi_{\zeta_{\beta,\mu}}^{\beta+h}(q,x(a))-\Phi_{\zeta_{\beta,\mu}}^{\beta}(q,x(a)),
\end{split}\label{eqLamPhiComp}
\end{align}
where $x(a)$ is the minimizer of 
$$
\Lambda_{\zeta_{\beta,\mu}}^{\beta}(q,a)=\inf_x\Bigl(\Phi_{\zeta_{\beta,\mu}}^{\beta}(q,x)-xa\Bigr).
$$ 
It was proved in Section 12.2 in \cite{GenTAP} that $x(a)$ is continuous and bounded on $[0,1-\eta]$. Recall from Proposition 4 in \cite{ACdual} (with $\gamma=\beta^2$ there) that
\begin{align*}
\frac{d}{d\beta}\Phi_{\zeta}^{\beta}(q,x)&=\beta\Bigl(\xi'(1)-\xi'(q)-\int_{[q,1]}(\xi'(s)-\xi'(q))\e u_{x,\zeta}(s)^2d\zeta(s)\Bigr),
\end{align*}
where $u_{x,\zeta}^{\beta}(s)$ was defined in (\ref{eqUoptBeta}). If we denote
$$
f^\beta(a,s):=\e u_{x(a),\zeta_{\beta,\mu}}^{\beta}(s)^2,
$$
then
\begin{align*}
\int\frac{d}{d\beta}\Phi_{\zeta_{\beta,\mu}}^{\beta}(q,x(a))d\mu(a)&=\beta\Bigl(\xi'(1)-\xi'(q)-\int_{q}^1(\xi'(s)-\xi'(q))\int f^\beta(a,s)d\mu(a)d\zeta_{\beta,\mu}(s)\Bigr).
\end{align*}
To handle this equation,  for any $\zeta\in \mathcal{M}_{q,1}$ and $\theta\in [0,1]$, set $\zeta_\theta=(1-\theta)\zeta_{\beta,\mu}+\theta\zeta.$ By a standard calculation (see e.g. \cite{C17}), one can compute the directional derivative of $\nTAP^\beta$,
\begin{align*}
\frac{d}{d\theta}\nTAP^\beta(\mu,\zeta_\theta)\Big|_{\theta=0^+}&=\frac{\beta^2}{2}\int_q^1\xi''(s)(\zeta(s)-\zeta_{\beta,\mu}(s))\Bigl(\int\! f^\beta(a,s)\,d\mu(a)-s\Bigr)\,ds ,
\end{align*}
which must be non-negative by the minimality of $\zeta_{\beta,\mu}.$ Again, in a standard way one can readily see (by varying $\zeta$) that this forces $\int\! f^\beta(a,s)\, d\mu(a)=s$ for any $s\geq q$ in the support of $\zeta_{\beta,\mu}$. This implies that
\begin{align*}
\int\frac{d}{d\beta}\Phi_{\zeta_{\beta,\mu}}^{\beta}(q,x(a))d\mu(a)&=\beta\Bigl(\xi'(1)-\xi'(q)-\int_q^1(\xi'(s)-\xi'(q))sd\zeta_{\beta,\mu}(s)\Bigr).
\end{align*}
Here, note that from \eqref{int},
\begin{align*}
\int_{q}^1\xi''(s)s\zeta_{\beta,\mu}(s)ds
&=\xi'(1)-\xi'(q)-\int_q^1(\xi'(s)-\xi'(q))sd\zeta_{\beta,\mu}(s)-\int_q^1(\xi'(s)-\xi'(q))\zeta_{\beta,\mu}(s)ds.
\end{align*}
Plugging these two equations  into the previous display leads to
\begin{align*}
\int\frac{d}{d\beta}\Phi_{\zeta_{\beta,\mu}}^{\beta}(q,x(a))d\mu(a)&=\beta\int_{q}^1\xi''(s)s\zeta_{\beta,\mu}(s)ds+\beta\int_q^1\bigl(\xi'(s)-\xi'(q)\bigr)\zeta_{\beta,\mu}(s)ds.
\end{align*}
From this,  (\ref{eqTAPBHlower}), \eqref{eqTAPBHlower2}, and (\ref{eqLamPhiComp}) (together with our assumption that $\supp(\mu)\subset (-1,1)$) it follows that the left and right derivatives of $\nTAP^\beta(\mu)$ (which exist from convexity in $\beta$) satisfy
\begin{align*}
D_\beta^-\nTAP^\beta(\mu)&\geq
\beta\int_q^1\bigl(\xi'(s)-\xi'(q)\bigr)\zeta_{\beta,\mu}(s)ds,\\
D_\beta^+\nTAP^\beta(\mu)&\leq
\beta\int_q^1\bigl(\xi'(s)-\xi'(q)\bigr)\zeta_{\beta,\mu}(s)ds.
\end{align*}
Now, since $\nTAP^\beta(\mu)$ is a convex function in $\beta$, this implies that
$$
\frac{d}{d\beta}\nTAP^\beta(\mu)=\beta\int_q^1\bigl(\xi'(s)-\xi'(q)\bigr)\zeta_{\beta,\mu}(s)ds.
$$
From this and the convexity of $\nTAP^\beta(\mu)$ in $\beta$, the assertion \eqref{zero:lem9:eq1} follows by noting that
\begin{align*}
\frac{d}{d\beta}\nTAP^\beta(\mu)\leq \lim_{\beta\to \infty}\frac{\nTAP^{\beta}(\mu)}{\beta}=\nTAP^\infty(\mu),
\end{align*}
while the assertion \eqref{zero:lem9:eq2} is validated by using the above inequality and  
$$\nTAP^\infty(\mu)=\lim_{\beta\to\infty}\frac{\nTAP^\beta(\mu)-\nTAP^0(\mu)}{\beta}\leq \lim_{\beta\rightarrow\infty}\frac{d}{d\beta}\nTAP^\beta(\mu).$$
This finishes the proof.
\end{proof}

\subsection*{Proof of Theorem \ref{zero:thm4}} Let $\mu$ be fixed and set $q=\int a^2d\mu(a)$. In the case that $q=1$, the space $\nN$ is a singleton and the theorem follows trivially. From now on, assume that $q<1$. Denote by $\zeta_{\beta,\mu}$ the minimizer associated to $\nTAP^\beta(\mu).$ Note that, by Lemma \ref{zero:lem9} above, 
	\begin{align}\label{zero:thm4:eq-1}
\beta\int_q^1\! (\xi'(s)-\xi'(q))\zeta_{\beta,\mu}(s)\,ds\leq \nTAP^\infty(\mu),\,\,\forall \beta>0.
\end{align}
Denote $\gamma_{\beta,\mu}:=\beta\zeta_{\beta,\mu}$ and, for all measurable sets $A\subset [q,1],$ set 
$$
\nu_{\beta,\mu}(A)=\int_A\! \gamma_{\beta,\mu}(s)\,ds.
$$
Since $\zeta_{\beta,\mu}$ is nondecreasing, \eqref{zero:thm4:eq-1} implies that 
$$
\gamma_{\beta,\mu}(s)\leq \frac{\nTAP^\infty(\mu)}{\xi(1)-\xi(s)-\xi'(q)(1-s)},\,\,\forall s\in [q,1).
$$
On the other hand, from this inequality and \eqref{zero:thm4:eq-1}, we also see that $\sup_{\beta>0}\int_q^1\gamma_{\beta,\mu} \,ds <\infty.$
Because of these, we can choose a subsequence of $\beta\uparrow\infty$ so that $\gamma_{\beta,\mu}$ converges to some $\gamma_{\mu}$ vaguely on $[q,1)$ and $\int_q^1\gamma_{\beta,\mu} \,ds$ is convergent. For notational clarity, we will assume that $\gamma_{\beta,\mu}$ converges to $\gamma_{\mu}$ vaguely on $[q,1)$ and $\int_q^1\gamma_{\beta,\mu} \,ds$ converges without going to a subsequence. Note that since $\gamma_{\beta,\mu}(s)\to\gamma_{\mu}(s)$ almost surely on $[q,1]$, by Fatou's lemma, $\int_q^1\gamma_{\mu}\, ds<\infty$, which means that $\gamma_{\mu}\in \nN.$ Furthermore, if we denote 
$$
\Delta:=\lim_{\beta\rightarrow\infty}\nu_{\beta,\mu}([q,1])-\int_q^1\!\gamma_{\mu}\, ds,
$$
and define $\nu$ by 
$$
\nu(A):=\int_A\!\gamma_{\mu}\,ds+\Delta \delta_1(A)
$$
then $\nu_{\beta,\mu}$ converges to $\nu$ vaguely on $[q,1].$ Indeed, for any continuous function $\phi$ on $[q,1]$ with $\sup_{s\in [q,1]}|\phi(s)|\leq 1,$ 
\begin{align*}
\Bigl|\int_{q}^1 \phi d(\nu_{\beta,\mu}-\nu)\Bigr|
&\leq \int_{q}^{q'} |\gamma_{\beta,\mu}-\gamma|\,ds +\Bigl|\int_{q'}^1 (\phi-1) d(\nu_{\beta,\mu}-\nu)\Bigr|+\Bigl|\int_{q'}^1 d(\nu_{\beta,\mu}-\nu)\Bigr|\\
&\leq \int_{q}^{q'} |\gamma_{\beta,\mu}-\gamma|\,ds +\sup_{s\in [q',1]}|\phi(s)-1|\int_{q'}^1(\gamma_{\beta,\mu}+\gamma) \,ds +\Bigl|\int_{q'}^1 d(\nu_{\beta,\mu}-\nu)\Bigr|
\end{align*}
and, passing to the limit,
\begin{align*}
\limsup_{q'\uparrow 1}\limsup_{\beta\rightarrow\infty}\Bigl|\int_{q}^1 \phi d(\nu_{\beta,\mu}-\nu)\Bigr|&\leq \limsup_{q'\uparrow 1}\limsup_{\beta\rightarrow\infty}\Bigl|\int_{q'}^1 d(\nu_{\beta,\mu}-\nu)\Bigr|=0,
\end{align*}
where the right-hand side vanishes because, for any $q'\in [q,1),$
\begin{align*} 
\int_{q'}^1 d(\nu_{\beta,\mu}-\nu)&=\int_{q}^1\gamma_{\beta,\mu} \,ds -\int_q^1\gamma_{\mu}\,ds -\Delta+\int_q^{q'}(\gamma_{\beta,\mu} -\gamma_{\mu})\,ds \rightarrow 0\,\,\mbox{as $\beta\to \infty.$}
\end{align*}

Next we prove that $\gamma_{\mu}$ is a minimizer to $\nTAP^\infty(\mu).$ From Proposition \ref{add:prop1}, $$
\lim_{\beta\to\infty}\sup_{a\in [-1,1]}\Bigl|\Lambda_{\gamma_{\beta,\mu}}^\infty(q,a)-\Lambda_{\gamma_{\mu}}^\infty(q,a)-\frac{\Delta\xi''(1)}{2}\Bigr|=\lim_{\beta\to\infty}\sup_{a\in [-1,1]}\Bigl|\Lambda_{\nu_{\beta,\mu}}^\infty(q,a)-\Lambda_{\nu}^\infty(q,a)\Bigr|=0.
$$
Also, note that from the vague convergence of $\nu_{\beta,\mu}$ to $\nu,$
$$
\int_q^1\xi''s \gamma_{\beta,\mu} \,ds=\int_q^1\xi'' s\,d\nu_{\beta,\mu}\to \int_q^1\xi'' s\,d\nu=\int_q^1\xi''s \gamma_{\mu} \,ds+\frac{\Delta\xi''(1)}{2}.
$$
Together these lead to
\begin{align}\label{zt:add:eq7}
\lim_{\beta\to \infty}\nTAP^\infty(\mu,\gamma_{\beta,\mu})&=\nTAP^\infty(\mu,\gamma_{\mu}).
\end{align}
Since, from \eqref{zero:eq1},
\begin{align}\label{zt:add:eq8}
\Bigl|\nTAP^\infty(\mu,\gamma_{\beta,\mu})-\frac{1}{\beta}\nTAP^\beta(\mu,\zeta_{\beta,\mu})\Bigr|\leq \frac{\log 2}{\beta}
\end{align}
and, from Lemma \ref{zero:lem1},
\begin{align}\label{zt:add:eq9}
\frac{1}{\beta}\nTAP^\beta(\mu)=\frac{1}{\beta}\nTAP^\beta(\mu,\zeta_{\beta,\mu})\rightarrow \nTAP^\infty(\mu),
\end{align}
we conclude that
$
\nTAP^\infty(\mu)\geq \nTAP^\infty(\mu,\gamma_{\mu}).
$
Hence, $\gamma_{\mu}$ is a minimizer to $\nTAP^\infty(\mu).$

Finally, we show that the minimizer to $\nTAP^\infty(\mu)$ is unique. To see this, we recall from Lemma 5 in \cite{CHL} that $\Theta_{\gamma}(q,x)$ is a strictly convex functional in $(\gamma,x)\in \nN\times\mathbb{R}$. This implies that for any $a\in (-1,1)$, $\Lambda^\infty_\gamma(q,a)$ is strictly convex in $\gamma$ and so is $\nTAP^\infty(\mu,\gamma).$ Hence, $\nTAP^\infty(\mu)$ has a unique minimizer, $\gamma_{\mu}.$
\qed

\begin{remark}\rm \label{rm1}
Recall the measures $\nu_{\beta,\mu}$ and $\nu$ in the above proof. From  (\ref{zero:lem9:eq2}), we see that
\begin{align*}
\nTAP^\infty(\mu)&=\lim_{\beta\to\infty}\int_q^1(\xi'(s)-\xi'(q))d\nu_{\beta,\mu}(s)=\int_q^1(\xi'(s)-\xi'(q))d\nu(s)\\
&=\int_q^1(\xi'(s)-\xi'(q))\gamma_0(s)\,ds +(\xi'(1)-\xi'(q))\Delta.
\end{align*}	
Moreover, we showed that $\gamma_{\beta,\mu}(s)=\beta\zeta_{\beta,\mu}(s)$ converges to $\gamma_\mu(s)$ almost surely on $[q,1)$ as $\beta\to\infty.$
\end{remark}

\section{Energy of TAP states}

In this section, we will prove Theorem \ref{zt:thm2}.

\begin{proof}[Proof of Theorem \ref{zt:thm2}]
	Let us denote 
	\begin{equation}
	f_N(\beta):=\sup_{m\in S_N(q)}f_m(\beta)
	=\sup_{m\in S_N(q)}\Bigl(\frac{\beta H_N(m)}{N}+\nTAP^\beta(\mu_m)\Bigr).
	\end{equation}
	Recall from Theorem \ref{thm:GenTAP} that for any $q$ in the support of the Parisi measure $\zeta_{\beta}^*$, the following limits exist almost surely (using Borell's inequality and the concentration of the free energy),
	\begin{align}\label{zt:ex:eq4}
	\PP(\beta):=\lim_{N\to\infty} f_N(\beta)=\lim_{N\to\infty}\bigl(\beta F_N(\beta)\bigr)
	=\beta \PP_\beta(\zeta_{\beta}^*)
	\end{align}
	and, by \cite[Remark 1]{ACdual}, $\PP(\beta)$ is differentiable with
	\begin{align}\label{zt:ex:eq5}
	\PP'(\beta)=\frac{d}{d\beta}\lim_{N\to\infty}\bigl(\beta F_N(\beta)\bigr)
	&=\beta\int_0^1\!\xi'(s)\zeta_{\beta}^*(s)\,ds.
	\end{align} 
	Since $\nTAP^\beta_{N,n}(m,\eps,\delta)$ is convex in $\beta$ and, by Theorem \ref{thm:TAPcorrection}, it converges to $\nTAP^\beta(\mu_m)$ uniformly in $m\in [-1,1]^N$, it follows that, for any $\mu\in M_*$, $\nTAP^\beta(\mu)$ is convex in $\beta>0$, which implies that $f_m(\beta)$ and $f_N(\beta)$ are convex in $\beta$. Since
	$$
	f_N(\beta\pm h)=\max_{m\in S_N(q)}f_m(\beta\pm h)\geq 
	\max_{m\in M_{\beta,q}(\eps_N)}f_m(\beta\pm h),
	$$
	for any $m\in M_{\beta,q}(\eps_N)$ and $h>0$, we can write
	\begin{align*}
	&\frac{f_N(\beta+h)-f_N(\beta)}{h}\geq \frac{f_m(\beta+h)-f_m(\beta)-\eps_N}{h}
	\geq f_m'(\beta)-\frac{\eps_N}{h},
	\\
	&\frac{f_N(\beta)-f_N(\beta-h)}{h}\leq \frac{f_m(\beta)-f_m(\beta-h)+\eps_N}{h}
	\leq f_m'(\beta)+\frac{\eps_N}{h},
	\end{align*}
	using convexity in the last inequality in each line, where the existence of $f_m'(\beta)$ is guaranteed by \eqref{zero:lem9:eq2}. Taking the supremum in the first line and infimum in the second line over $m\in M_{\beta,q}(\eps_N)$ and taking limits,
	\begin{align*}
	\frac{\PP(\beta)-\PP(\beta-h)}{h}
	&\leq
	\liminf_{N\to\infty}\inf_{M_{\beta,q}(\eps_N)} f_m'(\beta)
	\\
	&\leq
	\limsup_{N\to\infty} \sup_{M_{\beta,q}(\eps_N)} f_m'(\beta)
	\leq \frac{\PP(\beta+h)-\PP(\beta)}{h}.
	\end{align*}
	Letting $h\downarrow 0$ and using that $\PP$ is differentiable implies that
	$$
	\lim_{N\to\infty} \sup_{M_{\beta,q}(\eps_N)}\Bigl| f_m'(\beta)-\PP'(\beta)\Bigr| = 0.
	$$
	By \eqref{zero:lem9:eq2}, denoting as before $\zeta_{\beta,m}:=\zeta_{\beta,\mu_m}$, for any $m\in (-1,1)^N$,
	$$
	f_m'(\beta) = \frac{H_N(m)}{N}+\beta\int_q^1\! (\xi'(s)-\xi'(q))\zeta_{\beta,m}(s)\,ds.
	$$
	By continuity of $\TAP^\beta(\mu,\zeta)$ in both $\mu$ and $\zeta$ and uniqueness of the minimizer, the order parameter $\zeta_{\beta,m}$ is continuous in $m$, so the same formula holds  for all $m\in [-1,1]^N$. Together with (\ref{zt:ex:eq5}) this gives
	\begin{align}
	\label{zt:ex:eq3}
	\lim_{N\to\infty} \sup_{M_{\beta,q}(\eps_N)}\Bigl|
	\frac{H_N(m)}{N}+\beta\int_q^1\bigl(\xi'(s)-\xi'(q)\bigr)\zeta_{\beta,m}(s)\,ds-
	\beta\int_0^1\!\xi'(s)\zeta_{\beta}^*(s)\,ds
	\Bigr| = 0.
	\end{align}
	
	To finish the proof of (\ref{zt:lem:eq1}), it remains to show that
	\begin{align}\label{zt:ex:eq0}
	\lim_{N\to\infty}\sup_{m\in M_{\beta,q}(\eps_N)}\int_q^1\! |\zeta_{\beta,m}(s)-\zeta_{\beta}^*(s)|\,ds=0.
	\end{align}  
	Also, \eqref{zt:lem:eq2} will follow simply by using \eqref{zt:ex:eq4} and the equality in \eqref{zt:lem:eq3} is valid directly from integration by parts. Note that, for any $m_0\in M_{\beta,q}(\eps_N)$,
	\begin{align*}
	&
	f_N(\beta)-\eps_N
	\leq \frac{\beta H_N(m_0)}{N}+\nTAP^\beta(\mu_{m_0})
	\\
	&=\frac{\beta H_N(m_0)}{N}+\nTAP^\beta(\mu_{m_0},\zeta_{\beta}^*)+\nTAP^\beta(\mu_{m_0})-\nTAP^\beta(\mu_{m_0},\zeta_{\beta}^*)\\
	&\leq \sup_{m\in S_N(q)}\Bigl(\frac{\beta H_N(m)}{N}+\nTAP^\beta(\mu_m,\zeta_{\beta}^*)\Bigr)+\nTAP^\beta(\mu_{m_0})-\nTAP^\beta(\mu_{m_0},\zeta_{\beta}^*)
	\end{align*}
	and $\nTAP^\beta(\mu_{m_0})\leq \nTAP^\beta(\mu_{m_0},\zeta_{\beta}^*).$
	These imply that
	\begin{align*}
	&\sup_{m\in M_{\beta,q}(\eps_N)}\Bigl|\nTAP^\beta(\mu_{m},\zeta_{\beta}^*)-\nTAP^\beta(\mu_{m})\Bigr|\\
	&\leq \sup_{S_N(q)}\Bigl(\frac{\beta H_N(m)}{N}+\nTAP^\beta(\mu_m,\zeta_{\beta}^*)\Bigr)-f_N(\beta)+\eps_N\to 0,
	\end{align*}
	where the a.s. convergence follows from Theorems \ref{thm:GenTAP} and \ref{Thm1label} above, the concentration of the free energy, and the Borell inequality. Now, assume on the contrary that \eqref{zt:ex:eq0} is not true. From this and the above limit, we can choose $m^N\in M_{\beta,q}(\eps_N)$ so that (by passing to a subsequence if necessary) $\mu_{m^N}\to \mu_0$ and $\zeta_{\beta,m^N}\to\zeta_0$ for some $\mu_0\in M_*$ and $\zeta_0\in \mathcal{M}_q$,
	\begin{align}\label{zt:ex:eq1}
	\begin{split}
	&\int_q^1\! |\zeta_{0}(s)-\zeta_{\beta}^*(s)|\,ds=\lim_{N\to\infty} \int_q^1\! |\zeta_{\beta,m^N}(s)-\zeta_{\beta}^*(s)|\,ds>0,
	\end{split}
	\end{align}
	and, from the continuity of $\nTAP^\beta$ on $M_*\times\mathcal{M}_q$,
	\begin{align}
	&\Bigl|\nTAP^\beta(\mu_0,\zeta_{\beta}^*)-\nTAP^\beta(\mu_0,\zeta_{0})\Bigr|
	\nonumber
	\\
	&=\lim_{N\to\infty}\Bigl|\nTAP^\beta(\mu_{m^N},\zeta_{\beta}^*)-\nTAP^\beta(\mu_{m^N},\zeta_{\beta,m^N})\Bigr|
	\label{zt:ex:eq2}
	\\
	&=\lim_{N\to\infty}\Bigl|\nTAP^\beta(\mu_{m^N},\zeta_{\beta}^*)-\nTAP^\beta(\mu_{m^N})\Bigr|
	=0.
	\nonumber
	\end{align}
	The optimality of $\zeta_{\beta,m^N},$ 
	$$
	\nTAP^\beta(\mu_{m^N})=\nTAP^\beta(\mu_{m^N},\zeta_{\beta,m^N})\leq \nTAP^\beta(\mu_{m^N},\zeta),\,\,\,\forall \zeta\in \mathcal{M}_q,
	$$ 
	yields  that
	\begin{align*}
	\nTAP^\beta(\mu_0,\zeta_0)&=\lim_{N\to\infty}\nTAP^\beta(\mu_{m^N},\zeta_{\beta,m^N})\\
	&\leq \lim_{N\to\infty}\nTAP^\beta(\mu_{m^N},\zeta)=\nTAP^\beta(\mu_0,\zeta),\,\,\,\forall \zeta\in \mathcal{M}_q.
	\end{align*}
	This means that $\zeta_0$ is a minimizer of $\nTAP^\beta(\mu_0,\,\cdot\,)$. Recall that the minimizer is unique \cite[Theorem 10]{GenTAP}, so, by (\ref{zt:ex:eq2}), $\zeta_{\beta}^*=\zeta_0$ on $[q,1].$ This contradicts \eqref{zt:ex:eq1} and finishes the proof of \eqref{zt:ex:eq0}.
\end{proof}

\section{Energy of Ancestor Measure}

In this section, we will prove Theorem \ref{add:thm1}.

\begin{proof}[Proof of Theorem \ref{add:thm1}] 
	Recall \eqref{eq:dX}  and let $X(s)=X_{0,\zeta_\beta^*}^\beta(s)$ for $s\in [0,1].$ Denote $$
	u(s)=\partial_{x}\Phi_{\zeta_{\beta,\mu}}^\beta(s,X(s))\,\,\mbox{and}\,\,v(s)=\partial_{xx}\Phi_{\zeta_{\beta,\mu}}^\beta(s,X(s)).$$ Let $\mu$ be the distribution function of the random variable $u(q).$ Note that
	\begin{align*}
	\Lambda_{\zeta_\beta^*}^\beta(q,a)&=\Phi_{\zeta_\beta^*}^\beta(q,x(a))-ax(a),
	\end{align*}
	where $x(a)$ satisfies $\partial_x\Phi_{\zeta_\beta^*}^\beta(q,x(a))=a.$ Since $\partial_x\Phi_{\zeta_\beta^*}^\beta(q,\cdot)$ is strictly increasing, it follows that if $
	a=u(q),
	$
	then $
	x(a)=X(q)$
	and hence,
	\begin{align*}
	\int\Lambda_{\zeta_\beta^*}^\beta(q,a)d\mu(a)&=\e\Phi_{\zeta_\beta^*}^\beta(q,X(q))-\e X(q)u(q)\\
	&=\e\Phi_{\zeta_\beta^*}^\beta(q,X(q))-\frac{\beta^2}{2}\int_0^q\xi''(s)\zeta_\beta^*(s)\e u(s)^2ds\\
	&\quad-\e X(q)u(q)+\frac{\beta^2}{2}\int_0^q\xi''(s)\zeta_\beta^*(s)\e u(s)^2ds\\
	&=\Phi_{\zeta_\beta^*}^\beta(0,0)-\e X(q)u(q)+\frac{\beta^2}{2}\int_0^q\xi''(s)\zeta_\beta^*(s)\e u(s)^2ds.
	\end{align*}
	Here, the middle term can be computed through
	\begin{align*}
	\e X(q)u(q)
	&=\e\Bigl(\beta^2\int_0^q\xi''(s)\zeta_\beta^*(s) u(s)ds+\beta\int_0^q\sqrt{\xi''(s)}dW_s \Bigr)u(q)\\
	&=\beta^2\int_0^q \xi''(s) \zeta_\beta^*(s)\e u^2(s)ds+\beta^2\e \Bigl(\int_0^q\sqrt{\xi''}(s)dW_s \Bigr)\Bigl(\int_0^q \sqrt{\xi''(s)}v(s)dW_s \Bigr) \\
	&=\beta^2\int_0^q \xi''(s)\bigl(\e v(s)+ \zeta_\beta^*(s)\e u^2(s)\bigr)ds.
	\end{align*}
	To handle this equation, note that $d\e u(t)^2=\beta^2\xi''(t)\e v(t)^2dt$ and $v(1)=1-u(1)^2$. These and \eqref{int} imply that
	\begin{align*}
	1-\e u(1)^2-\e v(s)&=\e v(1)-\e v(s)=-\beta^2\int_s^1\xi''(t)\zeta_\beta^*(t)\e v(t)^2dt\\
	&=-\Bigl(\e u(1)^2-\e u(s)^2\zeta_\beta^*(s)-\int_s^1\e u(t)^2d\zeta_\beta^*(t)\Bigr),
	\end{align*}
	which together with \eqref{int} leads to
	\begin{align*}
	\e v(s)+\zeta_\beta^*(s)\e u(s)^2&=1-\int_s^1\e u(t)^2d\zeta_\beta^*(t)=1-\int_s^1 td\zeta_\beta^*(t)=s\zeta_\beta^*(s)+\int_s^1\zeta_\beta^*(t)dt.
	\end{align*}
	Since
	\begin{align*}
	\beta^{-1}E_\beta(q)=\int_0^q \xi''(s)\Bigl(\int_s^1\zeta_\beta^*(t)dt\Bigr)ds,
	\end{align*}
	it follows that
	\begin{align*}
	\e X(q)u(q)&=\beta^2\int_0^q\xi''(s)\Bigl(s\zeta_\beta^*(s)+\int_s^1\zeta_\beta^*(t)dt\Bigr)ds=\beta^2\int_0^q\xi''(s)s\zeta_\beta^*(s)ds+\beta E_\beta(q).
	\end{align*}
	Consequently,
	\begin{align*}
	\nTAP^\beta(\mu)&=\Phi_{\zeta_{\beta}^*}(0,0)-\beta^2\int_0^q\xi''(s)s\zeta_\beta^*(s)ds-\beta E_\beta(q)\\
	&\quad+\frac{\beta^2}{2}\int_0^q\xi''(s)\zeta_\beta^*(s)\e u(s)^2ds-\frac{\beta^2}{2}\int_q^1\xi''(s)s\zeta_{\beta}^*(s)ds\\
	&=\mathcal{P}_\beta(\zeta_\beta^*)-\beta E_\beta(q)+\frac{\beta^2}{2}\int_0^q\xi''(s)\zeta_\beta^*(s)(\e u(s)^2-s)ds.
	\end{align*}
	Note that by the minimality of $\zeta_\beta^*,$ for any $\zeta\in \mathcal{M}_{0,1}$,
	\begin{align*}
	\frac{d}{d\theta}\mathcal{P}_\beta\bigl((1-\theta)\zeta_\beta^*+\theta\zeta\bigr)\Big|_{\theta=0^+}&=\frac{\beta^2}{2}\int_0^1\xi''(s)(\zeta(s)-\zeta_\beta^*(s))(\e u(s)^2-s)ds\geq 0.
	\end{align*}
	If, for  $s\in [0,1]$, we take 
	$$\zeta(s)=2^{-1}\zeta_\beta^*(s)1_{[0,q)}(s)+\zeta_{\beta}^*(s)1_{[q,1]}(s),$$
	then this inequality implies that
	\begin{align*}
	\int_0^q\xi''(s)\zeta_\beta^*(s)(\e u(s)^2-s)ds\leq 0.
	\end{align*}
	Hence,
	\begin{align*}
	\nTAP^\beta(\mu)&\leq \mathcal{P}_\beta(\zeta_\beta^*)-\beta E_\beta(q).
	\end{align*}
	Finally, if $q$ is in the support of $\zeta_\beta^*$, then from \cite[Equation $(46)$]{CPTAP17}, 
	\begin{align*}
	\int_0^q\xi''(s)\zeta_\beta^*(s)(\e u(s)^2-s)ds=0,
	\end{align*}
	which gives 
	\begin{align*}
	\nTAP^\beta(\mu)&=\mathcal{P}_\beta(\zeta_\beta^*)-\beta E_\beta(q).
	\end{align*}
This finishes the proof.	
\end{proof}

\section{Gradient of $\nTAP^\infty$}

In this section we establish the proof of Theorem \ref{ThmGTElab}.
Recall that by Lemma~\ref{zero:lem1}, $\beta^{-1}\nTAP^\beta(\mu)$ converges to $\nTAP^\infty(\mu)$, uniformly in $\mu\in M_*$ as $\beta\rightarrow\infty.$ Let $N\geq 1$ be fixed. Let $B$ be any compact subset of $(-1,1)^N.$ For any $m\in (-1,1)^N,$ define
\begin{align*}
f(m)=-\frac{1}{N}\Bigl(\oPsi(q_m,m_i,\gamma_{m})+m_i\xi''(q_m)\int_{q_m}^1\gamma_{m} \,ds +m_i\xi''(q_m)\Delta(m)\Bigr)_{i\leq N},
\end{align*}
where $q_m:=\sum_{i=1}^Nm_i^2/N$
and
\begin{align*}
\Delta(m)&:= \frac{1}{\xi'(1)-\xi'(q_m)}\Bigl(\nTAP^\infty(\mu_m)-\int_{q_m}^1\!(\xi'(s)-\xi'(q_m))\gamma_m(s)\,ds\Bigr).
\end{align*}
In the following, we will verify that 
\begin{align}\label{uc}
\lim_{\beta\rightarrow\infty}\sup_{m\in B}\Bigl\|\frac{1}{\beta}\nabla\nTAP^{\beta}(\mu_m)-f(m)\Bigr\|_2=0.
\end{align} 
If this is valid, this means that the gradient of $\nTAP^\infty(\mu_m)$ exists for all $m\in (-1,1)^N$ and is equal to $f(m),$ which finishes our proof. We now establish the above limit by three steps.

\medskip
{\noindent \bf Step 1.} Let $\beta_n>0$ and $m_n\in B$ be two sequences with $\beta_n\to\infty$ and $m_n\to m_0\in B$ so that 
\begin{align*}
\lim_{n\rightarrow\infty}\Bigl\|\frac{1}{\beta_n}\nabla\nTAP^{\beta_n}(m_n)-f(m_n)\Bigr\|_2=\limsup_{\beta\rightarrow\infty}\sup_{m\in B}\Bigl\|\frac{1}{\beta}\nabla\nTAP^{\beta}(m)-f(m)\Bigr\|_2.
\end{align*}
If $\zeta_{\beta_n,m_n}$ is the minimizer in the definition of $\nTAP^{\beta_n}(\mu_{m_n})$, let us denote
\begin{equation}
\zeta_n:=\zeta_{\beta_n,m_n},\,\, \gamma_n := \beta_n \zeta_n=\beta_n \zeta_{\beta_n,m_n}.
\end{equation}
By the definition of $\bN_{q_{m_n},1}$ (see \eqref{eq5.1}), if we define 
a measure $\nu_n$ on $[0,1]$ by
$$
\nu_n(A)=\int_{A}\gamma_n(s)\,ds ,
$$ 
then from \eqref{zero:lem9:eq1}, it satisfies that
\begin{align}
\int_0^1\bigl(\xi'(s)-\xi'(q_{m_n})\bigr)d\nu_n(ds)\leq \sup_{\mu\in M_*}\nTAP^\infty(\mu).
\label{unicontro}
\end{align}
From this upper bound, we can pass to a subsequence along which $\nu_n$ converges to some $\nu_0\in \bN_{q_{m_0},1}$ vaguely on $[0,1]$, where 
$$
\nu_0(A)=\int_{A}\gamma_*(s)\,ds +\Delta_*\delta_1(A)
$$
for some $\gamma_*\in \mathcal{N}_{q_{m_0},1}$ and $\Delta_*\geq 0.$
For notational clarity, we will assume throughout the rest of the proof that these hold without passing to a subsequence of $\beta_n.$ We claim that 
\begin{align}
\label{zt:add:eq5}
(\gamma_*,\Delta_*)=(\gamma_{m_0},\Delta(m_0)),
\end{align} 
where we recall \eqref{eqDefDelta} and that $\gamma_m:=\gamma_{\mu_m}$ is the minimizer as in Theorem \ref{zero:thm4}.
Indeed, by the uniform convergence of $\beta^{-1}\nTAP^\beta(\mu)$ to $\nTAP^\infty(\mu)$ and continuity of $\nTAP^\infty$,
\begin{align}
\lim_{n\to\infty}\frac{1}{\beta_n}\nTAP^{\beta_n}(\mu_{m_n})
&=\nTAP^\infty(\mu_{m_0})=\nTAP^\infty(\mu_{m_0},\gamma_{m_0}).
\label{eqOtOH}
\end{align}
On the other hand, by (\ref{zero:eq1}),
\begin{align*}
&
\lim_{n\to\infty}\frac{1}{\beta_n}\nTAP^{\beta_n}(\mu_{m_n})
=
\lim_{n\to\infty}\frac{1}{\beta_n}\nTAP^{\beta_n}(\mu_{m_n},\zeta_n)
=
\lim_{n\to\infty}\nTAP^{\infty}(\mu_{m_n},\gamma_n).
\end{align*}
For $q\in [0,1)$ and $h\in\Reals^N$, set
\begin{equation}
\nTAP^\infty(m,\gamma,h):=
\frac{1}{N}\sum_{i=1}^{N}\bigl(\Theta_{\gamma}(q_m,h_i)-m_i h_i\bigr)
-\frac{1}{2}\int_{q_m}^{1}\!s\xi''(s)\gamma(s)\,ds
\label{eqTAPfirstagGEE}
\end{equation}
so that
\begin{equation}
\nTAP^{\infty}(\mu_{m_n},\gamma_n)= \inf_{h\in\Reals^N} \nTAP^\infty(m_n,\gamma_n,h).
\label{eqTAPvariZH}
\end{equation}
If $m_0\in (-1+\eta,1-\eta)^N$, then $m_n\in (-1+\eta,1-\eta)^N$ for large $n$. It is clear from the representation (\ref{eqReprPsiInf}) and the uniform control in (\ref{unicontro}) that the minimizer $h_n$ belongs to some cube $[-L,L]^N$, where $L$ depends  only on $\eta$ and the upper bound in (\ref{unicontro}). Let us choose further subsequence along which $h_n\to h_*$. Then, using Proposition \ref{add:prop1} exactly as in the argument leading to (\ref{zt:add:eq7}), we get
$$
\lim_{n\to\infty}\nTAP^{\infty}(\mu_{m_n},\gamma_n)=
\lim_{n\to\infty}\nTAP^\infty(m_n,\gamma_n,h_n)= \nTAP^\infty(m_0,\gamma_*,h_*).
$$
By (\ref{eqOtOH}), this also equals to
\begin{equation}
\nTAP^{\infty}(\mu_{m_0},\gamma_{m_0})= \nTAP^\infty(m_0,\gamma_{m_0},h_{m_0})
\end{equation}
for some $h_{m_0}\in [-L,L]^N$. By the strict convexity of the functional (\ref{eqTAPfirstagGEE}), we must have that $\gamma_*=\gamma_{m_0}$ and $h_*=h_{m_0}$.

Note that for any $m\in [-1+\eta,1-\eta]^N$, $\TAP^\beta(\mu_m,\zeta)$ is strictly convex in $\zeta\in\mathcal{M}_{q,1}$ and that $\TAP^\beta(\mu_m,\zeta)$ is continuous in $[-1+\eta,1-\eta]^N\times \mathcal{M}_{0,1}$. From these, we see that $\zeta_{\beta,\mu_m}$ is continuous in $(\beta,m).$ As a result, from Lemma \ref{zero:lem9}, $\frac{d}{d\beta}\TAP(\mu_m)$ is continuous on $m\in [-1+\eta,1-\eta]^N$ for all $\beta>0.$ Furthermore, this derivative is nondecreasing in $\beta$ and, as $\beta\to\infty,$ it converges to $\TAP^\infty(\mu_m)$, which is a continuous function. Hence, from Dini's theorem, $\frac{d}{d\beta}\TAP^\beta(\mu_m)$ converges to $\TAP^\infty(\mu_m)$ uniformly in $m\in [-1+\eta,1-\eta]^N.$ From this, Remark \ref{rm1}, and the definition of $\nu_0$, the limit $\int_{0}^1\! \bigl(\xi'(s)-\xi'(q_{m_n})\bigr)d\nu_n(s)$ can be written in two ways, 
\begin{align*}
&\int_{q_{m_0}}^1\bigl(\xi'(s)-\xi'(q_{m_0})\bigr)\gamma_{*}(s)\,ds+\bigl(\xi'(1)-\xi'(q_{m_0})\bigr)\Delta_*=\lim_{n\to\infty}\frac{d}{d\beta}\TAP^{\beta_n}(\mu_{m_n})\\
&=\TAP^\infty(\mu_{m_0})=\int_{q_{m_0}}^1\bigl(\xi'(s)-\xi'(q_{m_0})\bigr)\gamma_{m_0}(s)\,ds+\bigl(\xi'(1)-\xi'(q_{m_0})\bigr)\Delta(m_0).
\end{align*}
Since we showed that $\gamma_*=\gamma_{m_0},$ this implies that $\Delta_{*}=\Delta(m_0)$ and finishes the proof of (\ref{zt:add:eq5}).

\medskip
{\noindent \bf Step 2.} Next, we handle the limit of the gradient of $\beta^{-1}\nTAP^\beta(\mu_m)$. Recall that from Theorem \ref{ThmGTElabbeta},
\begin{align}\label{zt:add:eq2}
\frac{1}{\beta}\nabla \nTAP^\beta(\mu_m) = -\frac{1}{N}\Bigl(\frac{1}{\beta}\oPsi_\beta(q_m,m_i,\zeta_{\beta,m})+m_i\beta\xi''(q_m)\int_{q_m}^1\zeta_{\beta,m} \,ds \Bigr)_{i\leq N}.
\end{align}
Here, the second term on the right-hand side can be handled by using the fact that $m_{n}\to m_0$, the vague convergence of $\nu_n,$ and \eqref{zt:add:eq5}, to obtain that
\begin{align}
\label{zt:add:eq4}
\int_{q_{m_n}}^1\beta_n\zeta_n(s) \,ds &=\nu_n([0,1])\to\nu_0([0,1])=\int_{q_{m_0}}^1\gamma_{m_0}(s)\,ds +\Delta(m_0).
\end{align}
Next, we treat the first term on the right-hand side of \eqref{zt:add:eq2}. Recall that for any $\zeta\in \nN,$ $a\in [-1,1]$, and $\beta>0$, we have that
\begin{align*}
\frac{1}{\beta}\Lambda_{\zeta}^{\beta}(q,a)&=\inf_{x}\Bigl(\frac{1}{\beta}\Phi_{\zeta}^{\beta}(q,\beta x)-ax\Bigr).
\end{align*}
Denote by 
\begin{align*}
x_{n,i}=\frac{1}{\beta_n}\oPsi_{\beta_n}(q_{m_n},m_{n,i},\zeta_n),\,\,\forall 1\leq i\leq N.
\end{align*}
Let us again assume without loss of generality that the following limits exist on the extended real line,
$x_i:=\lim_{n\rightarrow\infty}x_{n,i}$ for all $1\leq i\leq N.$ Then from \eqref{zt:add:eq1}, \eqref{eqLambdaRel}, and Corollary \ref{add:cor1},
\begin{align*}
&\Theta_{\nu_0}(q_{m_0},x_{i})-m_{0,i}x_{i}
=\lim_{n\to\infty}\Theta_{\nu_{n}}(q_{m_n}, x_{n,i})-m_{n,i}x_{n,i}
=\lim_{n\to\infty}\frac{1}{\beta_n}\Lambda_{\zeta_n}^{\beta_n}(q_{m_n},m_{n,i})
\\
&=\lim_{n\to\infty}\Lambda_{\nu_n}^\infty(q_{m_n},m_{n,i})
=\Lambda_{\nu_0}^\infty(q_{m_0},m_{0,i})
=\Lambda_{\gamma_{m_0}}^\infty(q_{m_0},m_{0,i})+\frac{\xi''(1)\Delta_*}{2},
\end{align*}
which means that $x_i=\oPsi(q_{m_0},m_{0,i},\gamma_{m_0}).$ Combining this with \eqref{zt:add:eq5}, \eqref{zt:add:eq2}, and \eqref{zt:add:eq4}, we arrive at
\begin{align}\label{zt:add:eq10}
\lim_{n\to\infty}\frac{1}{\beta_n}\nabla \nTAP^{\beta_n}(\mu_{m_n})&=f(m_0).
\end{align}

\medskip
{\noindent \bf Step 3.} Finally, we show that $\lim_{n\to\infty}f(m_n)=f(m_0)$ in a similar manner as the first and second steps. Once this is verified, this and \eqref{zt:add:eq10} together imply the desired uniform convergence and hence finish our proof. Recall from Remark \ref{rm1} that for each $n$, if we define the measure $\nu_n'$ on $[0,1]$ by 
\begin{align*}
\nu_n'(A)=\int_{A}\gamma_{m_n}(s)\,ds +\Delta(m_n)\delta_1(A),
\end{align*}
then
\begin{align}
\begin{split}\label{zt:add:eq11}
\int_{0}^1\bigl(\xi'(s)-\xi'(q_{m_n})\bigr)d\nu_n'(s)&=\int_{q_{m_n}}^1\bigl(\xi'(s)-\xi'(q_{m_n})\bigr)\gamma_{m_n}(s)\,ds +\bigl(\xi'(1)-\xi'(q_{m_n})\bigr)\Delta(m_n)\\
&=\nTAP^\infty(\mu_{m_n}).
\end{split}
\end{align}
Note that $\nu_n'\in \bN_{q_{m_n,1}}.$ As in Step 1, we can assume without loss of generality that $\nu_n'$ vaguely converges to some $\nu_0'\in \bN_{q_{m_0},1}$ defined as $$
\nu_0'(A):=\int_{A}\gamma_{*}'(s)\,ds +\Delta_0'\delta_1(A)
$$ 
for some $\gamma_*'\in \mathcal{N}_{q_{m_0},1}$ and $\Delta_0'\geq 0.$  We claim that 
\begin{align}\label{zt:add:eq6}
(\gamma_*',\Delta_0')=(\gamma_{m_0},\Delta(m_0)).
\end{align} 
By the argument in Step 1 above,
$$
\nTAP^\infty(\mu_{m_0},\gamma_*')=\lim_{n\to\infty}\nTAP^\infty(\mu_{m_n},\gamma_{m_n})=\lim_{n\to\infty}\nTAP^\infty(\mu_{m_n})=\nTAP^\infty(\mu_{m_0}).
$$
Hence, the uniqueness of the minimizer forces $\gamma_*'=\gamma_{m_0}.$ On the other hand, the vague convergence of $\nu_n'$ to $\nu_0'$ and \eqref{zt:add:eq11} imply that
\begin{align*}
&\int_{q_{m_0}}\bigl(\xi'(s)-\xi'(q_{m_0})\bigr)\gamma_{m_0}(s)\,ds +\bigl(\xi'(1)-\xi'(q_{m_0})\bigr)\Delta_0'\\
&=\int_{0}^1\bigl(\xi'(s)-\xi'(q_{m_0})\bigr)\nu_0'(s)=\lim_{n\to\infty}\int_{0}^1\bigl(\xi'(s)-\xi'(q_{m_n})\bigr)d\nu_n'(s)=\nTAP^\infty(\mu_{m_0}),
\end{align*}
which means that $\Delta_0'=\Delta(m_0).$ These complete the proof of \eqref{zt:add:eq6}. Now, from \eqref{zt:add:eq6},
\begin{align*}
&\lim_{n\to\infty}m_{n,i}\xi''(q_{m_n})\int_{q_{m_n}}^1\gamma_{m_n} \,ds +m_{n,i}\xi''(q_{m_n})\Delta(m_n)
=\lim_{n\to\infty}m_{n,i}\xi''(q_{m_n})\nu_n'([0,1])
\\
&
=m_{0,i}\xi''(q_{m_0})\nu_0'([0,1])
=m_{0,i}\xi''(q_{m_0})\int_{q_{m_0}}^1\gamma_{m_0} \,ds  +m_{0,i}\xi''(q_{m_0})\Delta(m_0).
\end{align*}
Furthermore, following a similar argument as we handled the first term on the right-hand side of \eqref{zt:add:eq2} in the second step, it can also be obtained that 
\begin{align*}
\lim_{n\to\infty}\oPsi(q_{m_n},m_{n,i},\gamma_{m_n})=\oPsi(q_{m_0},m_{0,i},\gamma_{m_0}).
\end{align*}
This together with the above limit gives that $\lim_{n\to\infty}f(m_n)=f(m_0)$ and this completes our proof.
\qed

\bibliographystyle{plain}
\bibliography{bibliography}

\begin{thebibliography}{10}

\bibitem{AC15}
A.~Auffinger and W.-K. Chen.
\newblock The {P}arisi formula has a unique minimizer.
\newblock {\em Comm. Math. Phys.}, 335(3):1429--1444, 2015.

\bibitem{ACdual}
A.~{Auffinger} and W.-K. {Chen}.
\newblock {The Legendre structure of the Parisi formula}.
\newblock {\em Communications in Mathematical Physics}, 348(3):751--770, 2016.

\bibitem{AC17}
A.~{Auffinger} and W.-K. {Chen}.
\newblock {P}arisi formula for the ground state energy in the mixed $p$-spin
  model.
\newblock {\em Ann. Probab.}, 45(6B):4617--4631, 2017.

\bibitem{AJ16}
A.~{Auffinger} and A.~{Jagannath}.
\newblock {Thouless-Anderson-Palmer} equations for generic $p$-spin glasses.
\newblock {\em arXiv:1612.06359}, 2016.

\bibitem{AufJag18}
A.~{Auffinger} and A.~{Jagannath}.
\newblock {On spin distributions for generic $p$-spin models}.
\newblock {\em Journal of Statistical Physics}, page 413, November 2018.

\bibitem{BelKist}
D.~Belius and N.~Kistler.
\newblock The {TAP-Plefka} variational principle for the spherical {SK} model.
\newblock {\em arXiv:1802.05782}, 2018.

\bibitem{BSZ}
G.~Ben~Arous, E.~Subag, and O.~Zeitouni.
\newblock Geometry and temperature chaos in mixed spherical spin glasses at low
  temperature -- the perturbative regime.
\newblock {\em arXiv:1804.10573}, 2018.

\bibitem{EB}
E.~Bolthausen.
\newblock An iterative construction of solutions of the {TAP} equations for the
  {S}herrington-{K}irkpatrick model.
\newblock {\em Comm. Math. Phys.}, 325(1):333--366, 2014.

\bibitem{EB2}
E.~Bolthausen.
\newblock A {M}orita type proof of the replica-symmetric formula for {SK}.
\newblock {\em arXiv:1809.07972}, 2018.

\bibitem{chatt10}
S.~Chatterjee.
\newblock Spin glasses and {S}tein's method.
\newblock {\em Probab. Theory Related Fields}, 148(3-4):567--600, 2010.

\bibitem{C17}
W.-K. Chen.
\newblock Variational representations for the {P}arisi functional and the
  two-dimensional {G}uerra-{T}alagrand bound.
\newblock {\em Ann. Probab.}, 45(6A):3929--3966, 2017.

\bibitem{CHL}
W.-K. {Chen}, M.~{Handschy}, and {Lerman}. G.
\newblock {On the energy landscape of the mixed even p-spin model}.
\newblock {\em Probab. Theory Related Fields.}, 171(1-2):53--95, 2018.

\bibitem{CPTAP17}
W.-K. Chen and D.~Panchenko.
\newblock On the {TAP} free energy in the mixed {$p$}-spin models.
\newblock {\em Comm. Math. Phys.}, 362(1):219--252, 2018.

\bibitem{GenTAP}
W.-K. {Chen}, D.~{Panchenko}, and E.~{Subag}.
\newblock {The generalized TAP free energy}.
\newblock {\em arXiv:1812.05066}, 2018.

\bibitem{Guerra}
F.~Guerra.
\newblock Broken replica symmetry bounds in the mean field spin glass model.
\newblock {\em Comm. Math. Phys.}, 233(1):1--12, 2003.

\bibitem{JT16}
A.~Jagannath and I.~Tobasco.
\newblock A dynamic programming approach to the {P}arisi functional.
\newblock {\em Proc. Amer. Math. Soc.}, 144(7):3135--3150, 2016.

\bibitem{M1}
M.~M\'ezard, G.~Parisi, N.~Sourlas, G.~Toulouse, and M.A. Virasoro.
\newblock On the nature of the spin-glass phase.
\newblock {\em Phys. Rev. Lett.}, 52:1156, 1984.

\bibitem{M2}
M.~M\'ezard, G.~Parisi, N.~Sourlas, G.~Toulouse, and M.A. Virasoro.
\newblock Replica symmetry breaking and the nature of the spin-glass phase.
\newblock {\em J. de Physique}, 45:843, 1984.

\bibitem{MPV}
M.~M\'ezard, G.~Parisi, and M.~A. Virasoro.
\newblock {\em Spin glass theory and beyond}, volume~9 of {\em World Scientific
  Lecture Notes in Physics}.
\newblock World Scientific Publishing Co., Inc., Teaneck, NJ, 1987.

\bibitem{M3}
M.~M\'ezard and M.A. Virasoro.
\newblock The microstructure of ultrametricity.
\newblock {\em J. de Physique}, 46:1293--1307, 1985.

\bibitem{ultrametricity}
D.~Panchenko.
\newblock The {P}arisi ultrametricity conjecture.
\newblock {\em Ann. of Math. (2)}, 177(1):383--393, 2013.

\bibitem{SKmodel}
D.~Panchenko.
\newblock {\em The {S}herrington-{K}irkpatrick model}.
\newblock Springer Monographs in Mathematics. Springer, 2013.

\bibitem{Pspins}
D.~{Panchenko}.
\newblock {Spin glass models from the point of view of spin distributions}.
\newblock {\em Ann. of Probab.}, 41(3A):1315--1361, 2013.

\bibitem{Pan00}
D.~Panchenko.
\newblock The {P}arisi formula for mixed {$p$}-spin models.
\newblock {\em Ann. Probab.}, 42(3):946--958, 2014.

\bibitem{Parisi79}
G.~Parisi.
\newblock Infinite number of order parameters for spin-glasses.
\newblock {\em Phys. Rev. Lett.}, 43:1754--1756, 1979.

\bibitem{Parisi80}
G.~Parisi.
\newblock A sequence of approximate solutions to the {S}-{K} model for spin
  glasses.
\newblock {\em J. Phys. A}, 13:L--115, 1980.

\bibitem{Subag17}
E.~Subag.
\newblock The geometry of the {G}ibbs measure of pure spherical spin glasses.
\newblock {\em Invent. Math.}, 210(1):135--209, 2017.

\bibitem{SubagFEL}
E.~Subag.
\newblock Free energy landscapes in spherical spin glasses.
\newblock {\em arXiv:1804.10576}, 2018.

\bibitem{Tal03}
M.~Talagrand.
\newblock The {P}arisi formula.
\newblock {\em Ann. of Math. (2)}, 163(1):221--263, 2006.

\bibitem{Talbook1}
M.~Talagrand.
\newblock {\em Mean field models for spin glasses. {V}olume {I}}, volume~54 of
  {\em Ergebnisse der Mathematik und ihrer Grenzgebiete. 3. Folge. A Series of
  Modern Surveys in Mathematics}.
\newblock Springer-Verlag, Berlin, 2011.

\bibitem{TAP}
D.~J. Thouless, P.~W. Anderson, and R.~G. Palmer.
\newblock Solution of `solvable model of a spin glass'.
\newblock {\em Physical Magazine}, 35(3):593--601, 1977.

\end{thebibliography}

\end{document}